	\DeclareMathOperator*{\argmin}{arg\,min}
	\renewenvironment{proof}{{\emph{Proof. }}}{}
	\newtheorem{mypro}{Proposition}
	\newtheorem{myremark}{Remark}
	\newtheorem{myclaim}{Claim}
	\newtheorem{mylemma}{Lemma}
	\newtheorem{myobservation}{Observation}
	\def\rev#1{{{#1}}}
	\renewcommand\S{Sect. }
\begin{document}
	\bibliographystyle{spbasic}

	\title{Combinatorial separation algorithms for the continuous knapsack polyhedra with divisible capacities \thanks{This work was supported by the Chinese Natural Science Foundation
			(No. 11631013) and the National 973 Program of China (No. 2015CB856002).}
	}
	\titlerunning{Separation algorithms for divisible knapsack polyhedra}
	\author{Wei-Kun Chen \and Yu-Hong Dai}
	
	\institute{
	LSEC, ICMSEC, AMSS, Chinese Academy of Sciences, Beijing, China \and School of Mathematical Sciences, University of Chinese Academy of Sciences, Beijing, China. {\sl Emails}: cwk@lsec.cc.ac.cn; dyh@lsec.cc.ac.cn (corresponding author)}
	

	\date{\today}

	\maketitle
	
	\begin{abstract}
  It is important to design separation algorithms of low computational complexity in mixed integer programming. We study the separation problems of the two continuous knapsack polyhedra with divisible capacities. The two polyhedra are the convex hulls of the sets which consist of $ n $ nonnegative integer variables, one unbounded continuous, $ m $ bounded continuous variables, and one linear constraint in either $ \geq  $ or $ \leq $ form where the coefficients of integer variables are integer and divisible. Wolsey and Yaman (Math Program 156: 1--20, 2016) have shown that the polyhedra can be described by adding the two closely related families of partition inequalities. However, no combinatorial separation algorithm is known yet for the polyhedra. In this paper, for each polyhedron, we provide a combinatorial separation algorithm with the complexity of $ \mathcal{O}(nm+m\log m ) $. In particular, we solve the separation problem for the continuous $ \geq $-knapsack polyhedron by presenting a polynomial-time separation algorithm for the family of $ \geq  $-partition inequalities. For the continuous $ \leq $-knapsack polyhedron, we derive the complemented partition inequalities, which dominate the $ \leq $-partition inequalities. The continuous $ \leq $-knapsack polyhedron is completely described with the addition of the proposed inequalities. We again show that the family of the proposed inequalities can be separated in polynomial time.
		\keywords{Continuous knapsack set \and Separation problem \and Divisibility \and Complexity }
		\subclass{90C11\and90C27}
	\end{abstract}

	\section{Introduction}
	
Given a polyhedron $P$ and a point $ \bar{x} $, the {\emph{separation problem}} aims to find a hyperplane to separate $ P $ and $ \bar{x} $ or prove that no such one exists. In modern mixed integer programming solvers, several separation problems are solved at each iteration of the cutting plane phase. Hence, it is important to design separation algorithms that should be of low computational complexity.
	
	
	
	In this paper, we are concentrated on the separation problem of the two continuous knapsack polyhedra with divisible capacities.
	The two polyhedra are convex hulls of the two continuous knapsack sets which consist of $ n $ nonnegative integer variables, one unbounded continuous, $ m $ bounded continuous variables, and one linear constraint in either $ \geq  $ or $ \leq $ form, where the coefficient of integer variables are integer and divisible. Mathematically, letting $N = \{1,\ldots, n\} $ and $ M = \{1,\ldots,m\} $, the  continuous $ \geq $-knapsack set under consideration can be written as
	\begin{equation*}
	X(b) = \bigg\{ (x, s) \in \mathbb{Z}^{n}_+ \times \mathbb{R}^{m+1}_+ \, : \, s_0  +
	\sum_{j\in M} s_j+ \sum_{i\in N} a_i x_i \geq b, \ 0 \leq s_j \leq u_j, j\in M \bigg\}
	\end{equation*}
	and the  continuous $ \leq $-knapsack set can be written as 
	\begin{equation*}
	Y(b) = \bigg\{ (x, y) \in \mathbb{Z}^{n}_+ \times \mathbb{R}^{m+1}_+ \, : \,
		 \sum_{i\in N} a_i x_i \leq b + y_0 + \sum_{j \in M}y_j, \ 0 \leq y_j \leq u_j, j\in M \bigg\},
	\end{equation*}
	where $ a_i $ for $ i \in N $, $ u_j $ for $ j \in M$, and $ b $ are
	positive integers and  $ 1 | a_1 | \cdots | a_n $, {i.e.}, $ a_{i+1} $ is an integer multiple of $ a_i $. The weights $a_1$, $\ldots, a_n$ are distinct
and $ a_1 \geq 2 $. The two sets arise as the
	relaxation of the feasible set in many mixed integer linear programming problems, see {e.g.} \cite{Magnanti1993, Pochet1995,Gunluk1999,Bienstock1996,Chopra1998} for some of them. This motivates the research community to study the associated polyhedral structures.

	Several works have been conducted to study the related $ \geq $-knapsack set $ X'(b)= X(b) \cap \{ (x,s)\, : \, s_0 =0 \}$. \citet{Magnanti1993} first studied the case that $ n=1 $ and $ b = \sum_{j \in M} u_j $, which is corresponding to \emph{single facility splittable flow arc set}. In this case, they showed that the convex hull of the single facility splittable arc flow set is
completely described by the so-called \emph{residual capacity inequalities} and the initial constraints.
	\citet{Magnanti1995} studied the \emph{two facility splittable flow arc set} when $ n=2 $. They claimed without proof that the residual capacity inequalities
	and the initial constraints suffice to describe the convex hull as well. \citet{Pochet1995} considered the integer $ \geq $-knapsack set when $ m=0 $.
	They derived the {\it partition inequalities} and proved that these inequalities suffice to define the integer $ \geq $-knapsack polyhedron together with the nonnegative constraints. Furthermore, they gave a linear-time separation algorithm for the partition inequalities. \citet{Kianfar2012} showed that the partition inequalities can be obtained by
	applying the {\emph{mixed integer rounding}} \cite{Nemhauser1990,Marchand2001} procedure recursively.
	
	For the integer $ \leq $-knapsack set $  \big \{ x \in \mathbb{Z}_+^{n} \, : \, \sum_{i \in N}a_i x_i \leq b  \big \} $, \citet{Marcotte1985} provided a complete description for its convex hull. See  \citet{Pochet1998} for a generalization with bounded integer variables.

	Recently, \citet{Wolsey2016} considered the general $ X(b) $ and $ Y(b) $
	with arbitrary numbers of continuous variables and integer variables. They generalized the results of \cite{Pochet1995} and gave two closely related families of partition inequalities, i.e., $ \geq $- and $ \leq  $-partition inequalities. Furthermore, they proved that the convex hulls of $ X(b) $ and $ Y(b) $ are completely described with the addition of these $ \leq $-partition and $ \geq  $-partition inequalities, respectively.
	Let $ X_L(b) $ and $ Y_L(b) $ be the linear relaxation of $ X(b) $ and $ Y(b) $ obtained by dropping the integrality restriction on the variables $ x $, respectively.
	Their result implies that for each continuous knapsack polyhedron, the associated separation problem can be reduced to, given a point $ (\bar{x},\bar{s}) \in X_L(b) (Y_L(b)) $, either finding a violated partition inequality or proving no such one exists.
  Since optimizing a linear function over the two considered knapsack sets can be done in polynomial time \cite{Wolsey2016}, and since the corresponding optimization and separation problems for a polyhedron are polynomially equivalent \cite{Grotschel1981}, it is natural to hope that there exists an efficient combinatorial separation algorithm for each family of partition inequalities. However, no polynomial-time combinatorial separation algorithms was known yet for any of the two families of partition inequalities. For the family of $ \geq $-partition inequalities, this unsolved problem was explicitly mentioned in \cite{Wolsey2016}.
	\begin{quote}
		``Though polynomial time combinatorial separation algorithms are known
		both for the partition inequalities for the integer $ \geq $-knapsack
		set and the residual capacity inequalities for the single facility
		splittable flow arc set (see Pochet and Wolsey \cite{Pochet1995} and
		Atamt\"{u}rk and Rajan \cite{Atamturk2002c}, respectively), we do not know
		such an efficient combinatorial algorithm to separate the exponential
		family of partition inequalities."
	\end{quote}

For the above two separation problems of $ \text{conv}(X(b)) $ and $ \text{conv}(Y(b)) $, this paper shall provide
two polynomial-time combinatorial algorithms with the complexity of $ \mathcal{O}(mn + m\log m)$. In particular, we show that the problem of separating the family of $ \geq $-partition inequalities for $ X(b) $ can be reduced to $ m+1 $ problems
	each of which is to separate the partition inequalities for the associated integer $ \geq $-knapsack set. These problems
can be solved in polynomial time by applying the algorithm in \citet{Pochet1995}. For $ \text{conv}(Y(b)) $, we give the {complemented partition inequality} derived by the complemented continuous $ \geq $-knapsack set which is isomorphic to the continuous $ \leq $-knapsack set. The complemented partition inequality dominates the $ \leq $-partition inequality in \cite{Wolsey2016}. We show that the family of {complemented partition inequalities} and the initial constraints suffice to describe $\text{conv}(Y(b))$ and its separation problem can be solved in polynomial time.
	
	For the continuous $ \geq $-knapsack polyhedron, the above result generalizes a result of \citet{Atamturk2002c} for the separation problem of $ \text{conv}(X(b)) $ with $ n=1 $, where they presented a polynomial-time separation algorithm for the residual capacity inequalities.
	
	This paper is organized as follows. \S \ref{section:reviewpartitionineq} reviews the partition inequalities for the integer knapsack and gives a separation algorithm which is modified from \cite{Pochet1995}. \S \ref{section:separation} presents a polynomial-time
	algorithm for separating the family of $ \geq $-partition inequalities. \S \ref{lessknapsackset} aims at finding a combinatorial separation algorithm for the $ \leq $-continuous knapsack polyhedron. Finally, \S \ref{section:conclusions} gives the conclusions and some possible future works.
	
	Throughout this paper, we denote $ w(C)  = \sum_{i \in C} w_i$ for a vector $ w\in \mathbb{R}^n $ and a subset $ C\subseteq N $ unless specified. For  convenience, we define $ a_0 =1 $ and $ a_{n+1} = + \infty $. Also denote $ N_0 = N \cup \{0\} $, i.e., $ N_0 = \{0,1\ldots, n\} $. Given two integers $ b $ and $ c $,  $ c \nmid b $ if and only if $ b $ is not a multiple of $ c $.

	\section{An improved separation algorithm for the partition inequalites of the integer $\ge$-knapsack set}
	\label{section:reviewpartitionineq}
	In this section, we consider the partition inequalities for the integer $ \geq $-knapsack set with divisible capacities
	\begin{equation*}
	Z(b) = \bigg \{x \in \mathbb{Z}^{n+1}_+ \, : \, x_0 + \sum_{i\in N} a_i x_i  \geq b\bigg \},
	\end{equation*}
	where $ 1 | a_1 \cdots | a_n $. Similarly, denote $ Z_L(b) $ be the linear relaxation of $ Z(b) $.
	We first review the partition inequalities derived by \citet{Pochet1995}, which is shown to described $ \text{conv}(Z(b)) $
    together with the initial constraints, and the separation algorithm presented in \cite{Pochet1995}. Then an improved separation algorithm is provided.
			
	To begin with, we introduce the partition inequalities for $ Z(b) $.
	Let
	\begin{equation}
	\label{partition}
	\begin{array}{ll}
	\Pi= \big \{ & \{i_1=0,i_{1} +1 ,\ldots,j_1\}, \\
	&\{i_2=j_1+1,i_2+1,\ldots,j_2\}, \\
	&\qquad \vdots\\
	&\{i_p = j_{p-1}+1,i_p+1,\ldots,j_p = n \} \ \ \big\},
	\end{array}
	\end{equation}
	be a partition of $ N_0 $, where $ p \in \mathbb{Z}_{++} $ and $ a_{i_p} \leq b  $. Define $ \beta_{p} =b$ and
	\begin{equation}
	\label{paradef}
	\kappa_t = \bigg \lceil \frac{\beta_t}{a_{i_t}} \bigg \rceil, \ \mu_t = (\kappa_t -1)a_{i_t}, \ \beta_{t-1} = \beta_t - \mu_t, \ \text{for} \   t=p, \ldots,1.
	\end{equation}
	Notice that $ a_{i_1} = a_0 = 1 $.
	The partition inequality for $ Z(b) $ given by \cite{Pochet1995} is
	\begin{equation}
	\label{intpartitionineq}
	x_0 + \sum_{i=1}^{j_1}\min\{a_i, \kappa_1\}x_i +
	\sum_{t=2}^p \prod_{l=1}^{t-1}\kappa_l \sum_{i=i_t}^{j_t} \min\bigg \{ \frac{a_i}{a_{i_t}}, \kappa_t \bigg \} x_i \geq \prod_{t=1}^{p}\kappa_t,
	\end{equation}
	which is valid for $ Z(b) $. For convenience, for some specific partition $ \Pi $, we \rev{refer} the corresponding inequality \eqref{intpartitionineq} as the $ \Pi $-partition inequality. In particular, when the partition is fixed as $ \{ N_0\} $, \eqref{intpartitionineq} reduces to
	\begin{equation}\label{npartition}
	x_0 + \sum_{i=1}^r a_i x_i + b \sum_{i=r+1}^n x_i \geq b,
	\end{equation}
	where $ r \in N_0 $ satisfying $ a_r \leq b < a_{r+1} $. Notice that $ r $ can be equal to $ n $ and $ a_{n+1} = + \infty $. For simplicity, we just call \eqref{npartition} the $ N_0 $-partition inequality.
	
	\citet{Pochet1995} gave a decomposition approach to separate the family of partition inequalities \eqref{intpartitionineq}. To describe it, given a point $ \bar{x} \in  Z_L(b) $, let
	\begin{equation}
	\label{intparsepaineq}
	\delta = \sum_{i=r+1}^n \bar{x}_i,  \  \kappa = \bigg \lfloor \frac{b}{a_{r}} \bigg \rfloor +1, \ \text{and} \  \omega = (\kappa -1)a_{r}.
	\end{equation}
	In the case that $ a_r | b $, $ \delta \geq 1 $, or $ \bar{x}_0+ \sum_{i=1}^{r}a_i \bar{x}_i  < \omega(1-\delta)  $, Pochet and Wolsey \cite{Pochet1995} established the following result.
	\begin{theorem}
		\label{trivialresult}
		Given a point $ \bar{x} \in  Z_L(b)$, let $ \delta $,  $ \kappa $, and $  \omega $ defined as in \eqref{intparsepaineq}.
		\begin{itemize}
			\item [{\rm(i)}] If $ a_{r} | b $,  then $\bar{x} \in \text{conv}(Z(b))$ if and only if the $ N_0 $-partition inequality \eqref{npartition} is satisfied by $ \bar{x} $;
			\item [{\rm(ii)}] If $ \delta \geq 1 $, then $ \bar{x} \in \text{conv}(Z(b)) $;
			\item [{\rm(iii)}] If $ \bar{x}_0 + \sum_{i=1}^{r}a_i \bar{x}_i  < \omega(1-\delta)  $, then the $ N_0 $-partition inequality \eqref{npartition} is a most violated inequality.
		\end{itemize}
	\end{theorem}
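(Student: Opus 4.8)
The plan is to lean on the complete description recalled above from \cite{Pochet1995}: a point $\bar{x}\in Z_L(b)$ lies in $\text{conv}(Z(b))$ if and only if it satisfies every $\Pi$-partition inequality \eqref{intpartitionineq}. Two facts about the coefficients of \eqref{intpartitionineq} will be used repeatedly. Since $a_{i_p}\le b<a_{r+1}$ forces $i_p\le r$, every index $i>r$ lies in the last block; there $a_i/a_{i_p}$ is a positive integer exceeding $b/a_{i_p}$, hence at least $\lceil b/a_{i_p}\rceil=\kappa_p$, so $\min\{a_i/a_{i_p},\kappa_p\}=\kappa_p$ and the coefficient of such an $x_i$ equals the right-hand side $\prod_{t=1}^{p}\kappa_t$. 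I will also use $\prod_{l=1}^{t}\kappa_l\le\beta_t$ (in particular $\prod_{t=1}^{p}\kappa_t\le b$), proved by induction from \eqref{paradef}: the base case gives $\kappa_1=\beta_1$, and the step combines $\beta_t=\beta_{t-1}+(\kappa_t-1)a_{i_t}$ with $\beta_{t-1}\le a_{i_t}$.

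Part (ii) is then immediate. Each $i>r$ has coefficient $\prod_{t=1}^{p}\kappa_t$ in every partition inequality, all remaining coefficients are nonnegative, and $\bar{x}\ge 0$; hence the left-hand side of any \eqref{intpartitionineq} is at least $\prod_{t=1}^{p}\kappa_t\sum_{i>r}\bar{x}_i=\prod_{t=1}^{p}\kappa_t\,\delta\ge\prod_{t=1}^{p}\kappa_t$ whenever $\delta\ge 1$. So $\bar{x}$ satisfies all partition inequalities and $\bar{x}\in\text{conv}(Z(b))$.

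For part (i), validity of \eqref{npartition} gives one direction, and for the converse I would show that when $a_r\mid b$ the polyhedron $P$ cut out by \eqref{npartition} and $x\ge 0$ is integral and contained in $\text{conv}(Z(b))$. Its coefficient vector $c=(1,a_1,\dots,a_r,b,\dots,b)$ is nonnegative, so $\text{rec}(P)=\mathbb{R}^{n+1}_{+}$, which is also a recession cone of $\text{conv}(Z(b))$; as $P$ is pointed it equals the convex hull of its vertices plus $\text{rec}(P)$. Since the origin violates \eqref{npartition}, each vertex makes \eqref{npartition} tight and has $n$ further coordinates at zero, so a single coordinate $x_k$ is pinned at $x_k=b/c_k$; because $a_k\mid a_r\mid b$ for $k\le r$ and $c_k\in\{1,a_k,b\}$, every $c_k$ divides $b$ and each vertex is integral and lies in $Z(b)$. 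Hence $P\subseteq\text{conv}(Z(b))$, and with validity $P=\text{conv}(Z(b))$, which is the claimed equivalence.

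Part (iii) carries the real work. Splitting off the indices $i>r$ (coefficient $\prod_{t=1}^{p}\kappa_t$), the violation of a $\Pi$-partition inequality is $v^{\Pi}=\prod_{t=1}^{p}\kappa_t\,(1-\delta)-L^{\Pi}$ with $L^{\Pi}=\sum_{i\le r}(\text{coefficient of }x_i)\,\bar{x}_i$, and for \eqref{npartition} this is $v^{N_0}=b(1-\delta)-L^{N_0}$ with $L^{N_0}=\bar{x}_0+\sum_{i=1}^{r}a_i\bar{x}_i$. Thus $v^{\Pi}\le v^{N_0}$ amounts to $\sum_{i=1}^{r}(a_i-c_i^{\Pi})\bar{x}_i\le\big(b-\prod_{t=1}^{p}\kappa_t\big)(1-\delta)$, where $c_i^{\Pi}$ denotes the coefficient of $x_i$ in \eqref{intpartitionineq}. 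The hypothesis gives $\sum_{i=1}^{r}a_i\bar{x}_i\le L^{N_0}<\omega(1-\delta)$ (which also forces $\delta<1$ and shows \eqref{npartition} is violated), so it suffices to establish the per-coordinate bound $(a_i-c_i^{\Pi})/a_i\le\big(b-\prod_{t=1}^{p}\kappa_t\big)/\omega$ for every $i\le r$ and then sum it against the weights $a_i\bar{x}_i$. This coefficient inequality is the \emph{main obstacle}. I expect to prove it by casing on the minimum defining $c_i^{\Pi}$: when it equals $a_i/a_{i_t}$ the inequality follows from $\prod_{t=1}^{p}\kappa_t\le b$ and $\prod_{l=1}^{t-1}\kappa_l\le\beta_{t-1}\le a_{i_t}$, while when it equals $\kappa_t$ it reduces to $a_i\prod_{l>t}\kappa_l\le\omega$, which holds trivially in the last block and, for earlier blocks, follows from $a_i\le a_{i_p}/2\le a_r/2$ together with a suffix bound on $\prod_{l>t}\kappa_l$ supplied by divisibility.
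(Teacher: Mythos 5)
A preliminary remark: the paper itself gives no proof of this theorem --- it is imported wholesale from Pochet and Wolsey \cite{Pochet1995} as a known result --- so your attempt has to stand on its own merits. Your parts (i) and (ii) do stand. The two coefficient facts you establish are correct (every $i>r$ lies in the last block and, since $a_{i_p}\,|\,a_i$ and $a_i>b$, receives coefficient exactly $\prod_{t=1}^{p}\kappa_t$; and $\prod_{l=1}^{t}\kappa_l\le\beta_t$ by the stated induction), part (ii) then follows immediately from the complete description of $\text{conv}(Z(b))$, and your vertex/recession-cone argument for part (i) is a valid, complete proof of the nontrivial direction: every vertex of the polyhedron cut out by \eqref{npartition} and $x\ge 0$ has a single nonzero coordinate equal to $b/c_k$ with $c_k\in\{1,a_k,b\}$, and $a_k\,|\,a_r\,|\,b$ makes each such vertex an integral point of $Z(b)$.

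Part (iii), however, has a genuine gap, and you flag it yourself. The reduction of ``most violated'' to the per-coordinate bound $(a_i-c_i^{\Pi})/a_i\le\bigl(b-\prod_{t=1}^{p}\kappa_t\bigr)/\omega$ for all $i\le r$ is legitimate (and that bound is in fact true), but you never prove it, and the justifications you sketch do not suffice. When $c_i^{\Pi}=\prod_{l<t}\kappa_l\,a_i/a_{i_t}$, the bound is equivalent to $\prod_{l<t}\kappa_l\bigl(a_{i_t}\prod_{l\ge t}\kappa_l-\omega\bigr)\le a_{i_t}(b-\omega)$; the two facts you cite ($\prod_{t}\kappa_t\le b$ and $\prod_{l<t}\kappa_l\le\beta_{t-1}\le a_{i_t}$) only say each relevant ratio is at most $1$ and do not relate the two sides, and they are genuinely insufficient whenever $a_{i_t}\prod_{l\ge t}\kappa_l>b$, which does occur: for $a=(2,4,8)$, $b=15$ and the partition $\{\{0,1\},\{2\},\{3\}\}$ one gets $(\kappa_1,\kappa_2,\kappa_3)=(3,2,2)$, so $a_{i_2}\kappa_2\kappa_3=16>b$. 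When $c_i^{\Pi}=\prod_{l\le t}\kappa_l$, your reduction to $a_i\prod_{l>t}\kappa_l\le\omega$ is sound and is indeed trivial for $t=p$, but for $t<p$ the ``suffix bound supplied by divisibility'' is exactly the missing content, not a routine check. What actually closes both cases is the chain inequality $a_{i_t}\kappa_t\le a_{i_{t+1}}$ (from $\beta_t\le a_{i_{t+1}}$ and $a_{i_t}\,|\,a_{i_{t+1}}$), iterated to give $a_{i_t}\prod_{l\ge t}\kappa_l\le a_{i_p}\kappa_p$, combined with $a_{i_p}\kappa_p\le\omega+a_r\le 2\omega$ and a separate base case at $t=p$, the latter two resting on $b\bmod a_{i_p}\le b\bmod a_r$, i.e.\ on the divisibility $a_{i_p}\,|\,a_r$; with these one can run a downward induction on $t$. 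None of this machinery appears in your text, so part (iii) --- which, as you say, carries the real work --- remains a plan rather than a proof.
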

	\noindent The remaining case is then the following,
	\begin{equation}
	\label{nontrivialcase}
	a_{r} \nmid b , \  \delta < 1, 	\ \text{and} \ \bar{x}_0 + \sum_{i=1}^{r}a_i \bar{x}_i \geq \omega(1-\delta) .
	\end{equation}
	In this case, let $ v \leq r $ be the index such that
	\begin{equation}
	\label{vcond}
	 \sum_{i=v+1}^r a_i\bar{x}_i < \omega(1-\delta) \ \text{and} \  \sum_{i=v}^r a_i\bar{x}_i \geq \omega(1-\delta).
	\end{equation}
	Notice that $ a_0 = 1 $ and $ v $ can be $ 0 $.
	Define the vectors $ \alpha $ and $ \gamma $ as
	\begin{equation}
	\label{alphagammadef}
	\alpha_i = \left\{
	\begin{array}{ll}
	0,  &   i<v;\\
	\frac{\omega (1-\delta)-\sum_{i=v+1}^{r}a_i \bar{x}_i}{a_v}, & i=v;\\
	\bar{x}_i, & i > v
	\end{array}
	\right. \text{and} \ \
	\gamma_i = \left\{
	\begin{array}{ll}
	\bar{x}_i - \alpha_i,  &   i\leq r;\\
	\bar{x}_i, & i > r.
	\end{array}
	\right.
	\end{equation}
	By the above definition of $ \alpha $, the following result can easily be checked.
	\begin{myobservation}
		\label{omegaequality}
		Let $ \alpha $ be defined in \eqref{alphagammadef}. It follows that
		\begin{equation}
		\label{obsineq1}
		\sum_{i=v}^r a_i \alpha_i + \omega \sum_{i=r+1}^n \alpha_i = \omega.
		\end{equation}
	\end{myobservation}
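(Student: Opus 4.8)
The plan is to verify \eqref{obsineq1} by a direct substitution of the definition of $\alpha$ from \eqref{alphagammadef}, since every $\alpha_i$ appearing on the left-hand side is given explicitly. First I would split the first sum as $\sum_{i=v}^r a_i \alpha_i = a_v \alpha_v + \sum_{i=v+1}^r a_i \alpha_i$. For the isolated term, the definition gives $a_v \alpha_v = \omega(1-\delta) - \sum_{i=v+1}^r a_i \bar{x}_i$, the factor $a_v$ cancelling the denominator. For the remaining indices $v+1 \leq i \leq r$ we have $i > v$, hence $\alpha_i = \bar{x}_i$ and $\sum_{i=v+1}^r a_i \alpha_i = \sum_{i=v+1}^r a_i \bar{x}_i$. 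Adding these, the two copies of $\sum_{i=v+1}^r a_i \bar{x}_i$ cancel, so $\sum_{i=v}^r a_i \alpha_i = \omega(1-\delta)$.

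Next I would treat the second sum. Since $v \leq r$ by \eqref{vcond}, every index $i \geq r+1$ satisfies $i > v$, so again $\alpha_i = \bar{x}_i$. Using the definition $\delta = \sum_{i=r+1}^n \bar{x}_i$ from \eqref{intparsepaineq}, this yields $\omega \sum_{i=r+1}^n \alpha_i = \omega \sum_{i=r+1}^n \bar{x}_i = \omega \delta$. Combining the two pieces gives $\sum_{i=v}^r a_i \alpha_i + \omega \sum_{i=r+1}^n \alpha_i = \omega(1-\delta) + \omega\delta = \omega$, as claimed.

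The computation is entirely routine, and there is no genuine obstacle. The only point that must be checked is the placement of the index $v$: the condition $v \leq r$ guarantees that all indices $i > r$ belong to the range $i > v$ on which $\alpha$ agrees with $\bar{x}$, and that the single boundary value $\alpha_v$ is precisely the term designed so that its $a_v$-weighted contribution absorbs the discrepancy $\omega(1-\delta) - \sum_{i=v+1}^r a_i \bar{x}_i$. This is exactly how $\alpha_v$ was chosen in \eqref{alphagammadef}, so the equality holds essentially by construction.
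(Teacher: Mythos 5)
Your proof is correct, and it is precisely the direct substitution the paper has in mind when it states the observation ``can easily be checked'' without supplying an argument: the term $a_v\alpha_v$ is built to equal $\omega(1-\delta)-\sum_{i=v+1}^r a_i\bar{x}_i$, the indices $i>v$ contribute $\sum_{i=v+1}^r a_i\bar{x}_i$ and $\omega\delta$, and everything collapses to $\omega$. Your one point of care --- that $v\leq r$ ensures $\alpha_i=\bar{x}_i$ for all $i\geq r+1$ --- is exactly the right detail to check, so nothing is missing.
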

	\noindent Under the decomposition of \eqref{alphagammadef}, Pochet and Wolsey \cite{Pochet1995} further established the following
 decomposition result.
	\begin{theorem}
		\label{nontrivialresult}
		Given a point $ \bar{x}\in Z_L(b) $, let $ \delta $,  $ \kappa $, and $  \omega $ defined as in \eqref{intparsepaineq}. Consider the case \eqref{nontrivialcase} and define $ \alpha $ and $ \gamma $ as in \eqref{alphagammadef}. Then $ \bar{x} \in \text{conv}(Z(b)) $ if and only if $ \gamma \in \text{conv}(Z(b-\omega)) $. Moreover, for $ Z(b-\omega) $, if
		$ \gamma $ violates the $ \Pi $-partition inequality \eqref{intpartitionineq} by $ \epsilon $, then for $ Z(b) $,
		\begin{itemize}
			\item [\rm{(i)}]
			if $ a_{v} \leq b - \omega $, it follows that $ \bar{x} $ violates $ \Pi $-partition inequality \eqref{intpartitionineq} by $ \epsilon $;
			\item [\rm{(ii)}] if $ a_{v} >b - \omega $, it follows that $ i_p < v $ and
			$ \bar{x} $ violates the $ \Pi_1 $-partition inequality \eqref{intpartitionineq} by $ \epsilon $, where
			\begin{equation}
			\Pi_1 = \Pi \backslash \big \{\{ i_p, \ldots, n \} \big \} \cup \big \{ \{ i_p, \ldots, v-1 \} , \{ v, \ldots, n \} \big \}.
			\end{equation}
		\end{itemize}
	\end{theorem}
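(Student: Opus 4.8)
The plan is to derive everything from the additive splitting $\bar{x}=\gamma+\xi$, where by \eqref{alphagammadef} the vector $\xi:=\bar{x}-\gamma$ is nonnegative, supported on $\{v,\ldots,r\}$, and equal to $\alpha_i$ there. First I would record the consequence of Observation~\ref{omegaequality}: since $\alpha_i=\bar{x}_i$ for $i>r$ and $\sum_{i>r}\bar{x}_i=\delta$, equation \eqref{obsineq1} reads $\sum_{i=v}^{r}a_i\xi_i=\omega(1-\delta)$, so that $\xi$ carries exactly the weight $\omega(1-\delta)$ while the large variables $i>r$, which lie entirely inside $\gamma$, carry the remaining $\omega\delta$. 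I would also note that $\gamma\ge 0$ and $\gamma\in Z_L(b-\omega)$ follow directly from \eqref{vcond}. This single identity is what forces the two violations to coincide.

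Next I would compare the parameters \eqref{paradef} of the $\Pi$-partition inequality for the two targets $b$ and $b-\omega$, writing $c_i^{b}$ and $c_i^{b-\omega}$ for the coefficient of $x_i$ in \eqref{intpartitionineq} in each case. The crucial point is that validity of $\Pi$ for $Z(b-\omega)$ gives $a_{i_p}\le b-\omega$, while $b-\omega=b-\lfloor b/a_r\rfloor a_r\in(0,a_r)$ (as $a_r\nmid b$); hence $a_{i_p}<a_r$, so $i_p<r$ and therefore $a_{i_p}\mid a_r\mid\omega$. Setting $c=\omega/a_{i_p}\in\mathbb{Z}_{++}$, the ceiling in \eqref{paradef} yields $\kappa_p^{b}=\kappa_p^{b-\omega}+c$ and, after substituting, $\beta_{p-1}^{b}=\beta_{p-1}^{b-\omega}$. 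Consequently all lower parameters $\kappa_1,\ldots,\kappa_{p-1}$ and the product $\prod_{l<p}\kappa_l$ are identical for the two targets, only the last block's coefficient changes, and the right-hand sides obey $\prod_{t=1}^{p}\kappa_t^{b}-\prod_{t=1}^{p}\kappa_t^{b-\omega}=c\prod_{l<p}\kappa_l$.

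With these two facts the transfer in case (i) is a direct computation. Expanding the left-hand side $LHS^{b}(\bar{x})$ of \eqref{intpartitionineq} for $Z(b)$ as $LHS^{b}(\gamma)+LHS^{b}(\xi)$ and subtracting $LHS^{b-\omega}(\gamma)$, the coefficients agree off the last block $B_p=\{i_p,\ldots,n\}$, so the difference equals $\sum_{i=v}^{r}c_i^{b}\xi_i+\sum_{i\in B_p}(c_i^{b}-c_i^{b-\omega})\gamma_i$. Since $a_i\le a_r$ forces $a_i/a_{i_p}\le\kappa_p^{b}$ for $i\le r$, the first sum is unclipped and, using $i_p\le v$ in this case so that $\{v,\ldots,r\}\subseteq B_p$, it equals $\frac{\prod_{l<p}\kappa_l}{a_{i_p}}\sum_{i=v}^{r}a_i\xi_i=c\prod_{l<p}\kappa_l(1-\delta)$ by the weight identity. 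In the second sum the clipping gap $\min\{a_i/a_{i_p},\kappa_p^{b}\}-\min\{a_i/a_{i_p},\kappa_p^{b-\omega}\}$ is nonzero only when $a_i>b-\omega$, which for $i\le v$ is excluded by the case hypothesis $a_v\le b-\omega$ and for $v<i\le r$ is killed by $\gamma_i=0$, leaving only the terms $i>r$, which contribute $c\prod_{l<p}\kappa_l\,\delta$. Adding the two pieces returns $c\prod_{l<p}\kappa_l$, exactly the gap between the right-hand sides, so $\bar{x}$ violates the $\Pi$-inequality for $Z(b)$ by the same $\epsilon$. For case (ii), where $a_v>b-\omega$ forces $i_p<v$, this splitting no longer aligns because $a_v$ exceeds the reduced target; I would first cut $B_p$ at $v$ to form $\Pi_1$, after which $\{i_p,\ldots,v-1\}$ inherits the sub-threshold role and $\{v,\ldots,n\}$ absorbs the over-threshold indices, and the parallel parameter comparison together with \eqref{obsineq1} again reproduces the violation $\epsilon$.

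Finally, the membership equivalence follows by combining this transfer with the description of $\text{conv}(Z(\cdot))$ by the partition inequalities and nonnegativity: the implication $\bar{x}\in\text{conv}(Z(b))\Rightarrow\gamma\in\text{conv}(Z(b-\omega))$ is the contrapositive of the transfer just proved, and for the converse I would invert the maps $\Pi\mapsto\Pi$ and $\Pi\mapsto\Pi_1$, merging the two top blocks of any partition of $N_0$ whose last block begins at $v$ and keeping all others, to turn a partition violated by $\bar{x}$ for $Z(b)$ into one violated by $\gamma$ for $Z(b-\omega)$ by the same amount. I expect the main obstacle to be precisely the bookkeeping in the third paragraph: tracking how the terms $\min\{a_i/a_{i_t},\kappa_t\}$ clip as the target drops to $b-\omega$, checking that the hypotheses on $a_v$ annihilate every excess term, and in particular verifying the positional claim that in case (i) the block $B_p$ already contains $\{v,\ldots,r\}$ (so that \eqref{obsineq1} applies with a uniform last-block coefficient) — a claim that rests on the divisibility $a_{i_p}\mid a_r$ and on the defining property \eqref{vcond} of $v$.
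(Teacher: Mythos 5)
There is a genuine gap, and it sits exactly where you flagged your ``main obstacle'': the positional claim that in case (i) one has $i_p\le v$, so that $B_p=\{i_p,\ldots,n\}\supseteq\{v,\ldots,r\}$ and the weight identity $\sum_{i=v}^{r}a_i\xi_i=\omega(1-\delta)$ meets a uniform, unclipped last-block coefficient $\prod_{l<p}\kappa_l\,a_i/a_{i_p}$. This claim does not follow from the case hypothesis $a_v\le b-\omega$, nor from $a_{i_p}\mid a_r$ and \eqref{vcond}: admissibility of $\Pi$ for $Z(b-\omega)$ only gives $a_{i_p}\le b-\omega$, which is perfectly compatible with $i_p>v$. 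Worse, when $i_p>v$ the conclusion of case (i) itself fails, so no amount of bookkeeping rescues your computation. Concretely, take $n=5$, $a=(2,4,8,16,32)$, $b=25$, $\bar{x}=(0,0,1.85,0,0.5,0.3)$. Then $r=4$, $\omega=16$, $\delta=0.3$, the case \eqref{nontrivialcase} holds, $v=2$ (so $a_v=4\le b-\omega=9$, i.e.\ case (i)), and $\gamma=(0,0,1.05,0,0,0.3)$. For $\Pi=\big\{\{0,1,2\},\{3,4,5\}\big\}$, which is admissible for $Z(9)$ because $a_3=8\le 9$ but has $i_p=3>v$, the inequality \eqref{intpartitionineq} for $Z(9)$ reads $x_0+x_1+x_2+x_3+2x_4+2x_5\ge 2$ and is violated by $\gamma$ by $0.35$, while the same $\Pi$ for $Z(25)$ gives $x_0+x_1+x_2+x_3+2x_4+4x_5\ge 4$, which $\bar{x}$ satisfies (left-hand side $4.05$). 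Hence the transfer statement must be read with the implicit restriction $i_p\le v$ (automatic in case (ii), since $a_{i_p}\le b-\omega<a_v$), which is precisely the class of partitions produced by the recursion: the split indices of Algorithm \ref{intparseparalgo} are non-increasing (if $v^{h+1}>v^h$, then $x^{h+1}_i=\gamma^h_i=0$ for every $i$ with $v^h<i\le r^h$, hence for every $i\in[v^{h+1},r^{h+1}]$ because $r^{h+1}<r^h$, contradicting \eqref{vcond} at iteration $h+1$), so the last block of any partition arising for the subproblem $Z(b-\omega)$ starts at an index at most $v$. Your proof needs this invariant stated and proved; asserting $i_p\le v$ as a consequence of $a_v\le b-\omega$ is simply wrong. (Note also that the paper itself offers no proof to compare against: the theorem is quoted from Pochet and Wolsey \cite{Pochet1995}, and is only ever applied to the algorithm-generated partitions for which the invariant holds.)

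Beyond this, two parts of the statement are sketched rather than proved. Case (ii) is dispatched in one sentence, but it is the more delicate computation: one must build the parameters of $\Pi_1$ for $Z(b)$ from scratch, namely $\kappa_{p+1}=\lceil b/a_v\rceil$ for the new last block $\{v,\ldots,n\}$, then verify via $a_v\mid\omega$ and $0<b-\omega<a_v$ that $\beta_p=b-(\kappa_{p+1}-1)a_v=b-\omega$, so that all remaining blocks of $\Pi_1$ inherit exactly the parameters of $\Pi$ for $Z(b-\omega)$, and only then run the violation comparison; none of this appears. Similarly, the converse direction of the membership equivalence is waved at by ``inverting the maps'': merging the two top blocks only makes sense for partitions of $N_0$ whose last block begins exactly at $v$, you do not say how to handle an arbitrary partition violated by $\bar{x}$ for $Z(b)$, and no violation computation is given for the merged partition. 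As written, the proposal establishes (modulo the $i_p\le v$ restriction) only the forward transfer in case (i); the remaining three quarters of the theorem are not proved.
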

	
	In order to solve the separation problem of $ \text{conv}(Z(b)) $, \citet{Pochet1995} recursively used Theorem \ref{nontrivialresult} to get an equivalent separation problem until one of the cases (i)-(iii) in Theorem \ref{trivialresult} is satisfied. Therefore it suffices to design an algorithm to solve the separation problem of $ \text{conv}(Z(b))$ based on Theorems \ref{trivialresult} and \ref{nontrivialresult}.

  In the following, we shall focus on the case that $ v = 0 $ in \eqref{vcond} and provide an improved separation algorithm. As will be seen in Section \ref{section:separation}, this specific analysis plays an important role in analyzing the general set $X(b)$. To do so, we give a lemma.
	\begin{mylemma}
	 	Let $ r \in N_0 $ such that $ a_r \leq b < a_{r+1} $. Given a point $ \bar{x} \in Z_L(b) $, if $ \bar{x}_1 = \cdots = \bar{x}_{r} = 0 $, then $ \bar{x} \in \text{conv}(Z(b)) $ if and only if the $ N_0 $-partition inequality $ \eqref{npartition} $ holds at $ \bar{x} $.
	\end{mylemma}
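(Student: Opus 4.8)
The forward implication is immediate: inequality \eqref{npartition} is the special case $\Pi=\{N_0\}$ of the partition inequality \eqref{intpartitionineq}, hence it is valid for $Z(b)$ and therefore for $\text{conv}(Z(b))$, so every $\bar x\in\text{conv}(Z(b))$ satisfies it. The plan is to establish the reverse implication together with the forward one by strong induction on the index $r$ determined by $a_r\le b<a_{r+1}$, showing that in every case the two assertions ``$\bar x\in\text{conv}(Z(b))$'' and ``$\bar x$ satisfies \eqref{npartition}'' have the same truth value. Throughout I use that the hypothesis $\bar x_1=\cdots=\bar x_r=0$ makes $\sum_{i=1}^r a_i\bar x_i=0$.

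For the base case I take $a_r\mid b$, which in particular covers $r=0$ since $a_0=1$; here Theorem \ref{trivialresult}(i) gives the equivalence directly. For the inductive step assume $a_r\nmid b$, so that $\omega=\lfloor b/a_r\rfloor a_r<b$ and $0<b-\omega<a_r$. If $\delta\ge 1$, then Theorem \ref{trivialresult}(ii) puts $\bar x$ in $\text{conv}(Z(b))$, while \eqref{npartition} reads $\bar x_0+b\delta\ge b$ and holds because $b\delta\ge b$; both assertions are true. If $\delta<1$ and $\bar x_0<\omega(1-\delta)$, then Theorem \ref{trivialresult}(iii) shows $\bar x\notin\text{conv}(Z(b))$, and since $\omega<b$ we get $\bar x_0<\omega(1-\delta)<b(1-\delta)$, so \eqref{npartition} is violated; both assertions are false.

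The decisive case is the nontrivial one \eqref{nontrivialcase}: $\delta<1$ and $\bar x_0\ge\omega(1-\delta)$. Here I first observe that the hypothesis forces $v=0$ in \eqref{vcond}: as $\omega>0$ and $\delta<1$ we have $\omega(1-\delta)>0$, so for any $v\ge 1$ one gets $\sum_{i=v}^r a_i\bar x_i=0<\omega(1-\delta)$, contradicting the second requirement of \eqref{vcond}, whereas $v=0$ meets both requirements because $\sum_{i=0}^r a_i\bar x_i=\bar x_0\ge\omega(1-\delta)$. With $v=0$ the decomposition \eqref{alphagammadef} collapses to $\gamma_0=\bar x_0-\omega(1-\delta)$, $\gamma_i=0$ for $1\le i\le r$, and $\gamma_i=\bar x_i$ for $i>r$. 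A direct check gives $\gamma\in Z_L(b-\omega)$, and, letting $r'$ be the index with $a_{r'}\le b-\omega<a_{r'+1}$, one has $r'<r$ (since $b-\omega<a_r$) and $\gamma_1=\cdots=\gamma_{r'}=0$, so $\gamma$ again satisfies the hypothesis of the lemma for the smaller instance $Z(b-\omega)$.

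I then apply the main equivalence of Theorem \ref{nontrivialresult}, namely $\bar x\in\text{conv}(Z(b))\iff\gamma\in\text{conv}(Z(b-\omega))$, followed by the induction hypothesis for $r'$, which states that $\gamma\in\text{conv}(Z(b-\omega))$ iff $\gamma$ satisfies the $N_0$-partition inequality of $Z(b-\omega)$. The final step is to verify that this inequality, which reduces to $\gamma_0+(b-\omega)\sum_{i>r'}\gamma_i\ge b-\omega$ because $\gamma_1=\cdots=\gamma_{r'}=0$, is equivalent to \eqref{npartition} for $\bar x$: since $\sum_{i>r'}\gamma_i=\delta$ and $\gamma_0=\bar x_0-\omega(1-\delta)$, it simplifies to $\bar x_0\ge b(1-\delta)$, which is exactly \eqref{npartition} for $\bar x$. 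Chaining these equivalences closes the induction. I expect the only delicate points to be the proof that $v=0$ is forced and the verification that the reduction to $Z(b-\omega)$ preserves both the vanishing hypothesis and the $N_0$-partition inequality; the rest is routine substitution into \eqref{intparsepaineq}--\eqref{alphagammadef}.
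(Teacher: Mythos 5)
Your proof is correct, but it takes a genuinely different route from the paper's. The paper disposes of the lemma in one step by projection: since $\bar x_1=\cdots=\bar x_r=0$ and these variables are nonnegative, membership of $\bar x$ in $\text{conv}(Z(b))$ is equivalent to membership of the reduced point $\hat x=(\bar x_0,\bar x_{r+1},\ldots,\bar x_n)$ in the convex hull of the reduced knapsack set $Z'(b)$ with weights $1,a_{r+1},\ldots,a_n$; in $Z'(b)$ the largest weight not exceeding $b$ is $a_0=1$, which divides $b$, so Theorem \ref{trivialresult}(i) applied to $Z'(b)$ immediately gives the equivalence with $\hat x_0+b\sum_{i>r}\hat x_i\ge b$, which coincides with \eqref{npartition} under the vanishing hypothesis --- no induction and no use of Theorem \ref{nontrivialresult} at all. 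You instead keep all variables and replay the Pochet--Wolsey recursion: strong induction on $r$, with the divisible case $a_r\mid b$ as the base via Theorem \ref{trivialresult}(i), the cases $\delta\ge1$ and $\bar x_0<\omega(1-\delta)$ handled directly (your explicit violation check $\bar x_0+b\delta<b$ in the latter case is what actually certifies non-membership, making the appeal to Theorem \ref{trivialresult}(iii) redundant), and, in the remaining case \eqref{nontrivialcase}, the observation that the vanishing hypothesis forces $v=0$ in \eqref{vcond}, so that the decomposition \eqref{alphagammadef} yields a point $\gamma$ satisfying the same hypothesis for the smaller instance $Z(b-\omega)$ with critical index $r'<r$, to which Theorem \ref{nontrivialresult} and the induction hypothesis apply. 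Your closing computation, showing that the $N_0$-partition inequality for $Z(b-\omega)$ at $\gamma$ is equivalent to \eqref{npartition} at $\bar x$, is essentially the algebra the paper performs later when proving Proposition \ref{anothertrivialresult} (which cites this lemma); in effect you have folded the lemma and that proposition into a single induction. Your argument is longer and must track more invariants ($v=0$ forced, $\gamma\in Z_L(b-\omega)$, preservation of the vanishing hypothesis and of \eqref{npartition}), but it is self-contained within Theorems \ref{trivialresult} and \ref{nontrivialresult} and simultaneously yields the content of Proposition \ref{anothertrivialresult}; the paper's projection argument is shorter and isolates the lemma as a pure consequence of the divisible case (i) of Theorem \ref{trivialresult}.
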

	\begin{proof}
		Define a point $ (\hat{x}_0, \hat{x}_{r+1}, \ldots, \hat{x}_n) \in \mathbb{R}^{n+1-r}  $ by setting $ \hat{x}_0 = \bar{x}_0 $, and $ \hat{x}_{i} = \bar{x}_i$ for $ i = r+1, \ldots, n $.
		Since $ \bar{x}_1 = \cdots = \bar{x}_{r} = 0 $, $ \bar{x} \in \text{conv}(Z(b)) $ if and only if $ \hat{x} \in \text{conv}(Z'(b)),$ where
		$$
			Z'(b) = \bigg\{ (x_0,x_{r+1}, \ldots, x_{n}) \in \mathbb{Z}^{n+1-r}_+ \, : \, x_0 + \sum_{i=r+1}^n a_i x_i \geq b  \bigg\}.
		$$
		 As $ a_0 (=1) \, | \,  b  $ and $ a_{r+1} >b  $, it follows
		from item (i) of Theorem \ref{trivialresult} that $ \hat{x} \in \text{conv}(Z'(b)) $ if and only if
		\begin{equation}
			\label{lemtrivialproof}
			\hat{x}_0 + b\sum_{i=r+1}^n \hat{x}_i \geq b.
		\end{equation}
		Since $ \bar{x}_1 = \cdots = \bar{x}_{r} = 0 $, \eqref{lemtrivialproof} is equivalent to that \eqref{npartition} holds at the point $ \bar{x} $, which completes the proof. \qed
	\end{proof}
	
	\begin{mypro}
			\label{anothertrivialresult}
			Given a point $ \bar{x}\in Z_L(b) $, consider the case \eqref{nontrivialcase} and define $ v $ such that \eqref{vcond} is satisfied. If $ v = 0 $, then $ \bar{x} \in \text{conv}(Z(b)) $ if and only if the $ N_0 $-partition inequality \eqref{npartition} is satisfied by $ \bar{x} $. 
	\end{mypro}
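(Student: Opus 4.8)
The plan is to specialize the decomposition of Theorem~\ref{nontrivialresult} to $v=0$ and then invoke the preceding Lemma. First I would substitute $v=0$ into \eqref{alphagammadef}. Since $a_0=1$, this gives $\alpha_0 = \omega(1-\delta) - \sum_{i=1}^{r} a_i \bar{x}_i$ and $\alpha_i = \bar{x}_i$ for $i \geq 1$, whence $\gamma_i = 0$ for every $1 \leq i \leq r$, while $\gamma_0 = \bar{x}_0 - \omega(1-\delta) + \sum_{i=1}^{r} a_i \bar{x}_i$ and $\gamma_i = \bar{x}_i$ for $i>r$. The structural point driving the whole argument is that when $v=0$ the vector $\gamma$ vanishes on all coordinates $1,\ldots,r$.

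Next, Theorem~\ref{nontrivialresult} gives $\bar{x} \in \text{conv}(Z(b))$ if and only if $\gamma \in \text{conv}(Z(b-\omega))$, so it suffices to characterize the membership $\gamma \in \text{conv}(Z(b-\omega))$ via the Lemma. To apply the Lemma to $\gamma$ and $Z(b-\omega)$, I first identify the index $r'$ with $a_{r'} \leq b-\omega < a_{r'+1}$. Because we are in case \eqref{nontrivialcase} we have $a_r \nmid b$, and since $\omega = (\kappa-1)a_r = \lfloor b/a_r \rfloor a_r$ the residual target is $b-\omega = b \bmod a_r$, a positive integer strictly below $a_r$. Hence $r' < r$, so the zero pattern $\gamma_1 = \cdots = \gamma_r = 0$ in particular forces $\gamma_1 = \cdots = \gamma_{r'} = 0$; together with the nonnegativity of $\gamma$ (which follows from the hypothesis $\bar{x}_0 + \sum_{i=1}^{r} a_i \bar{x}_i \geq \omega(1-\delta)$), the Lemma applies and yields $\gamma \in \text{conv}(Z(b-\omega))$ if and only if the $N_0$-partition inequality $\gamma_0 + \sum_{i=1}^{r'} a_i \gamma_i + (b-\omega)\sum_{i=r'+1}^{n} \gamma_i \geq b-\omega$ holds at $\gamma$.

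Finally I would show this last inequality is identical to \eqref{npartition} evaluated at $\bar{x}$. Using $\gamma_1 = \cdots = \gamma_r = 0$, the left-hand side collapses: the tail sum becomes $\sum_{i=r'+1}^{n}\gamma_i = \sum_{i=r+1}^{n}\bar{x}_i = \delta$, leaving $\gamma_0 + (b-\omega)\delta \geq b-\omega$. Substituting the value of $\gamma_0$ and simplifying, the $\omega$-terms cancel and the inequality becomes $\bar{x}_0 + \sum_{i=1}^{r} a_i \bar{x}_i \geq b(1-\delta)$, which after moving $b\delta = b\sum_{i=r+1}^{n}\bar{x}_i$ to the left is exactly \eqref{npartition}. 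Combining the two equivalences proves the proposition. Nothing here is genuinely deep; the only steps needing care are verifying that $b-\omega$ falls strictly below $a_r$ (so that $r' < r$ and the zero pattern of $\gamma$ suffices for the Lemma) and carrying out the cancellation in the concluding algebraic identity.
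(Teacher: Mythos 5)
Your proof is correct and follows essentially the same route as the paper: reduce via Theorem~\ref{nontrivialresult} to membership of $\gamma$ in $\text{conv}(Z(b-\omega))$, observe that $v=0$ forces $\gamma_1=\cdots=\gamma_r=0$ so the preceding Lemma applies, and then verify algebraically that the resulting $N_0$-partition inequality for $Z(b-\omega)$ at $\gamma$ coincides with \eqref{npartition} at $\bar{x}$. The only cosmetic difference is that you substitute the explicit value of $\gamma_0$ to carry out the cancellation, whereas the paper packages the same identity by adding Observation~\ref{omegaequality} to the inequality.
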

	\begin{proof}
		By Theorem \ref{nontrivialresult}, $ \bar{x} \in \text{conv}(Z(b))  $ if and only if $ \gamma \in \text{conv}(Z(b-\omega)) $. Let $ r' \in N_0  $ such that $ a_{r'} \leq b- \omega < a_{r'+1} $. It follows from $ a_r \leq b < a_{r+1} $ that $ r' \leq r $. From the fact that $ v =0 $ and the definition of $ \gamma $ in \eqref{alphagammadef}, we have
		\begin{equation}\label{gammares1}
			 \gamma_1= \cdots = \gamma_{r'} =\gamma_{r'+1} = \cdots = \gamma_r =  0.
		\end{equation}
		Together with Lemma \ref{anothertrivialresult}, $ \bar{x} \in \text{conv}(Z(b)) $ is further equivalent to the fact that
		\begin{equation}
			\label{pro1ineqgamma}
			\gamma_0 + \sum_{i=1}^{r'} a_i\gamma_i  + (b-\omega)\sum_{i=r'+1}^n \gamma_i \geq b - \omega.
		\end{equation}
		By $v=0$ and the definitions of $ \alpha $ and $ \gamma $ in \eqref{alphagammadef},  we have
		\begin{equation}\label{gammaalphares1}
			\begin{array}{ll}
			 \bar{x}_0 = \alpha_0 + \gamma_0,  & \\
			\bar{x}_i = \alpha_{i},  &\qquad  \text{for} \ i=1, \ldots,r,\\
			\bar{x}_i = \alpha_{i} = \gamma_i, & \qquad \text{for} \ i=r+1, \ldots,n.
			\end{array}
		\end{equation}
		Adding \eqref{pro1ineqgamma} and \eqref{obsineq1}, the right hand side is $ b $ while the left hand side is
		 \begin{equation}
			\begin{aligned}
			& \ \gamma_0 + \sum_{i=1}^{r'} a_i\gamma_i  + (b-\omega)\sum_{i=r'+1}^n \gamma_i +
			\sum_{i=0}^r a_i \alpha_i + \omega \sum_{i=r+1}^n \alpha_i
			\\
			= &\ \gamma_0 + (b-\omega) \sum_{i=r+1}^n \gamma_i + \sum_{i=0}^r a_i \alpha_i + \omega \sum_{i=r+1}^n \alpha_i  \qquad  (\text{from \eqref{gammares1}})\\
			= &\ (\gamma_0 + \alpha_0) + (b-\omega) \sum_{i=r+1}^n \gamma_i +\sum_{i=1}^r a_i \alpha_i + \omega \sum_{i=r+1}^n \alpha_i & \\
			= & \ \bar{x}_0 + \sum_{i=1}^r a_i \bar{x}_i + b \sum_{i=r+1}^n \bar{x}_i .  \qquad  (\text{from \eqref{gammaalphares1}})
			\end{aligned}
		 \end{equation}
		 This implies that $ \bar{x} \in \text{conv}(Z(b)) $ if and only if the $ N_0 $-partition inequality \eqref{npartition} is satisfied by $ \bar{x} $. \qed
	%
	\end{proof}
	
	Based on Theorems \ref{trivialresult} and \ref{nontrivialresult} and Proposition \ref{anothertrivialresult}, we describe a procedure of separating the partition inequality \eqref{intpartitionineq} in Algorithm \ref{intparseparalgo}.
	\IncMargin{0.5em}
	\begin{algorithm}[ht!]
		\caption{The separation algorithm for the partition inequalities \eqref{intpartitionineq} for $ Z(b) $}
		\label{intparseparalgo}
		\KwIn{The set $ Z(b) $ and the point $ \bar{x}  \in  Z_L(b)$}
		Initialize $ b^1 = b $, $ x^1 = \bar{x} $, $ h=1 $, $ \Pi = \{ \{ 1,\ldots,n \} \} $, and $ \bar{n} = n $\;
		\While{}
		{
			Compute $ r^h $ such that $ a_{r^h} \leq b^h < a_{r^h+1} $ and $ \delta^h $, $ \kappa^h $, $ \omega^h $ as in \eqref{intparsepaineq} \; \label{paralgo:step1}
			\If {$ a_{r^h} | b^h $}
			{
				\label{algostop:cond1}
				\eIf {$ x^h_0 + \sum_{i=1}^{r^h} a_i x^h_i + b^h \sum_{i=r^h+1}^n x^h_i < b^h $}
				{
					{\bf Stop} and construct the violated $ \Pi $-partition inequality \label{stopandfind1} \;
				}
				{
					{\bf Stop} and report that no violated inequality exists   \;
				}
			}
			\If {$ \delta^h \geq 1 $}
			{
				 {\bf Stop} and report that no violated inequality exists\;
			}
			\If {$x^h_0 + \sum_{i=1}^{r^h} a_i x^h_i + \omega^h \sum_{i=r^h+1}^n x^h_i < \omega^h$}
			{
				{\bf Stop} and construct the violated $ \Pi $-partition inequality \label{stopandfind2}\;
			}
			Compute $ v^h $ such that \eqref{vcond} \label{paralgo:keystep}\;
			\If { $ v^h = 0 $ }
			{ \label{paralgo:keystep1}
				\eIf {$ x^h_0 + \sum_{i=1}^{r^h} a_i x^h_i + b^h \sum_{i=r^h+1}^n x^h_i < b^h $}
				{
					{\bf Stop} and construct the violated $ \Pi $-partition inequality \label{stopandfind3} \;
				}
				{
					{\bf Stop} and report that no violated inequality exists \label{newstopandnofind}  \;
				}
			}	\label{paralgo:keystep2}
			
			Decompose $ x^h $ into $ \alpha^h $ and $ \gamma^h $ as in \eqref{alphagammadef}\;
			\If {$ a_{v^h} > b^h  - \omega^h $}
			{
				Update  $ \bar{n}\leftarrow v^h-1 $ and the partition $ \Pi $ as
				\begin{equation}
				\label{newpartition}
				\Pi \leftarrow \Pi \backslash \big\{\{0\ldots, \bar{n}\} \big \} \cup \big \{\{0,\ldots,v^h-1\}, \{v^h,\ldots,\bar{n}\} \big\};
				\end{equation}
			}
			Let $b^{h+1} = b^h-\omega^h $, $ x^{h+1} = \gamma^h $, and $ h\leftarrow h+1 $ \label{paralgo:recompstep} \;
		}
	\end{algorithm}
	\DecMargin{0.5em}

	\begin{mypro}
		\label{intparmostviolated}
		Given a point $ \bar{x} \in Z_L(b) $ with $\bar{x} \notin \text{conv}(Z(b)) $, Algorithm {\rm \ref{intparseparalgo}} finds the most violated inequality of \eqref{intpartitionineq}.
	\end{mypro}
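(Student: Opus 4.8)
The plan is to verify that Algorithm \ref{intparseparalgo} faithfully executes the recursion furnished by Theorems \ref{trivialresult} and \ref{nontrivialresult} and Proposition \ref{anothertrivialresult}, and to track a single quantity through that recursion: for a pair $(Z(c),z)$ write $\epsilon(Z(c),z)$ for the largest violation of a partition inequality \eqref{intpartitionineq} over all admissible partitions of $N_0$, so that $\epsilon\le 0$ exactly when $z\in\text{conv}(Z(c))$. I would first dispose of termination. Every iteration that does not stop lies in case \eqref{nontrivialcase}, whence $a_{r^h}\nmid b^h$ and $b^{h+1}=b^h-\omega^h=b^h\bmod a_{r^h}$ obeys $0<b^{h+1}<a_{r^h}\le b^h$; therefore the index $r^{h+1}$ with $a_{r^{h+1}}\le b^{h+1}<a_{r^{h+1}+1}$ satisfies $r^{h+1}<r^h$, and the loop performs at most $n+1$ passes (once $r^h=0$ we have $1=a_0\mid b^h$ and the algorithm halts in the divisibility branch).

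Correctness I would prove by propagating $\epsilon$. The feasibility equivalence $\bar x\in\text{conv}(Z(b))\iff x^h\in\text{conv}(Z(b^h))$ is the first assertion of Theorem \ref{nontrivialresult}, applied at each step with data $b^h,\omega^h,\gamma^h$. For the violation itself, the second assertion of Theorem \ref{nontrivialresult} gives $\epsilon(Z(b^h),x^h)\ge\epsilon(Z(b^{h+1}),x^{h+1})$: any partition violated at $\gamma^h=x^{h+1}$ is lifted to one violated at $x^h$ by the same amount, the last block being split at $v^h$ — exactly the update \eqref{newpartition} — precisely in case (ii), i.e. when $a_{v^h}>b^h-\omega^h$. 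Composing these lifts from the terminal index $H$ back to $1$ shows that the partition $\Pi$ maintained by the algorithm is precisely the lift of the inequality selected at termination, and that the reported $\Pi$-inequality has violation at $\bar x$ equal to $\epsilon(Z(b^H),x^H)$.

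The essential difficulty is the reverse bound $\epsilon(Z(b^h),x^h)\le\epsilon(Z(b^{h+1}),x^{h+1})$, which upgrades ``violation preserved in one direction'' to ``maximum violation preserved'' and is what turns a terminal most-violated statement into a global one. Here I would take a partition attaining $\epsilon(Z(b^h),x^h)$ and argue that it may be chosen with its last block beginning at an index $\le v^h$; the two defining relations of $v^h$ in \eqref{vcond} say that $v^h$ is the exact threshold at which the tail weight $\sum_{i=v^h+1}^{r^h}a_i x^h_i$ falls below $\omega^h(1-\delta^h)$, and this threshold property is what makes cutting the last block at $v^h$ non-decreasing in violation. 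For such a compatible optimal partition the decomposition \eqref{alphagammadef}, together with the identity \eqref{obsineq1} of Observation \ref{omegaequality}, runs backwards to yield a partition for $(Z(b^{h+1}),x^{h+1})$ of at least the same violation. I expect this compatibility/monotonicity step to be the main obstacle, since it is the only place where one must reason about all partitions simultaneously rather than follow a prescribed one.

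It remains to check the terminal branches, after which the invariant finishes the proof. If $\delta^h\ge1$, Theorem \ref{trivialresult}(ii) gives $x^h\in\text{conv}(Z(b^h))$, so reporting ``no violated inequality'' is correct by the invariant, and the other no-violation reports are justified identically using Theorem \ref{trivialresult}(i) or Proposition \ref{anothertrivialresult}. If $x^h_0+\sum_{i=1}^{r^h}a_i x^h_i+\omega^h\sum_{i=r^h+1}^{n} x^h_i<\omega^h$, then Theorem \ref{trivialresult}(iii) states outright that \eqref{npartition} attains $\epsilon(Z(b^h),x^h)$. In the divisibility stop $a_{r^h}\mid b^h$, I would show that divisibility forces every $\kappa_t$ in \eqref{paradef} to arise from an exact division, so that all admissible partition inequalities \eqref{intpartitionineq} (those with $a_{i_p}\le b^h$) collapse to the single inequality \eqref{npartition}, which is then trivially most violated. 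Finally, in the stop $v^h=0$, I would combine Proposition \ref{anothertrivialresult} with the support structure \eqref{gammares1} of the reduced point $\gamma^h$, which is zero on coordinates $1,\dots,r^h$, to reduce to a point carried by $\{0\}\cup\{i:a_i>b^h\}$, for which a short coefficientwise comparison shows \eqref{npartition} is the most violated partition inequality. Assembling termination, the preserved maximum violation, and these terminal identifications proves that Algorithm \ref{intparseparalgo} returns an inequality attaining $\epsilon(Z(b),\bar x)$, i.e. a most violated one.
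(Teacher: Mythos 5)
Your overall architecture --- an invariant asserting that the maximum violation is preserved when passing from $(Z(b^h),x^h)$ to $(Z(b^{h+1}),x^{h+1})$, plus a direct identification of the most violated inequality in each terminal branch --- would prove the proposition, but the central step of that architecture, the reverse bound $\epsilon(Z(b^h),x^h)\le\epsilon(Z(b^{h+1}),x^{h+1})$, is exactly what you do not prove. Theorem \ref{nontrivialresult} only lifts a violated inequality for $Z(b^{h+1})$ at $\gamma^h$ to an equally violated one for $Z(b^h)$ at $x^h$; it says nothing about whether a \emph{most} violated inequality for $Z(b^h)$, whose blocks need not be compatible with the split at $v^h$, can be pushed down to $Z(b^{h+1})$ without losing violation. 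Your sketch (``choose an optimal partition whose last block begins at an index $\le v^h$ and run the decomposition \eqref{alphagammadef} backwards'') is an assertion of precisely this hard content, not an argument: it is essentially Theorem 13 of \cite{Pochet1995}, which you neither prove nor invoke. Without it, your forward lifts only show that the inequality reported by the algorithm has violation $\epsilon(Z(b^k),x^k)\le\epsilon(Z(b),\bar{x})$, i.e., a lower bound on the maximum, not that it attains the maximum. (Your terminal-case analyses --- e.g., that under $a_{r^h}\mid b^h$ all admissible inequalities \eqref{intpartitionineq} collapse to \eqref{npartition}, and the treatment of points supported on $\{0\}\cup\{i:a_i>b^h\}$ --- are correct as far as they go, but they concern only the reduced problem at termination and cannot substitute for the missing propagation step.)

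For comparison, the paper closes this gap without ever proving a two-sided invariant. For stops at steps \ref{stopandfind1} and \ref{stopandfind2} the execution coincides with the algorithm of \cite{Pochet1995}, so their Theorem 13 is cited verbatim. For the new stop at step \ref{stopandfind3} (the $v^k=0$ branch) it uses a short perturbation argument: if some $\Pi'$-partition inequality were violated by $\epsilon'>\epsilon$, define $\hat{x}$ by $\hat{x}_0=\bar{x}_0+\epsilon$ and $\hat{x}_i=\bar{x}_i$ otherwise. Since $v^h\ge 1$ in all earlier iterations, coordinate $0$ passes through the recursion unchanged (Remark \ref{remark1}), so the run on $\hat{x}$ reproduces the same data with $x^k_0$ increased by $\epsilon$ and therefore stops at step \ref{newstopandnofind}, certifying $\hat{x}\in\text{conv}(Z(b))$ via Proposition \ref{anothertrivialresult}. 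But every inequality \eqref{intpartitionineq} has coefficient $1$ on $x_0$, so the $\Pi'$-inequality would still be violated at $\hat{x}$ by $\epsilon'-\epsilon>0$, a contradiction. This trick converts the global ``most violated'' claim into the already-established membership test --- precisely the piece your proposal leaves open. To repair your proof you would either have to reproduce the Pochet--Wolsey Theorem 13 argument for the reverse bound, or adopt a contradiction device of this kind.
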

	\begin{proof}
		Since $ \bar{x} \notin \text{conv}(Z(b)) $, Algorithm \ref{intparseparalgo} stops in
		the step \ref{stopandfind1}, \ref{stopandfind2}, or \ref{stopandfind3}.
		If Algorithm \ref{intparseparalgo} stops in the step \ref{stopandfind1} or \ref{stopandfind2},
		\citet{Pochet1995} have shown that a most violated inequality of \eqref{intpartitionineq} can be found (See Theorem 13 in \cite{Pochet1995}).
		We now consider the case that Algorithm \ref{intparseparalgo} stops in the step \ref{stopandfind3}.
		
		Suppose that Algorithm \ref{intparseparalgo} stops at the $k$-th iteration and shows that $ \bar{x} $ violates some
		$ \Pi $-partition inequality by $ \epsilon $ but $ \bar{x} $ violates another $ \Pi' $-partition inequality by $ \epsilon' > \epsilon $. Define a new point $ \hat{x} $ as $ \hat{x}_0= \bar{x}_0 +\epsilon $ and $  \hat{x}_i= \bar{x}_i  $ otherwise. Since $ v^h \geq 1 $ during the first $ k-1 $ iterations, applying Algorithm \ref{intparseparalgo} to $ \hat{x} $, it is not difficult to verify that Algorithm \ref{intparseparalgo} will stop in the step \ref{newstopandnofind}. So $ \hat{x} \in \text{conv}(Z(b)) $. However, for the $ \Pi' $-partition inequality, as the coefficient of $ x_1 $ is $ 1 $, its violation at the point $ \hat{x} $ is $ \epsilon' - \epsilon > 0 $, which leads to a contradiction. This completes the proof. \qed
	\end{proof}
	
	\begin{myremark}
		\label{remark1}
		Suppose that Algorithm {\rm\ref{intparseparalgo}} terminates at the $k$-th iteration. With the steps {\rm\ref{paralgo:keystep1}-\ref{paralgo:keystep2}}, we know, the value $ x^h_0 $, $ h=1,\ldots, k $ are the same in Algorithm {\rm\ref{intparseparalgo}}. In addition, except the last iteration, we have $ v^h \geq 1 $ during the first $ k-1 $ iterations. This property plays an important role in the coming combinatorial separation algorithm for $ \text{conv}(X(b)) $.
	\end{myremark}
	
	The following complexity result is implied in \cite{Pochet1995} and stated explicitly in \cite{Hartmann1994}.
	\begin{theorem}
		\label{complexityAlg1}
		The separation of the \rev{partition inequalities} \eqref{intpartitionineq} can be done in $ \mathcal{O}(n) $ time using Algorithm {\rm{\ref{intparseparalgo}}}.
	\end{theorem}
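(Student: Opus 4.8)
The plan is to prove the $\mathcal{O}(n)$ bound by bounding separately the number of iterations of the \textbf{while} loop and the total work performed across all iterations, and showing each is $\mathcal{O}(n)$.

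First I would establish that the index $r^h$ computed in step \ref{paralgo:step1} strictly decreases from one iteration to the next. Whenever the loop does not terminate we are in the nontrivial case \eqref{nontrivialcase}, so $a_{r^h}\nmid b^h$; from \eqref{intparsepaineq} we have $\omega^h=\lfloor b^h/a_{r^h}\rfloor\,a_{r^h}$, whence the updated right-hand side satisfies $b^{h+1}=b^h-\omega^h=b^h\bmod a_{r^h}$ and therefore $0<b^{h+1}<a_{r^h}$. Since the weights satisfy $1=a_0<a_1<\cdots<a_n$, the defining relation $a_{r^{h+1}}\le b^{h+1}<a_{r^{h+1}+1}$ forces $a_{r^{h+1}}<a_{r^h}$, i.e. $r^{h+1}<r^h$. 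As $r^h\ge 0$ throughout and drops by at least one each time, the loop runs at most $r^1+1\le n+1$ iterations; in particular, once $r^h=0$ we have $a_0=1\mid b^h$ and the loop halts through the test in step \ref{algostop:cond1}.

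Next I would bound the total work. The quantities $\kappa^h,\omega^h$ and the stopping tests are $\mathcal{O}(1)$ per iteration given the running value of $\delta^h$ and a pointer to $r^h$; since $r^h$ only moves downward, locating all the $r^h$ costs $\mathcal{O}(n)$ in total, and $\delta^{h}=\sum_{i=r^h+1}^n x^h_i$ can be maintained incrementally as this pointer descends through $(r^{h+1},r^h]$. The crux is that the searches for $v^h$ in step \ref{paralgo:keystep} also cost $\mathcal{O}(n)$ in aggregate. To see this I would exploit the decomposition \eqref{alphagammadef}: it zeroes out exactly the coordinates $i\in(v^h,r^h]$ of $x^{h+1}=\gamma^h$, modifies only the single coordinate $v^h$, and leaves all others unchanged. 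Combining this with $r^{h+1}<r^h$ and the fact that the threshold $\omega^{h+1}(1-\delta^{h+1})$ is positive (otherwise the loop would already have stopped through the $\delta\ge 1$ test), one shows that the partial sum $\sum_{i=v+1}^{r^{h+1}}a_i x^{h+1}_i$ vanishes for all $v\ge v^h$, so that the next search index obeys $v^{h+1}\le v^h$. Hence the lower scanning boundary is monotonically non-increasing, and the coordinate ranges actually inspected while locating successive $v^h$ overlap in at most their endpoints; the total length of these scans is therefore $\mathcal{O}(n)$. This is consistent with Remark \ref{remark1}, where $v^h\ge 1$ on all but the last iteration.

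Finally I would account for the bookkeeping of the partition $\Pi$: each execution of the branch $a_{v^h}>b^h-\omega^h$, which applies the update \eqref{newpartition}, inserts a single new breakpoint, and there are at most $\mathcal{O}(n)$ iterations, so maintaining $\Pi$ with a linked structure and constructing the final violated inequality of the form \eqref{intpartitionineq} when the loop stops both cost $\mathcal{O}(n)$. Adding the three contributions yields the claimed $\mathcal{O}(n)$ bound. I expect the main obstacle to be the amortized analysis of the $v^h$-search: one must argue carefully, using the localized zero pattern of $\gamma^h$ on $(v^h,r^h]$ together with the monotonicity $r^{h+1}<r^h$ and $v^{h+1}\le v^h$, that no coordinate is re-examined more than a constant number of times, so that the per-iteration scans—each potentially of length $\Theta(n)$ in the worst case—telescope to linear total cost rather than to $\mathcal{O}(n^2)$.
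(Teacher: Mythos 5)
Your proposal is correct, but note that the paper does not actually prove this theorem: it is stated with a pointer to the literature (``implied in \cite{Pochet1995} and stated explicitly in \cite{Hartmann1994}''), so you are supplying an argument where the paper supplies only a citation. What you give is a self-contained amortized analysis of Algorithm \ref{intparseparalgo}: (a) the iteration bound, via $b^{h+1}=b^h\bmod a_{r^h}$, which forces $r^{h+1}<r^h$ and hence at most $n+1$ iterations; and (b) linear total work, via the two monotone pointers $r^h$ (strictly decreasing) and $v^h$ (non-increasing, which you correctly derive from the zero pattern of $\gamma^h$ on the block $(v^h,r^h]$ together with positivity of the threshold $\omega^{h+1}(1-\delta^{h+1})$). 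This has the extra merit of covering the algorithm as actually stated in the paper, which differs from the original Pochet--Wolsey procedure by the added $v^h=0$ stopping test of Proposition \ref{anothertrivialresult}; the citation, strictly speaking, covers only the original procedure. One detail you should make explicit, since as written it is the weakest link: the claim that successive scan ranges ``overlap in at most their endpoints'' is false for a naive search that starts each scan for $v^{h+1}$ at $r^{h+1}$ --- the ranges $[v^{h+1},r^{h+1}]$ and $[v^h,r^h]$ can share the entire interval $[v^h,r^{h+1}]$, and with $v^h$ stuck near $1$ and $r^h$ decreasing by one each time this naive scan is $\Theta(n^2)$ overall. It becomes true exactly because the vanishing of $\sum_{i=v+1}^{r^{h+1}}a_i x^{h+1}_i$ for all $v\ge v^h$ (which you do establish) entitles the implementation to start the search at $\min\{v^h,r^{h+1}\}$ rather than at $r^{h+1}$; the inspected ranges are then essentially $[v^{h+1},v^h]$, these telescope, and the $\mathcal{O}(n)$ total follows as you claim.
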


	\section{A combinatorial separation algorithm for the continuous $ \geq $-knapsack polyhedron}
	\label{section:separation}
	In this section, we study the separation problem of the $ \geq $-continuous knapsack polyhedron, i.e., $ \text{conv}(X(b)) $. Our purpose is to find an exact polynomial-time algorithm for separating the related family of partition inequalities, which is shown in \cite{Wolsey2016} to described $ \text{conv}(X(b)) $ with the trivial inequalities.
	
	We first introduce the partition inequalities for $ \text{conv}(X(b)) $.
	Let $ C \subseteq M $ such that $ b(C)= b - u(M\backslash C) > 0  $. Define $ \Pi $ as in \eqref{partition}
	be a partition of $ N_0$, where $ p \in \mathbb{Z}_{++} $ and $ a_{i_p} \leq b(C)  $. Let $ \beta_{p} =b(C)$  and define
	$ \beta_{t-1} $, $ \kappa_t $, $ \mu_t $ as in \eqref{paradef} for $ t=p,\ldots, 1 $.
	Then the partition inequality derived in \cite{Wolsey2016} is
	\begin{equation}
	\label{partitionineq}	
	s_0 +  s(C) +\sum_{i=1}^{j_1}\min\{a_i, \kappa_1\}x_i + \sum_{t=2}^p \prod_{l=1}^{t-1}\kappa_l \sum_{i=i_t}^{j_t} \min\bigg \{ \frac{a_i}{a_{i_t}}, \kappa_t \bigg \} x_i \geq \prod_{t=1}^{p}\kappa_t.
	\end{equation}
	As can be easily seen, different partitions of $ N_0 $ or subsets of $ M $ may lead to different partition inequalities of \eqref{partitionineq}. For convenience, we call the inequality \eqref{partitionineq} the $ (\Pi, C) $-partition inequality.
	Similarly, if the partition is $ \{ N_0 \} $, we call \eqref{partitionineq} the $ (N_0, C) $-partition inequality.
	
	The main difficulty in separating the family of partition inequalities \eqref{partitionineq} comes from its dependence on both the partition of $ N_0 $ and subsets of $ M $. On one hand, fixing the subset $ C \subseteq M $, the number of the inequalities \eqref{partitionineq} may be exponential due to the exponential number of partitions $ \Pi $. On the other
	hand, fixing the partition $ \Pi $ in \eqref{partition},
	the number of the inequalities \eqref{partitionineq} may also be exponential due to the exponential number of the subset $ C $. Therefore, given a point $ (\bar{x}, \bar{s}) $, a most violated partition inequality \eqref{partitionineq} would come from a very complicated procedure since both the subset $ C $ and the partition $ \Pi $ have a great impact on the generated partition inequality.

Nevertheless, the following observation indicates that given a fixed subset $ C \subseteq M$, it is easy to separate the partition inequalities  \eqref{partitionineq}.
	
	\begin{myobservation}
		\label{continteq}
		Given a point $　(\bar{x}, \bar{s}) \in X_L(b) $ and a fixed subset $ C \subseteq M $. Define a new point $ \hat{x} \in \mathbb{R}_+^{n+1}$ by setting $ \hat{x}_0 = \bar{s}_0+ \sum_{j \in C} \bar{s}_j$ and $ \hat{x}_i = \bar{x}_i $, $ i=1,\ldots,n $. Then, $ (\bar{x}, \bar{s}) $ violates the $ (\Pi,C) $-partition inequality \eqref{partitionineq} of $ X(b) $ if and only if $ \hat{x} $ violates the $ \Pi $-partition inequality \eqref{intpartitionineq} of $ Z(b(C))  $ and the violations are the same.
	\end{myobservation}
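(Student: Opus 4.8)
The plan is to prove this by a direct term-by-term comparison of the two inequalities, exploiting the fact that they carry identical coefficients on the integer variables and the same right-hand side. The first thing I would observe is that the $(\Pi,C)$-partition inequality \eqref{partitionineq} for $X(b)$ and the $\Pi$-partition inequality \eqref{intpartitionineq} for $Z(b(C))$ are built from the same partition $\Pi$ and, crucially, from the same anchor value $\beta_p = b(C)$ together with the identical recursion \eqref{paradef}. Hence the scalars $\beta_t$, $\kappa_t$, $\mu_t$ generated by \eqref{paradef} are the same in both cases, so the coefficient of each $x_i$ ($i\in N$) and the right-hand side $\prod_{t=1}^p \kappa_t$ coincide in \eqref{partitionineq} and in the instance of \eqref{intpartitionineq} for $Z(b(C))$.

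Next I would abbreviate the common integer-variable part by
\[
L(x) = \sum_{i=1}^{j_1}\min\{a_i,\kappa_1\}x_i + \sum_{t=2}^p \prod_{l=1}^{t-1}\kappa_l \sum_{i=i_t}^{j_t}\min\left\{\frac{a_i}{a_{i_t}},\kappa_t\right\}x_i .
\]
With this notation, the $(\Pi,C)$-partition inequality evaluated at $(\bar{x},\bar{s})$ reads $\bar{s}_0 + \bar{s}(C) + L(\bar{x}) \geq \prod_{t=1}^p \kappa_t$, while the $\Pi$-partition inequality for $Z(b(C))$ evaluated at $\hat{x}$ reads $\hat{x}_0 + L(\hat{x}) \geq \prod_{t=1}^p \kappa_t$. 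Since $\hat{x}_i = \bar{x}_i$ for every $i\in N$ we have $L(\hat{x}) = L(\bar{x})$, and since $\hat{x}_0 = \bar{s}_0 + \sum_{j\in C}\bar{s}_j = \bar{s}_0 + \bar{s}(C)$, the two left-hand sides are literally equal. Consequently the violation of \eqref{partitionineq} at $(\bar{x},\bar{s})$, namely $\prod_{t=1}^p \kappa_t - (\bar{s}_0 + \bar{s}(C) + L(\bar{x}))$, equals the violation of \eqref{intpartitionineq} at $\hat{x}$, namely $\prod_{t=1}^p \kappa_t - (\hat{x}_0 + L(\hat{x}))$; in particular one inequality holds if and only if the other does, and when violated the two violations agree, which is exactly the claim.

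As a sanity check I would also verify that $\hat{x}$ is a legitimate point for the reduced set, i.e.\ $\hat{x}\in Z_L(b(C))$: from $(\bar{x},\bar{s})\in X_L(b)$ and $\bar{s}(M\backslash C)\leq u(M\backslash C)$ one gets $\hat{x}_0 + \sum_{i\in N}a_i\hat{x}_i = \bar{s}_0 + \bar{s}(C) + \sum_{i\in N}a_i\bar{x}_i \geq b - u(M\backslash C) = b(C)$, so the reduction to the integer $\geq$-knapsack set $Z(b(C))$ is well posed. I do not expect any serious obstacle: the whole content is that folding $\bar{s}_0$ together with $\bar{s}(C)$ into a single continuous surrogate $\hat{x}_0$ turns one inequality into the other. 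The only point that genuinely requires care is confirming that the integer coefficients truly match, and this reduces to noting that both inequalities invoke the recursion \eqref{paradef} with the same starting value $\beta_p = b(C)$.
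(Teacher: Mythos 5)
Your proof is correct and matches the paper's reasoning: the paper states this as an observation without a written proof precisely because the argument is the direct substitution you give, namely that both inequalities are generated by the recursion \eqref{paradef} from the same starting value $\beta_p = b(C)$, so their coefficients and right-hand sides coincide, and folding $\bar{s}_0 + \bar{s}(C)$ into $\hat{x}_0$ makes the two left-hand sides literally equal. Your additional check that $\hat{x} \in Z_L(b(C))$ is a sensible (and correct) detail ensuring the reduction is well posed.
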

	Given a fixed subset $ C \subseteq M $, Observation \ref{continteq} indicates that, by a simple transformation, the partition inequalities \eqref{partitionineq} can be separated using Algorithm \ref{intparseparalgo}. For simplicity, we skip this simple transformation in below and use Algorithm \ref{intparseparalgo} to separate the partition inequality \eqref{partitionineq} for a fixed subset $ C $.
	
	Choosing the subset $ C \subseteq M $ is still nontrivial. In the remainder, however, we can show that by considering at most $ m+1 $ specific subsets of $ M $, a most violated partition inequality \eqref{partitionineq} can be found. Then it is sufficient to apply Algorithm \ref{intparseparalgo} to the partition \rev{inequalities \eqref{partitionineq}} restricted to these $ m+1 $ particular subsets of $ M $.
	%

	\subsection{\bf The selection of continuous variables in the $(N_0,C) $-partition inequality}
	We now consider the $ (N_0,C) $-partition inequality, which can be written as
	\begin{equation}
	\label{coefineq}
	s_0 +s(C)+  \sum_{i=1}^r a_i x_i + b(C) \sum_{i=r+1}^n x_i  \geq b(C),
	\end{equation}
	where $ r \in N_0 $ satisfying $ a_r \leq b(C)< a_{r+1} $. Notice that the derivation
	of $ (N_0,C) $-partition inequality \eqref{coefineq} does not depend on the divisible property of the coefficients $ a_i $, $ i=1,\ldots, n $. In general, \eqref{coefineq} is valid for $ X(b) $ with arbitrarily positive coefficients $ a_i $, $ i=1, \ldots, n $.
	
	Given a point $ (\bar{x}, \bar{s}) \in X_L(b) $, let $ j^+ $, $ j^- \in M $ such that $ \frac{\bar{s}_{j^+}}{u_{j^+}} \leq \frac{\bar{s}_{j^-}}{u_{j^-}}  $. We shall show that, in order to find a violated inequality of \eqref{coefineq}, the continuous variable $ s_{j^+} $ is more appropriate to incorporate into the inequality \eqref{coefineq} than the continuous variable $ s_{j^-} $.
	\begin{theorem}
		\label{cont}
		Given a point $ (\bar{x}, \bar{s}) \in X_L (b) $, let $ C \subseteq M  $ and $ j^+ $, $ j^- \in M $ be such that $ \frac{\bar{s}_{j^+}}{u_{j^+}} \leq \frac{\bar{s}_{j^-}}{u_{j^-}}  $, $ j^- \in C $, and $ j^+ \notin C $. If the $ (N_0, C) $-partition inequlaity \eqref{coefineq} is a violated inequality and its violation is $ \epsilon >0 $, then either the $ (N_0, C \cup \{j^+\}) $-partition inequality or the $ (N_0, C \backslash \{j^-\}) $-partition inequality is a violated inequality and its violation is at least $ \epsilon $.
	\end{theorem}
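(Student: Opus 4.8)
The plan is to reduce the statement to a one-dimensional convexity (exchange) argument, exploiting that the violation of the $(N_0,C)$-partition inequality \eqref{coefineq} depends on $C$ only through the scalar $b(C)$ and the aggregate $\bar{s}(C)$. First I would rewrite the left-hand side of \eqref{coefineq} evaluated at $(\bar{x},\bar{s})$ as $\bar{s}_0+\bar{s}(C)+\psi(b(C))$, where $\psi(\beta)=\sum_{i\in N}\min(a_i,\beta)\bar{x}_i$; this uses $a_r\le b(C)<a_{r+1}$, so that $\min(a_i,b(C))=a_i$ for $i\le r$ and $\min(a_i,b(C))=b(C)$ for $i>r$. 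Hence the violation is $\epsilon(C)=g(b(C))-\bar{s}_0-\bar{s}(C)$ with $g(\beta)=\beta-\psi(\beta)$. Since each $\min(a_i,\beta)$ is concave in $\beta$ and $\bar{x}_i\ge0$, the function $\psi$ is concave and therefore $g$ is convex; this convexity is the engine of the whole proof.

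Next I would record the two capacity shifts. Adding $j^+$ gives $b(C\cup\{j^+\})=b(C)+u_{j^+}$ and $\bar{s}(C\cup\{j^+\})=\bar{s}(C)+\bar{s}_{j^+}$, while removing $j^-$ gives $b(C\setminus\{j^-\})=b(C)-u_{j^-}$ and $\bar{s}(C\setminus\{j^-\})=\bar{s}(C)-\bar{s}_{j^-}$. Writing $\beta_0=b(C)$, $A=g(\beta_0+u_{j^+})-g(\beta_0)$, and $B=g(\beta_0)-g(\beta_0-u_{j^-})$, the two candidate gains in violation become $\epsilon(C\cup\{j^+\})-\epsilon(C)=A-\bar{s}_{j^+}$ and $\epsilon(C\setminus\{j^-\})-\epsilon(C)=\bar{s}_{j^-}-B$. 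Since $\epsilon(C)=\epsilon>0$, it suffices to show that at least one of these two differences is nonnegative.

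The core step is a proof by contradiction. Suppose both branches failed, i.e.\ $A<\bar{s}_{j^+}$ and $B>\bar{s}_{j^-}$. Applying convexity of $g$ to the ordered points $\beta_0-u_{j^-}<\beta_0<\beta_0+u_{j^+}$ yields the slope inequality $A/u_{j^+}\ge B/u_{j^-}$, i.e.\ $A\,u_{j^-}\ge B\,u_{j^+}$. Chaining the three facts gives $\bar{s}_{j^+}u_{j^-}>A\,u_{j^-}\ge B\,u_{j^+}>\bar{s}_{j^-}u_{j^+}$, hence $\bar{s}_{j^+}/u_{j^+}>\bar{s}_{j^-}/u_{j^-}$, contradicting the hypothesis $\bar{s}_{j^+}/u_{j^+}\le\bar{s}_{j^-}/u_{j^-}$. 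This is precisely where the normalized-slack ordering enters, and it is what forces the correct choice between inserting $j^+$ and deleting $j^-$.

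I expect the main obstacle to be the boundary case $b(C\setminus\{j^-\})=\beta_0-u_{j^-}\le0$, in which $C\setminus\{j^-\}$ does not define a legitimate partition inequality, so the delete branch is unavailable and one must show the insert branch alone succeeds (i.e.\ $A\ge\bar{s}_{j^+}$). To handle it I would first observe that, because \eqref{coefineq} is violated, Observation \ref{continteq} together with Theorem \ref{trivialresult}(ii) applied to $Z(b(C))$ forces $\delta:=\sum_{i>r}\bar{x}_i<1$; then the right slope of $g$ on $[\beta_0,\beta_0+u_{j^+}]$ is at least $1-\delta>0$, so $A\ge(1-\delta)u_{j^+}$. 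If the insert branch failed, then $\bar{s}_{j^+}>A$ gives $\bar{s}_{j^+}/u_{j^+}>1-\delta$, and the normalized-slack ordering together with $u_{j^-}\ge\beta_0$ yields $\bar{s}_{j^-}>(1-\delta)\beta_0$. On the other hand $\psi(\beta_0)\ge\delta\beta_0$ gives $g(\beta_0)\le(1-\delta)\beta_0$, while $\epsilon(C)>0$ with $j^-\in C$ gives $g(\beta_0)>\bar{s}_{j^-}$; combining these produces $(1-\delta)\beta_0\ge g(\beta_0)>\bar{s}_{j^-}>(1-\delta)\beta_0$, a contradiction. Hence in the degenerate regime the insert branch necessarily works, which completes the argument.
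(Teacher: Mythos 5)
Your proof is correct, and it takes a genuinely different route from the paper's, even though the two ultimately rest on cognate estimates. The paper works directly with the signed slacks $\xi$, $\xi'$, $\xi''$ of the three candidate inequalities: assuming the delete option fails, it splits into the cases $b(C)\le u_{j^-}$ (your degenerate case, where the $(N_0,C\setminus\{j^-\})$-inequality does not exist) and $b(C)>u_{j^-}$, proves in both cases the key Claim \ref{claim1}, which amounts to $\bar{s}_{j^-}/u_{j^-}<1-\delta$ with $\delta=\sum_{i>r}\bar{x}_i$, transfers this bound to $j^+$ through the slack ordering, and then checks by an explicit index computation (using $a_i\le b(C)+u_{j^+}$ for $r<i\le r''$) that the insert inequality is violated by more than $\epsilon$. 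In your language, the paper is wedging the number $1-\delta$, which is precisely the right derivative (a subgradient) of your convex function $g$ at $\beta_0=b(C)$, between the two secant slopes: $B/u_{j^-}\le 1-\delta\le A/u_{j^+}$. You instead compare the two secants directly via the three-point convexity inequality, which eliminates both this intermediate quantity and all the index bookkeeping ($r'\le r\le r''$); the price is that the degenerate case must be handled by a separate argument, and your argument there is, in substance, the paper's case (i). What your route buys: conceptual transparency --- it becomes clear why $\bar{s}_j/u_j$ is the right quantity to sort, since it is compared against secant slopes of one fixed convex function --- and it makes manifest that divisibility of the $a_i$ is never used, a fact the paper records only in a subsequent remark. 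What the paper's route buys: a single claim covering both the degenerate and nondegenerate cases, and a strict violation in its case (ii). Two small points to make explicit in a final write-up: integrality of $b$ and the $u_j$ guarantees $b(C\setminus\{j^-\})\ge 1=a_0$ in your main case, so the delete inequality and its index $r'$ are genuinely well defined there; and $\delta<1$ follows already from $0<\epsilon(C)\le g(\beta_0)\le(1-\delta)\beta_0$ with $\beta_0>0$, so the appeal to Theorem \ref{trivialresult}(ii) via Observation \ref{continteq} can be replaced by this one-line argument.
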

	\begin{proof}
		The statement is clearly true if the violation of the $ (N_0, C \backslash \{j^-\}) $-partition inequality is at least $ \epsilon $. Otherwise, let $ \xi $ be the difference of the left hand side and the right hand side of the $ (N_0,C)$-partition inequality, i.e.,
		\begin{equation}
		\xi =  \bar{s}_0+\bar{s}(C) + \sum\limits_{i=1}^{r} a_i \bar{x}_i + b(C)\sum\limits_{i=r+1}^{n} \bar{x}_i   - b(C) (= -\epsilon < 0),
		\end{equation}
		where $ r \in N_0 $ satisfying $ a_{r} \leq b(C) < a_{r+1} $.
		We are left with the following two cases:
		\begin{itemize}
			\item [(i)] $ b(C) \leq  u_{j^-} $;
			\item [(ii)] $ b(C) >  u_{j^-} $ and $ -\xi >  -\xi',   $ where
			 \begin{equation}
			 \xi' =  \bar{s}_0+\bar{s}(C\rev{\backslash \{j^-\}}) + \sum\limits_{i=1}^{r'} a_i \bar{x}_i + (b(C)-u_{j^-})\sum\limits_{i=r'+1}^{n} \bar{x}_i   - (b(C)-u_{j^-}).
			 \end{equation}
			 Here $ r' \in N_0 $ satisfies $ a_{r'} \leq b(C)-u_{j^-} < a_{r'+1}  $.
		\end{itemize}
		We shall complete the proof by showing that in both cases, the $ (N_0, C \cup \{j^+\}) $-partition inequality is violated by $ (\bar{x}, \bar{s}) $ by at least $ \epsilon $. We proceed with the following claim.
		\vspace{0.2cm}
		\begin{myclaim}
		\label{claim1}
		$ u_{j^-}\bigg (\sum\limits_{i=r+1}^n\bar{x}_i -1\bigg  ) + \bar{s}_{j^-} < 0$.
		\end{myclaim}
		{\emph{Proof of Claim \rm{\ref{claim1}}}}. We prove the claim by treating the two cases (i) and (ii) separately.
		Suppose that (i) holds. Since $ (\bar{x}, \bar{s}) \in X_L (b) $, it follows that $ \bar{x}_i \geq 0 $ for $ i \in N $ and $ \bar{s}_j \geq 0 $ for $ j \in M\cup \{0\} $. This, together with the fact that the inequality \eqref{coefineq} is violated by the point $ (\bar{x}, \bar{s}) \in X_L (b) $, implies that
		\begin{equation}
		\label{contineq1}
		\sum_{i=r+1}^n\bar{x}_i -1 < 0.
		\end{equation}
		In addition, we have that
		\begin{equation*}
		\begin{aligned}
		0 > \xi &  = & & \bar{s}_0+ \bar{s}(C) +  \sum\limits_{i=1}^{r} a_i \bar{x}_i + b(C)\sum\limits_{i=r+1}^{n} \bar{x}_i   - b(C) \\
		 & \geq  & & b(C)\sum_{i=r+1}^{n} \bar{x}_i - b(C) + \bar{s}_{j^-}.
		\end{aligned}
		\end{equation*}	
		Then it follows from \eqref{contineq1} and $  b(C) \leq u_{j^-} $ that
		\begin{equation*}
		\bar{s}_{j^-} < b(C) \bigg(1- \sum_{i=r+1}^{n} \bar{x}_i\bigg) \leq  u_{j^-}\bigg(1- \sum_{i=r+1}^{n} \bar{x}_i\bigg).
		\end{equation*}
		Hence the claim follows. Now consider the case (ii). In this case, noticing that from $ a_r \leq b(C) < a_{r+1} $ and $ a_{r'} \leq b(C)- u_{j^-} < a_{r'+1} $, we have $ r' \leq r $. Then
		\begin{equation}\label{claim22}
		0 >\xi - \xi'  = \sum_{i=r'+1}^{r}(a_i -
		b(C)+u_{j^-})\bar{x}_i + u_{j^-}\bigg (\sum_{i=r+1}^n\bar{x}_i -1 \bigg ) + \bar{s}_{j^-}.
		\end{equation}
		Since $ a_i - b(C) + u_{j^-} \geq   a_{r'+1}- b(C) + u_{j^-} > 0 $ and $ \bar{x}_i \geq 0 $ for $ i=r'+1, \ldots, r $, the relation (\ref{claim22}) implies the truth of the claim for case (ii). So the claim is always true.\vspace{0.2cm}\\
		Now, combining Claim \ref{claim1} and the assumption that $ \frac{\bar{s}_{j^+}}{u_{j^+}} \leq \frac{\bar{s}_{j^-}}{u_{j^-}} $, yields
		\begin{equation}
		\label{theoremineq1}
		\frac{\bar{s}_{j^+}}{u_{j^+}} \leq \frac{\bar{s}_{j^-}}{u_{j^-}} < 1 - \sum_{i=r+1}^n \bar{x}_i.				
		\end{equation}
		This is further equivalent to
		\begin{equation}
		\label{theoremineq2}
		u_{j^+}\bigg(\sum_{i=r+1}^n\bar{x}_i -1 \bigg) + \bar{s}_{j^+} < 0.
		\end{equation}
		Let $r'' \in N_0 $ such that $ a_{r''} \leq b(C) + u_{j^+} < a_{r''+1}$. Then $ r'' \geq r $. Similarly, denote
		\begin{equation*}
			\xi'' =  \bar{s}_0+\bar{s}(C\cup \{j^+\}) +   \sum\limits_{i=1}^{r''} a_i \bar{x}_i + (b(C) + u_{j^+})\sum\limits_{i=r''+1}^{n} \bar{x}_i - (b(C) + u_{j^+}).
		\end{equation*}
		Comparing $ \xi $ and $ \xi'' $, we have
		\begin{equation}
		\label{contineq4}
		\xi'' - \xi  = \sum_{i=r+1}^{r''}(a_i -
		b(C) - u_{j^+})\bar{x}_i + u_{j^+}\bigg(\sum_{i=r+1}^n\bar{x}_i -1 \bigg) + \bar{s}_{j^+} < 0.
		\end{equation}
		where the last inequality comes from \eqref{theoremineq2} and the fact that $ a_i - b(C) - u_{j^+} \leq  a_{r''} - b(C) - u_{j^+}\leq 0 $ and $ \bar{x}_i\geq 0  $ for $ i=r+1, \ldots, r'' $.
		Thus, the $ (N_0, C \cup \{j^+\}) $-partition inequality gives the violation $ -\xi'' >-\xi= \epsilon $ and the statement is true.\qed
	\end{proof}
	
	\begin{myremark}
		In the proof of Theorem {\rm\ref{cont}}, we do not use the divisible property of the coefficients $ a_i $, $ i=1, \ldots, n $. Therefore, The result in Theorem {\rm\ref{cont}} can be extended to the set $ X(b) $ with arbitrary positive coefficients $ a_i $, $ i=1, \ldots, n $.
	\end{myremark}

	\subsection{\bf The selection of continuous variables in the partition inequality with any partition}
	
	If fixing  the specific partition $ \{N_0\} $, Theorem \ref{cont} shows that in order to find a violated partition inequality, comparing to the variable $ s_{j^-} $, it is better to keep the variable $ s_{j^+} $ in the inequality \eqref{coefineq}. Next we shall show that this result can further be generalized to the partition inequality \eqref{partitionineq} with any partition.
	Before proceeding it, we illustrate the procedure of Algorithm \ref{intparseparalgo} for a fixed subset $ C $. Given a point $ (\bar{x}, \bar{s}) \in X_L(b) $, fixing the subset as $ C $, using Algorithm \ref{intparseparalgo}, it will generate the partition $ \Pi $. Let $ (x^h, \alpha^h, \gamma^h, b^h, r^h, v^h, \omega^h ) $, $ h=1,\ldots, k $, be the values which have been constructed in Algorithm \ref{intparseparalgo} terminating at the $ k $-th iteration where $ x^1 = \bar{x} $, $ b^1 = b(C)$, $ r^h $ satisfies $ a_{r^h} \leq b^h < a_{r^{h}+1} $, $ \delta^h $, $ \kappa^h $, and $ \omega^h $ are defined by
	\begin{equation}
	\label{intparsepaineqproof}
	\delta^h = \sum_{i=r^h+1}^n x^h_i,  \  \kappa^h = \bigg \lfloor \frac{b^h}{a_{r^h}} \bigg \rfloor +1, \ \omega^h = (\kappa^h -1)a_{r^h},
	\end{equation}
	$ v^h $ satisfies
	\begin{equation}
	\label{vcondproof}
	\sum_{i=v^h+1}^{r^h} a_ix^h_i < \omega^h(1-\delta^h) \ \text{and} \  \sum_{i=v^h}^{r^h} a_ix^h_i \geq \omega^h(1-\delta^h),  	
	\end{equation}
 $ \alpha^h $ and $ \gamma^h $ are defined by
	\begin{equation}
	\label{alphagammadefproof}
	\alpha^h_i = \left\{
	\begin{array}{ll}
	0,  &   i<v^h; \\
	\frac{\omega^h (1-\delta^h)-\sum_{i=v^h+1}^{r^h}a_i x^h_i}{a_{v^h}}, & i=v^h;\\
	x^h_i, & i > v^h
	\end{array}
	\right. \text{and} \ \
	\gamma^h_i = \left\{
	\begin{array}{ll}
	x^h_i - \alpha^h_i,  &   i\leq r^h; \\
	x^h_i, & i > r^h $ $  $ $
	\end{array}
	\right.
	\end{equation}
	for $ h=1,\ldots,k $, and
	$ b^{h+1} = b^h - \omega^h $ and $ x^{h+1} = \gamma^h $ for $ h=1,\ldots,k-1 $. Notice that from Remark \ref{remark1}, here we can assume that $ \bar{x}^h_0 = \bar{s}_0 + \bar{s}(C) $ for $ h =1, \ldots, k $. By definition, the following formula can easily be verified.
	\begin{myobservation}
		\label{bCformula}
		$ b(C) = b^t +   \sum\limits_{h=1}^{t-1} (\kappa^h -1)a_{r^h} $  for any $t =1, \ldots, k$.
	\end{myobservation}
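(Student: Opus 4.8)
The plan is to establish the formula by a simple induction on $t$ that exploits the telescoping structure of the sequence $\{b^h\}$ generated by Algorithm \ref{intparseparalgo}. The two facts I would rely on are the initialization $b^1 = b(C)$ (which holds because the algorithm is applied to $X(b)$ with the subset fixed to $C$, so the starting right-hand side is $b(C)$, as recorded in the text preceding the observation) and the update rule at step \ref{paralgo:recompstep}, namely $b^{h+1} = b^h - \omega^h$ for $h = 1, \ldots, k-1$, together with the definition $\omega^h = (\kappa^h - 1)a_{r^h}$ from \eqref{intparsepaineqproof}.

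First I would verify the base case $t = 1$: the sum on the right-hand side is empty, so it reduces to $b^1 = b(C)$, which is exactly the initialization. For the inductive step, I would assume that $b(C) = b^t + \sum_{h=1}^{t-1}(\kappa^h - 1)a_{r^h}$ holds, and rewrite the update rule as $b^t = b^{t+1} + \omega^t = b^{t+1} + (\kappa^t - 1)a_{r^t}$. Substituting this expression for $b^t$ into the inductive hypothesis folds the new term into the summation and yields $b(C) = b^{t+1} + \sum_{h=1}^{t}(\kappa^h - 1)a_{r^h}$, which is precisely the claimed identity at $t+1$. This closes the induction and establishes the formula for every $t = 1, \ldots, k$.

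There is no genuine obstacle here: the observation is a direct telescoping of the recurrence $b^{h+1} = b^h - \omega^h$, and the cumulative amount subtracted from $b(C) = b^1$ in order to reach $b^t$ is exactly $\sum_{h=1}^{t-1}\omega^h = \sum_{h=1}^{t-1}(\kappa^h - 1)a_{r^h}$. The only point requiring mild care is matching the index range of the summation to the off-by-one between the superscript $t$ on $b^t$ and the number $t-1$ of subtractions that have been carried out; keeping the empty-sum convention for $t=1$ handles this cleanly.
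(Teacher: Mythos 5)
Your proof is correct and coincides with what the paper intends: the paper offers no explicit proof, stating only that the formula ``can easily be verified'' by definition, and your telescoping induction on the recurrence $b^{h+1} = b^h - \omega^h$ with $\omega^h = (\kappa^h - 1)a_{r^h}$ and the initialization $b^1 = b(C)$ is exactly that verification. Nothing further is needed.
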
	
	
	We now consider the difference of generated partitions by Algorithm  \ref{intparseparalgo} for different subsets of $ M $. Consider another subset $ C' \subseteq M $ with $ C' \neq C $. Assume that $ a_{r^0} = + \infty $. Let $ q \leq k $  be the maximal integer such that
	\begin{equation}
	\label{sameparcond}
	b(C') = \sum\limits_{h=1}^{q-1} (\kappa^h -1)a_{r^h} + \theta \ \text{ and } \  0 \leq \theta < a_{r^{q-1}}.
	\end{equation}
	Define a new partition of $ N_0 $ as
	\begin{equation}
	\label{partition2}
	\begin{array}{ll}
	\Pi' = \big \{ & \{i_1=0,i_{1} +1 ,\ldots,j_{\tau} \}, \\
	&\{i_{\tau+1}=j_{\tau}+1,i_{\tau+1}+1,\ldots,j_{\tau+1}\}, \\
	&\qquad \vdots\\
	&\{i_p = j_{p-1}+1,i_p+1,\ldots,j_p = n \} \ \big\},
	\end{array}
	\end{equation}
	where \rev{$ \tau = \max \{ g:i_g < r^{q-1}, g =1, \ldots, p \} $}. Notice that by \eqref{newpartition}, in Algorithm \ref{intparseparalgo}, the partition may change as $ h $ grows. For notation convenience, we assume that at the $ 0 $-th iteration, the partition is $ \{ \{0,1,\ldots, n \}\}.$
	
	\begin{mylemma}
		\label{lemmasamepar}
     Assume that Algorithm {\rm{\ref{intparseparalgo}}} generates the partition $\Pi$ after $k$ iterations when fixing the subset of $ M $ as $C$.
   Consider another subset $ C' \subseteq M $ with $ C' \neq C $ and let $q\leq k$ be the maximal integer such that \eqref{sameparcond} holds. Then, if the subset of $ M $ is fixed as $C'$, Algorithm {\rm{\ref{intparseparalgo}}} yields the partition $ \Pi' $ defined in \eqref{partition2} at the $ (q-1) $-th iteration. Specially, if $ q = k $, $ \Pi' $ and $ \Pi $ are the same.
	\end{mylemma}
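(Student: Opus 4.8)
The plan is to compare the two executions of Algorithm~\ref{intparseparalgo} started from the capacities $b(C)$ and $b(C')$ and to prove, by induction on the iteration index, that the two runs are indistinguishable during their first $q-1$ iterations; the truncation encoded by $\Pi'$ in \eqref{partition2} will then be exactly the portion of $\Pi$ that is already frozen within those iterations. Writing $\tilde b^h,\tilde r^h,\tilde\kappa^h,\tilde\omega^h,\tilde v^h,\tilde x^h$ for the quantities produced by the $C'$-run, the central intermediate statement is the following \textbf{digit-matching claim}: for every $h=1,\dots,q-1$,
\begin{equation*}
\tilde b^h=\sum_{l=h}^{q-1}\omega^l+\theta,\qquad \tilde r^h=r^h,\quad \tilde\kappa^h=\kappa^h,\quad \tilde\omega^h=\omega^h,\quad \tilde v^h=v^h,\quad \tilde x^h_i=x^h_i\ (i\ge 1).
\end{equation*}
By Remark~\ref{remark1} the component $x^h_0$ never enters $r^h,\delta^h,\kappa^h,\omega^h,v^h$ nor the $i\ge1$ part of the decomposition \eqref{alphagammadefproof} (all the relevant sums run over indices $\ge v^h\ge 1$ while $v^h\ge1$), so the whole trajectory $(\tilde b^h,\tilde x^h)$ is governed by the single scalar $\tilde b^1=b(C')$ together with $\bar x$, and the claim reduces to matching the residuals.

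The heart of the argument is the base step $h=1$, which I would settle by divisibility. From \eqref{sameparcond}, $\tilde b^1=b(C')=\omega^1+R$ with $R=\sum_{l=2}^{q-1}\omega^l+\theta$. Each $\omega^l=(\kappa^l-1)a_{r^l}$ with $l\le q-1$ is a multiple of $a_{r^{q-1}}$, and by Observation~\ref{bCformula} applied to the $C$-run one has $\sum_{l=2}^{q-1}\omega^l=b^2-b^q\le b^2<a_{r^1}$; being a multiple of $a_{r^{q-1}}$ strictly below $a_{r^1}$, this sum is at most $a_{r^1}-a_{r^{q-1}}$, and adding $\theta<a_{r^{q-1}}$ yields $0\le R<a_{r^1}$. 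Since $\omega^1$ is a positive multiple of $a_{r^1}$ with $\omega^1\ge a_{r^1}$ and $\omega^1+a_{r^1}=\kappa^1 a_{r^1}\le a_{r^1+1}$, the representation $\tilde b^1=\omega^1+R$ with $0\le R<a_{r^1}$ forces $a_{r^1}\le\tilde b^1<a_{r^1+1}$ and $\lfloor\tilde b^1/a_{r^1}\rfloor=\kappa^1-1$, i.e. $\tilde r^1=r^1$, $\tilde\kappa^1=\kappa^1$, $\tilde\omega^1=\omega^1$. Feeding $\tilde b^2=\tilde b^1-\omega^1=R=\sum_{l=2}^{q-1}\omega^l+\theta$ back in runs the same statement with $q$ replaced by $q-1$, which drives the induction; matching $\tilde\delta^h$ and $\tilde v^h$ through \eqref{vcondproof}, and then $\tilde x^{h+1}_i=x^{h+1}_i$ through \eqref{alphagammadefproof}, is immediate once $\tilde r^h=r^h$, $\tilde\omega^h=\omega^h$ and $\tilde x^h_i=x^h_i$ are known.

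With the digit-matching claim in hand, I would pass to the partitions. A block is frozen at iteration $h$ exactly when $a_{v^h}>b^{h+1}$, equivalently $v^h>r^{h+1}$, and the update \eqref{newpartition} then splits off $\{v^h,\dots,\bar n\}$ while keeping $\{0,\dots,v^h-1\}$ active. For $h\le q-2$ we have $\tilde r^{h+1}=r^{h+1}$, so both runs take identical split decisions and carry identical partitions through iteration $q-2$. Using the monotonicity $r^1>r^2>\cdots$ (which follows from $b^{h+1}=b^h\bmod a_{r^h}<a_{r^h}$) together with $v^h>r^{h+1}\ge r^{q-1}$ for every split iteration $h\le q-2$, I would show that every block frozen so far has left end $>r^{q-1}$, hence is one of the high blocks $\{i_{\tau+1},\dots,j_{\tau+1}\},\dots,\{i_p,\dots,n\}$ of $\Pi$, while the surviving active block is $\{0,\dots,j_\tau\}$ with $\tau=\max\{g:i_g<r^{q-1}\}$; this is precisely $\Pi'$. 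The special case $q=k$ then drops out, since the $C$-run performs no freezing at its terminating iteration, so $\Pi'$ coincides with the full partition $\Pi$.

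The step I expect to be the main obstacle is exactly this last bookkeeping at iteration $q-1$, where the two runs first diverge: the $C'$-residual $\tilde b^q=\theta$ need not lie in the same interval $[a_{r^q},a_{r^q+1})$ as the $C$-residual $b^q$, so the split test $v^{q-1}>\tilde r^q$ may differ from $v^{q-1}>r^q$, and moreover $v^{q-1}$ can be strictly smaller than $r^{q-1}$. I would therefore argue carefully that, under the reading of ``the partition at the $(q-1)$-th iteration'' fixed by the $0$-th-iteration convention of \eqref{partition2}, the block with left end exactly $r^{q-1}$ (when it occurs) and the disposition of iteration $q-1$ are accounted for precisely by the cut $\tau=\max\{g:i_g<r^{q-1}\}$, so that the standing partition is $\Pi'$; verifying this boundary identity, rather than the residual matching, is where the real work lies.
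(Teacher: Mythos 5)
Your overall route is the same as the paper's: use Remark \ref{remark1} (for $h\le k-1$ one has $v^h\ge 1$, so the $0$-component is inert in everything except the stopping tests) together with the decomposition \eqref{sameparcond} to argue that the two runs produce identical $(r^h,\kappa^h,\omega^h,v^h)$ and identical $x^h_i$, $i\ge 1$, for $h=1,\dots,q-1$. Your ``digit-matching'' claim, with its divisibility proof ($\sum_{l=2}^{q-1}\omega^l$ is a multiple of $a_{r^{q-1}}$ lying below $a_{r^1}$, hence $R<a_{r^1}$, hence $\tilde r^1=r^1$, $\tilde\kappa^1=\kappa^1$, and so on recursively), is a rigorous rendering of what the paper disposes of in one sentence, and your reformulation of the split test at iteration $h\le q-2$ as $v^h>r^{h+1}$ is correct. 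Up to iteration $q-2$ your argument is sound and more careful than the paper's own.

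The step you postpone, however --- that the partition standing at iteration $q-1$ of the $C'$-run is exactly $\Pi'$ --- is a genuine gap, and it cannot be closed in the form you plan, because the identity is false. The $C'$-run splits at iteration $q-1$ exactly when $a_{v^{q-1}}>\theta$, whereas $\Pi'$ contains a block with left end $\le r^{q-1}$ only when $v^{q-1}=r^{q-1}$; when $v^{q-1}<r^{q-1}$ and $\theta<a_{v^{q-1}}$ the $C'$-run makes an extra split that $\Pi'$ does not have. Concretely, take $n=3$, $(a_1,a_2,a_3)=(2,4,16)$, $m=1$, $u_1=5$, $b=22$, $\bar x=(0,2.5,0.5)$, $\bar s=(\bar s_0,\bar s_1)=(0,4)$, $C=\{1\}$, $C'=\varnothing$, so $b(C)=22$, $b(C')=17$. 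The $C$-run (with $x_0=4$) has $r^1=3$, $\omega^1=16$, $v^1=2$, $b^2=6$, makes no split since $a_2=4\le 6$, and terminates at $k=2$ with $\Pi=\{N_0\}$; then \eqref{sameparcond} gives $q=2=k$, $\theta=1$, so $\Pi'=\Pi=\{N_0\}$. Yet the $C'$-run (with $x_0=0$), after matching $r^1,\omega^1,v^1$ exactly as you prove, tests $a_{v^1}=4>\tilde b^2=1$, applies \eqref{newpartition}, and ends iteration $q-1=1$ with $\{\{0,1\},\{2,3\}\}\ne\Pi'$; the discrepancy is material, since the $(\Pi',C')$-inequality $s_0+2x_1+4x_2+16x_3\ge 17$ holds at this point with slack $1$ while the inequality from the partition actually generated, $s_0+x_1+x_2+4x_3\ge 5$, is violated by $1/2$. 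Reading ``at the $(q-1)$-th iteration'' as ``before the split update'' does not rescue the claim either: with $\bar x_2=0$, $\bar x_3=1.5$ instead, one gets $v^1=r^1=3$, both runs split at iteration $1$, and $\Pi'=\{\{0,1,2\},\{3\}\}$ contains the block $\{3\}$ that the pre-split partition lacks. So your instinct located exactly the weak point, but the ``real work'' you deferred is impossible as stated; note that the paper's own proof is silent on precisely this issue (it asserts the conclusion right after the value-matching), so the lemma --- and Proposition \ref{twopartition}, which inherits the defect --- really needs a weaker formulation, e.g.\ that the $C'$-partition agrees with $\Pi$ on all blocks with left endpoint greater than $r^{q-1}$ and differs from $\Pi'$ by at most one extra split at $v^{q-1}$; this weaker statement is all that Theorem \ref{finalresult} actually uses, since there any violated partition inequality given by $C'$ suffices.
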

	\begin{proof}
		From Remark \ref{remark1}, we have $ v^h \geq 1 $ for $ h =1, \ldots,k-1 $. While applying Algorithm {\rm{\ref{intparseparalgo}}} with the subset $C'$, although the value of $ \bar{x}_0 $ is changed to $ \bar{s}_0 + \bar{s}(C') $, the case \eqref{vcondproof} is still satisfied for $ h=1, \ldots, q-1 $. Furthermore, by \eqref{sameparcond}, we know that except $ x^h_0 $, $ \alpha^h_0 $, and $ \gamma^h_0 $, the same values $ (x^h, \alpha^h, \gamma^h, r^h, v^h, \omega^h ) $ will be obtained for $ h=1, \ldots, q-1 $. This indicates that the partition $ \Pi' $ will be generated
at the $ (q-1) $-th iteration if applying Algorithm {\rm{\ref{intparseparalgo}}} with the subset as $C'$.
		Therefore, the statement follows.
		\qed
	\end{proof}
	

Lemma \ref{lemmasamepar} reveals some invariance property of the partitions generalized by Algorithm
 {\rm{\ref{intparseparalgo}}} with different subsets of $M$. Combining this lemma with Theorem \ref{nontrivialresult}, we have the following result.
	\begin{mypro}
			\label{twopartition}
			Suppose the violation of $ (\Pi, C) $-partition inequality is $ \epsilon $.
			Let $ C' \subseteq M $ and $ q \leq k  $ be the maximal integer such that \eqref{sameparcond} holds.
			For $ X(b^q + u(M \backslash C)) $, if the $ (N_0, C') $-partition inequality is violated by the point $ (x^q, \bar{s}) $ by $ \epsilon' \geq \epsilon $, then for $ X(b) $, the violation of $ (\Pi', C') $-partition \rev{inequality} is also $ \epsilon' $ \rev{where $ \Pi' $ is defined in \eqref{partition2}}.
	\end{mypro}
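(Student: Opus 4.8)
The plan is to transport the statement from the continuous sets $X(\cdot)$ to the integer sets $Z(\cdot)$ via Observation \ref{continteq}, and then to run the decomposition of Theorem \ref{nontrivialresult} ``in reverse,'' lifting a violation of the $N_0$-partition inequality at the reduced level up to a violation of the $\Pi'$-partition inequality at the top level. Concretely, writing $\hat{x}'_0=\bar{s}_0+\bar{s}(C')$ and $\hat{x}'_i=\bar{x}_i$, Observation \ref{continteq} says that the $(\Pi',C')$-partition inequality \eqref{partitionineq} for $X(b)$ is violated at $(\bar{x},\bar{s})$ by exactly the amount by which the $\Pi'$-partition inequality \eqref{intpartitionineq} for $Z(b(C'))$ is violated at $\hat{x}'$; likewise the $(N_0,C')$-partition inequality for $X(b^q+u(M\backslash C))$ is violated at $(x^q,\bar{s})$ by exactly the amount by which the $N_0$-partition inequality \eqref{npartition} for $Z(\theta)$ is violated at $x^q$ with its $0$-th coordinate reset to $\bar{s}_0+\bar{s}(C')$, where $\theta$ is the reduced capacity. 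It therefore suffices to argue entirely inside the integer world.

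First I would pin down the reduced capacity. Since $b(C')=\sum_{h=1}^{q-1}(\kappa^h-1)a_{r^h}+\theta$ by \eqref{sameparcond} and $\omega^h=(\kappa^h-1)a_{r^h}$, running Algorithm \ref{intparseparalgo} with the subset $C'$ produces $b^{(C')}_1=b(C')$ and $b^{(C')}_{h+1}=b^{(C')}_h-\omega^h$, so that $b^{(C')}_q=b(C')-\sum_{h=1}^{q-1}\omega^h=\theta$; one also checks $\theta=(b^q+u(M\backslash C))-u(M\backslash C')$, i.e. $\theta$ is precisely the reduced capacity of $C'$ in $X(b^q+u(M\backslash C))$. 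Next, Lemma \ref{lemmasamepar} guarantees that the $C'$-run reproduces the same intermediate data $(x^h,\alpha^h,\gamma^h,r^h,v^h,\omega^h)$ as the $C$-run for $h=1,\ldots,q-1$ (only the $0$-coordinates differ, and by Remark \ref{remark1} they stay equal to $\bar{s}_0+\bar{s}(C')$), and that the partition it has built by its $(q-1)$-th iteration is exactly $\Pi'$ in \eqref{partition2}. In particular the point fed into iteration $q$ of the $C'$-run is $x^q$ (with $0$-coordinate $\bar{s}_0+\bar{s}(C')$) and the reduced problem there is $Z(\theta)$.

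With these identifications, the core is an induction applying the ``moreover'' part of Theorem \ref{nontrivialresult} for $h=q-1,q-2,\ldots,1$. Starting from the hypothesis that the $N_0$-partition inequality for $Z(b^{(C')}_q)=Z(\theta)$ is violated at $x^q$ by $\epsilon'$, each step lifts a violation by $\epsilon'$ of the current partition inequality for $Z(b^{(C')}_{h+1})$ to a violation by the same $\epsilon'$ for $Z(b^{(C')}_h)$, the partition either staying put (case (i), $a_{v^h}\le b^{(C')}_{h+1}$) or acquiring one extra split at $v^h$ (case (ii), $a_{v^h}>b^{(C')}_{h+1}$). Since the case distinctions and the split indices $v^h$ are dictated by the shared data from Lemma \ref{lemmasamepar}, the partition resulting from these $q-1$ lifts is exactly the one the $C'$-run holds at its $(q-1)$-th iteration, namely $\Pi'$. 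Hence the $\Pi'$-partition inequality for $Z(b(C'))$ is violated at $\hat{x}'$ by $\epsilon'$, and pushing this back through Observation \ref{continteq} gives the claimed violation $\epsilon'$ of the $(\Pi',C')$-partition inequality for $X(b)$.

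The delicate step is matching the partition obtained by the reverse lifting with $\Pi'$ of \eqref{partition2}, which is really an issue of orientation: Algorithm \ref{intparseparalgo} builds its partition forward by repeatedly splitting the lowest block $\{0,\ldots,\bar{n}\}$, whereas case (ii) of Theorem \ref{nontrivialresult} splits the highest block $\{i_p,\ldots,n\}$. I would need to verify (as is already implicit in Lemma \ref{lemmasamepar}) that the two constructions accumulate to the same multi-block partition, and that the condition $i_p<v$ required in Theorem \ref{nontrivialresult}(ii) holds at each lift, which follows from the successful $C$-run together with the fact that the split index at iteration $h$ lies strictly inside the block being refined. I note that the hypothesis $\epsilon'\ge\epsilon$ is not actually needed for the preservation of the violation — the lifting carries any positive violation across unchanged — and serves only to situate this proposition within the search over subsets of $M$.
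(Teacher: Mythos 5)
Your proof is correct and takes essentially the same route the paper intends: the paper gives no explicit proof of Proposition \ref{twopartition}, saying only that it follows by combining Lemma \ref{lemmasamepar} with Theorem \ref{nontrivialresult}, and your argument---transporting everything to the integer sets via Observation \ref{continteq}, using Lemma \ref{lemmasamepar} to identify the $C'$-run's reduced capacity $\theta$, shared data, and partition $\Pi'$ at iteration $q-1$, then lifting the violation unchanged through repeated application of the ``moreover'' part of Theorem \ref{nontrivialresult}---is precisely that combination, worked out in full. Your two side remarks are also accurate: the forward (lowest-block) splits of Algorithm \ref{intparseparalgo} and the backward (highest-block) splits of Theorem \ref{nontrivialresult}(ii) accumulate the same split points and hence the same partition, and the hypothesis $\epsilon' \geq \epsilon$ plays no role in the lifting itself.
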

	
	Proposition \ref{twopartition} gives the condition under which the subset $ C' $ $( C'\neq C) $ can give a partition inequality \eqref{partitionineq} whose violation is at least the same as the one by the subset $ C $. We are now ready to present the main result of this subsection, i.e., generalize Theorem \ref{cont} to the partition inequality \eqref{partitionineq} with any partition.
	\begin{theorem}
		\label{finalresult}
		Given a point $ (\bar{x}, \bar{s}) \in X_L (b) $,  let $ C \subseteq M  $ and $ j^+ $, $ j^- \in M $ such that $ \frac{\bar{s}_{j^+}}{u_{j^+}} \leq \frac{\bar{s}_{j^-}}{u_{j^-}}  $, $ j^- \in C $, and $ j^+ \notin C $. Suppose that there exists a partition inequality \eqref{partitionineq} given by $ C $ violated by the point $ (\bar{x}, \bar{s}) $ by $ \epsilon >0 $. Then there exists another violated partition inequality \eqref{partitionineq} given by $ C \cup \{j^+\} $ or $ C \backslash \{j^-\} $ and the violation is at least $ \epsilon $.
	\end{theorem}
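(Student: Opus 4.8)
The plan is to reduce the general statement to Theorem~\ref{cont} (the $N_0$-partition case) by running Algorithm~\ref{intparseparalgo} for the subset $C$ and then lifting the resulting inequality back through Proposition~\ref{twopartition}.

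First I would invoke Proposition~\ref{intparmostviolated} together with Observation~\ref{continteq}: applying Algorithm~\ref{intparseparalgo} to the fixed subset $C$ (on the transformed point $\hat x$ with $\hat x_0=\bar s_0+\bar s(C)$) produces the most violated partition $\Pi$ for $C$, whose violation $\epsilon_C$ is at least the given $\epsilon$, so it suffices to work with this most violated partition. I would record the decomposition data $(x^h,\alpha^h,\gamma^h,b^h,r^h,v^h,\omega^h)$, $h=1,\dots,k$, noting that the run terminates at step~\ref{stopandfind3} with $v^k=0$ and that $x^h_0=\bar s_0+\bar s(C)$ for all $h$ by Remark~\ref{remark1}. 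By Proposition~\ref{anothertrivialresult}, at the terminal level the full violation $\epsilon_C$ is carried by the $N_0$-partition inequality \eqref{npartition} for $Z(b^k)$ evaluated at $x^k$; reading Observation~\ref{continteq} backwards, this is exactly the $(N_0,C)$-partition inequality \eqref{coefineq} for the base continuous set $X(b^k+u(M\setminus C))$ evaluated at $(x^k,\bar s)$.

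Next I would apply Theorem~\ref{cont} to this base set $X(b^k+u(M\setminus C))$, the point $(x^k,\bar s)$, the subset $C$, and the indices $j^+,j^-$. The hypotheses $\tfrac{\bar s_{j^+}}{u_{j^+}}\le\tfrac{\bar s_{j^-}}{u_{j^-}}$, $j^-\in C$, $j^+\notin C$ are inherited verbatim, and the $(N_0,C)$-partition inequality is violated there by $\epsilon_C>0$. One point needs care: $(x^k,\bar s)$ generally does not satisfy the knapsack constraint of $X(b^k+u(M\setminus C))$, only $0\le\bar s_j\le u_j$ and $x^k\ge0$. However, the proof of Theorem~\ref{cont} uses the evaluated point only through the nonnegativity of its coordinates (together with the assumed violation), so it applies unchanged. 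This yields a winning subset $C'\in\{C\cup\{j^+\},\,C\setminus\{j^-\}\}$ for which the $(N_0,C')$-partition inequality over the base set is violated by at least $\epsilon_C$.

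Finally I would lift this base violation back to $X(b)$ using Lemma~\ref{lemmasamepar} and Proposition~\ref{twopartition}: with $q$ the maximal index satisfying \eqref{sameparcond} for $C'$ and $\Pi'$ the partition \eqref{partition2}, the $(\Pi',C')$-partition inequality for $X(b)$ inherits the violation, giving a violated inequality of \eqref{partitionineq} for $C'$ with violation at least $\epsilon_C\ge\epsilon$. I expect the main obstacle to be exactly this last reconciliation of levels: Theorem~\ref{cont} delivers the base violation at the terminal level $k$, whereas Proposition~\ref{twopartition} consumes it at the level $q$ dictated by \eqref{sameparcond}, and in general $q<k$ (for instance, for $C'=C\cup\{j^+\}$ one has $q=k$ only when $b^k+u_{j^+}<a_{r^{k-1}}$, and for $C'=C\setminus\{j^-\}$ only when $b^k\ge u_{j^-}$). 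To close this gap I would show that the $N_0$-level violation for $C'$ is preserved between levels $q$ and $k$: the decompositions for $C$ and $C'$ share their first $q-1$ reduction steps by Lemma~\ref{lemmasamepar}, the violation is invariant along the recursion of Theorem~\ref{nontrivialresult}, and what remains is the boundary bookkeeping of whether the added weight $u_{j^+}$ (or removed weight $u_{j^-}$) pushes the residual capacity $b^q\pm u_{j^\pm}$ across the threshold $a_{r^{q-1}}$.
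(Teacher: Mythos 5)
Your outline follows the paper's own route almost step for step: run Algorithm \ref{intparseparalgo} for the fixed subset $C$, use Remark \ref{remark1}, Theorem \ref{nontrivialresult} and Observation \ref{continteq} to see that the whole violation is carried at the terminal level $k$ by the $(N_0,C)$-partition inequality for $X(b^k+u(M\backslash C))$ at $(x^k,\bar{s})$, apply Theorem \ref{cont} there to obtain a winning subset $C'\in\{C\cup\{j^+\},\,C\backslash\{j^-\}\}$, and lift back with Lemma \ref{lemmasamepar} and Proposition \ref{twopartition}. You also correctly locate the only place where this plan does not automatically close: Proposition \ref{twopartition} consumes a violated $(N_0,C')$-inequality at the level $q$ dictated by \eqref{sameparcond}, while Theorem \ref{cont} delivers one at level $k$, and for $C'=C\cup\{j^+\}$ one may have $q<k$ (for $C'=C\backslash\{j^-\}$ the mismatch never occurs, since $0<b^k-u_{j^-}<b^k<a_{r^{k-1}}$ forces $q=k$).

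However, the mechanism you propose for bridging levels $q$ and $k$ --- ``the violation is invariant along the recursion of Theorem \ref{nontrivialresult}'' plus boundary bookkeeping --- would fail, and this bridge is precisely where the real content of the paper's proof lies. Theorem \ref{nontrivialresult} transfers violations only along the decomposition computed \emph{for the capacity at hand}; the recorded data $(x^h,\alpha^h,\gamma^h,r^h,v^h,\omega^h)$ form the decomposition for the capacities $b^h$ (subset $C$), not for the capacities $b^h+u_{j^+}$ (subset $C'$). By \eqref{arq-2}, for $t=q,\ldots,k-1$ one has $b^{t+1}+u_{j^+}\geq a_{r^{t}}$, so the index $r$ attached to the $C'$-capacity, and with it $\kappa$, $\omega$, $v$, $\alpha$, $\gamma$, differ from those recorded for $C$: the recursion for $C'$ genuinely diverges from the recorded one at level $q$, and there is no invariance statement to invoke. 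Indeed the violation is \emph{not} invariant between these levels; what the paper proves, by backward induction from $k$ down to $q$, is the one-sided estimate that the violation of the $(N_0,C\cup\{j^+\})$-inequality can only grow when passing from $(x^{t+1},\bar{s})$ at capacity $b^{t+1}+u_{j^+}$ to $(x^{t},\bar{s})$ at capacity $b^{t}+u_{j^+}$, i.e.\ the computation showing $\xi'-\xi\leq 0$, which requires Observation \ref{omegaequality}, the relations \eqref{mediaresult}, and the threshold bound $a_i\leq a_{r''}\leq b^{q+1}+u_{j^+}+\omega^{q}$. This induction is an analogue of the argument \emph{inside} Theorem \ref{cont}, not an application of Theorem \ref{nontrivialresult}, and it is the missing step in your proposal. (A minor further imprecision: the run for $C$ need not terminate in step \ref{stopandfind3}; it may stop in step \ref{stopandfind1} or \ref{stopandfind2}, but by Theorem \ref{trivialresult} the terminal $N_0$-inequality still carries the violation in those cases, so this is harmless.)
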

	\begin{proof}
	Fixing the subset as $ C $, suppose Algorithm \ref{intparseparalgo} terminates with a partition $\Pi$ and the corresponding $ (\Pi,C) $-partition inequality.  From Proposition \ref{intparmostviolated}, its violation at the point $ (\bar{x}, \bar{s}) $ is at least $ \epsilon $. Without loss of generality, we can assume that, at the point $ (\bar{x}, \bar{s})  $, the violation of $ the
(\Pi, C) $-partition inequality is exactly $ \epsilon $. Let $ (x^h, \alpha^h, \gamma^h, b^h, r^h, v^h, \omega^h ) $, $ h=1,\ldots, k $, be the values which have been constructed in Algorithm \ref{intparseparalgo} terminating with $  (x^k, \bar{s}) \notin X(b^k+u(M\backslash C)) $.
		 From Theorem \ref{nontrivialresult} and Observation \ref{continteq}, the $ (N_0, C) $-partition inequality for $ X(b^k+u(M\backslash C)) $ is
		\begin{equation}
		\label{tineq}
		s_0 +s(C) + \sum_{i=1}^{r^k}a_i x_i + b^k \sum_{i=r^k+1}^n x_i \geq  b^k
		\end{equation}
		and its violation at $  (x^k, \bar{s}) $ is $ \epsilon $.
		By Theorem \ref{cont}, either (i) the $ (N_0, C \backslash \{j^-\}) $-partition inequality or (ii) the $ (N_0, C\cup \{j^+\}) $-partition inequality is a violated inequality for $ X(b^k+u(M\backslash C)),$ where the violation is $ \epsilon' \geq \epsilon $ at the point $ (x^k, \bar{s}) $.
		\rev{By Theorem \ref{cont}, the statement is true if $ k =1 $.} Hence, in what follows, we assume that $ k \geq 2 $. Then from the fact that $ b^k = b^{k-1} - \omega^{k-1} $ and the definition of $ \omega^{k-1} $ in \eqref{intparsepaineqproof}, we have $ b^{k} <a_{r^{k-1}} $.
		
		We first consider the case that (i). Clearly, $ 0 <  b^k  - u_{j^-} < a_{r^{k-1}}    $. By Observation \ref{bCformula}, we have
		\begin{equation}
		b(C\backslash\{j^-\}) = b(C) - u_{j^-} =  \sum\limits_{h=1}^{k-1} (\kappa^h -1)a_{r^h} + b^k -u_{j^-}.
		\end{equation}
		Then it follows from Proposition \ref{twopartition}
		that the $ (\Pi, C\backslash\{j^-\}) $-partition inequality is violated by the point $ (\bar{x}, \bar{s}) $ by $ \epsilon' \geq \epsilon $.

		Now we consider the case (ii).
		Again from Observation \ref{bCformula}, we have
		\begin{equation}
		\label{prooff1}
		b(C\cup\{j^+\}) = b(C) + u_j^+ = \sum\limits_{h=1}^{k-1} (\kappa^h -1)a_{r^h} + b^k +u_{j^+}.
	\end{equation}
		If $ b^k + u_{j^+} < a_{r^{k-1}}$,
		similar to the case (i), the $ (\Pi, C\cup \{j^+\} ) $-partition inequality \eqref{partitionineq} for $ X(b) $ is violated by $ (\bar{x}, \bar{s}) $ by $ \epsilon' \geq \epsilon $ and hence the statement follows. Otherwise, we have $ b^k + u_{j^+} \geq a_{r^{k-1}}$.
Let $ q \leq k-1$ be the maximal integer such that	
		\begin{equation}
			\label{arq-1}
			  b^q + u_{j^+} < a_{r^{q-1}}.
		\end{equation}
		This implies that
		\begin{equation}\label{arq-2}
			b^{t+1} + u_{j^+} \geq a_{r^{t}} \ \ \text{for}\ t=q, q+1, \ldots, k-1.
		\end{equation}

		We shall complete the proof by showing that $ (x^{q}, \bar{s}) $ violates the $ (N_0,C \cup \{j^+\}) $-partition inequality \eqref{coefineq} for $ X(b^{q} + u(M\backslash C)) $ by at least $ \epsilon'$.
		From Proposition \ref{twopartition}, this implies that at the point $ (\bar{x}, \bar{s})$, the violation of the $ (\Pi', C\cup \{j^+\}) $-partition inequality \eqref{partitionineq} is also at least $ \epsilon' \geq \epsilon, $ where $ \Pi' $ is defined in \eqref{partition2}.	
		
		We proceed by induction on  the value of $ q $. Clearly, $ (x^k, \bar{s}) $ violates the $ (N_0, C \cup \{j^+\}) $-partition inequality \eqref{coefineq} for $ X(b^k + u(M\backslash C)) $ by $ \epsilon' $. Now suppose that the claim holds for $ k, k-1,\ldots, q+1 $.
		Let
		\begin{equation}
		\label{pthineq}
		s_0 + s(C\cup \{j^+\}) + \sum_{i=1}^{r'}a_i x_i + (b^{q+1} + u_{j^+}) \sum_{i=r'+1}^n x_i\geq  b^{q+1} + u_{j^+}
		\end{equation}
		be the $ (N_0, C \cup \{j^+\}) $-partition inequality \eqref{coefineq} for $ X(b^{q+1} + u(M\backslash C)), $
		where $ r' \in N_0$ satisfying $ a_{r'} \leq b^{q+1} + u_{j^+}  < a_{r'+1} $.
		Then it follows from \eqref{arq-2} that
		$ a_{r'+1} >b^{q+1} + u_{j^+} \geq a_{r^{q}} $. Hence $ r' \geq r^q $.
		Let $ \xi $ be the difference of the left and right hand side of \eqref{pthineq} at the point $ (x^{q+1}, \bar{s}) $, i.e.,
		\begin{equation}
		\label{xi1}
		\xi =  \bar{s}_0 + \bar{s}(C\cup \{j^+\})+ \sum_{i=1}^{r'}a_i x^{q+1}_i + (b^{q+1} + u_{j^+}) \sum_{i=r'+1}^n x^{q+1}_i - ( b^{q+1} + u_{j^+}).
		\end{equation}
		By the assumption, $ -\xi \geq \epsilon' $.
		From Observation \ref{omegaequality}, we have that
		\begin{equation}
		\label{finaleq1}
		\sum_{i=v^{q}}^{r^{q}} a_i \alpha^{q}_i + \omega^{q} \sum_{i=r^{q}+1}^n \alpha_i^{q} = \omega^{q}.
		\end{equation}
		By \eqref{alphagammadefproof} and the fact that $ x^{q+1} = \gamma^{q} $, it is not difficult to verify that
		\begin{eqnarray}
			\alpha_i^{q} = 0 , \qquad \qquad \qquad \qquad  ~ & \qquad \ \ \ \ \ \,  \text{for} \ i=1, \ldots, v^{q}-1, \nonumber\\
			x_i^{q}= \gamma_i^{q} + \alpha_i^{q} = x_i^{q+1} + \alpha_i^{q}, &\qquad \text{for} \ i=1, \ldots, r^{q}, \label{mediaresult} \\
			\alpha_i^{q} =  x_i^{q} = \gamma_{i}^{q} = x_i^{q+1}, \ \qquad&\qquad \ \ \ \ \  \text{for} \ i=r^{q}+1, \ldots, n \nonumber.
		\end{eqnarray}
		Adding \eqref{xi1} and \eqref{finaleq1}, we have
		\begin{eqnarray*}
			\xi & =  &   \bar{s}_0 + \bar{s}(C\cup \{j^+\})+ \sum_{i=1}^{r'}a_i x^{q+1}_i + (b^{q+1} + u_{j^+}) \sum_{i=r'+1}^n x^{q+1}_i - ( b^{q+1} + u_{j^+})  \\
			& & +\bigg (\sum_{i=v^{q}}^{r^{q}} a_i \alpha^{q}_i + \omega^{q} \sum_{i=r^{q}+1}^n \alpha_i^{q} - \omega^{q}\bigg)\\
				& = &  \bar{s}_0+ \bar{s}(C\cup \{j^+\})+\sum\limits_{i=1}^{r^q}a_i x^{q+1}_i + \sum\limits_{i=r^{q+1}}^{r'}a_i x^{q+1}_i  + (b^{q+1} + u_{j^+}) \sum\limits_{i=r'+1}^n x^{q+1}_i  \\
			& & + \sum_{i=v^{q}}^{r^{q}} a_i \alpha^{q}_i + \omega^{q} \sum_{i=r^{q}+1}^n \alpha_i^{q} - ( b^{q+1} + u_{j^+} + \omega^{q})  \qquad (\text{from} \ r' \geq r^q) \\
			& = &  \bar{s}_0+ \bar{s}(C\cup \{j^+\})+\sum\limits_{i=1}^{r'}a_i x^{q}_i + (b^{q+1} + u_{j^+}) \sum\limits_{i=r'+1}^n x^{q}_i + \omega^{q} \sum\limits_{i=r^{q}+1}^n x_i^{q} \\
			& &  - ( b^{q+1} + u_{j^+} + \omega^{q}). \qquad (\text{from} \ \eqref{mediaresult})
		\end{eqnarray*}
		On the other hand, let
		\begin{equation}
		\label{pminus1thineq}
		s_0 + s(C \cup \{ j^+ \})+\sum_{i=1}^{r''} a_i x_i + (b^{q+1} + u_{j^+} + \omega^{q}) \sum_{i=r''+1}^n x_i \geq  b^{q+1} + u_{j^+} + \omega^{q}
		\end{equation}
		be the $ (N_0,C\cup \{j^+\}) $-partition inequality for $ X(b^{q} + u(M\backslash C)) $ or equivalently,  $ X(b^{q+1} + \omega^{q} + u(M\backslash C)) $,
		where $ r'' \in N_0 $ satisfying $ a_{r''} \leq b^{q+1} + u_{j^+} + \omega^{q} < a_{r''+1} $.  Similarly, denote
		\begin{equation*}
		\xi' =  \bar{s}_0+\bar{s}(C \cup \{ j^+ \}) + \sum_{i=1}^{r''} a_i x^{q}_i + (b^{q+1} + u_{j^+} + \omega^{q}) \sum_{i=r''+1}^n x^{q}_i -  (b^{q+1} + u_{j^+} + \omega^{q}).
		\end{equation*}
		Notice that $ r'' \geq r' \geq r^{q} $. Comparing $ \xi $ and $ \xi' $, we have
		\begin{equation}
		\begin{aligned}
		\xi' - \xi  & = & & \sum_{i=r'+1}^{r''} a_i
		x_i^{q} - (b^{q+1} + u_{j^+})\sum_{i=r'+1}^{r''}x_i^{q} - \omega^{q}\sum_{i=r^{q}+1}^{r''}x_i^{q}\\
		& = & & \sum_{i=r'+1}^{r''} (a_i -  b^{q+1}- u_{j^+} - \omega^{q})
		x_i^{q} - \omega^{q}\sum_{i=r^{q}+1}^{r'}x_i^{q} \leq 0,
		\end{aligned}
		\end{equation}
		where the last inequality follows from the fact that $ a_i \leq a_{r''} \leq  b^{q+1}  +u_{j^+} + \omega^{q} $ for all $ i = r' +1, \ldots, r'' $ and $ x_i^{q} \geq 0 $ for all $ i=r^{q}+1, \ldots, r'' $. This implies that the violation of the inequality \eqref{pminus1thineq} is $ -\xi' \geq -\xi \geq \epsilon'$. We finish the proof. \qed
	\end{proof}
	

 \subsection{\bf A combinatorial separation algorithm for $\text{conv}(X(b))$ }
	
  Now we shall provide a combinatorial separation algorithm for the continuous $ \geq $-knapsack polyhedron $\text{conv}(X(b))$.

  Assume without loss of generality that $ \frac{\bar{s}_{1}}{u_{1}} \leq \cdots \leq \frac{\bar{s}_{m}}{u_{m}}$, since
  otherwise we can reorder the variable $ s_j $ according to the sorting of $ \frac{\bar{s}_{j}}{u_{j}} $, $ j=1,\ldots,m,$
  with the complexity of $ \mathcal{O}(m\log m) $. Furthermore, define
	\begin{equation}
	\label{Tdef}
	T_0=\varnothing  \ \ \text{and}\ \ T_j = \{ 1,\ldots, j \}, \ j = 1, \ldots, m.
	\end{equation}
We are able to prove the following important theorem.

	\begin{theorem}
		\label{Ttauresult}
		Given the point $ (\bar{x}, \bar{s})\in X_{L}(b)$, suppose that $ \frac{\bar{s}_{1}}{u_{1}} \leq \cdots \leq \frac{\bar{s}_{m}}{u_{m}}  $. If there exists some violated partition inequality \eqref{partitionineq}, then there exists a most violated partition inequality \eqref{partitionineq} given by $ T_\tau $ for some $ \tau\in \mathbb{Z}_+ $ with $ 0 \leq \tau \leq m $.
	\end{theorem}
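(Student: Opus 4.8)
The plan is to prove the theorem by an exchange argument driven entirely by Theorem~\ref{finalresult}. For a fixed subset $C\subseteq M$, the most violated inequality \eqref{partitionineq} over all partitions $\Pi$ is found by Algorithm~\ref{intparseparalgo} (Proposition~\ref{intparmostviolated}) applied to the transformed point of Observation~\ref{continteq}; call the resulting violation $\epsilon(C)$. Let $\epsilon^\ast=\max_{C\subseteq M}\epsilon(C)$ be the global maximum violation, which is positive by hypothesis, and fix some $C^\ast$ attaining it. If $C^\ast$ is a prefix set $T_\tau$ we are done, so the whole content is to show that whenever a maximizer is not a prefix set, we can replace it by another maximizer that is strictly ``closer'' to a prefix set, and then iterate.

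First I would make ``closer'' precise by introducing the potential $\Phi(C)=\sum_{j\in C}j-\binom{|C|+1}{2}$. Since the minimum of $\sum_{j\in C}j$ over $\ell$-element subsets of $M$ equals $1+\cdots+\ell=\binom{\ell+1}{2}$, attained only by $\{1,\dots,\ell\}=T_\ell$, we have $\Phi(C)\ge 0$ with $\Phi(C)=0$ exactly when $C$ is a prefix set. Next, given a maximizer $C^\ast$ that is not a prefix set, I would choose $j^+$ to be the smallest index not in $C^\ast$ and $j^-$ to be the largest index in $C^\ast$. A short counting argument shows $j^+\le |C^\ast|<j^-$: if $j^+>|C^\ast|$ then $\{1,\dots,|C^\ast|\}\subseteq C^\ast$, forcing $C^\ast=T_{|C^\ast|}$; and since $j^+\le|C^\ast|$ is missing, $C^\ast$ must contain some index exceeding $|C^\ast|$, so $j^-\ge|C^\ast|+1$. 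In particular $j^+<j^-$, hence $\tfrac{\bar{s}_{j^+}}{u_{j^+}}\le\tfrac{\bar{s}_{j^-}}{u_{j^-}}$ by the assumed sorting, with $j^-\in C^\ast$ and $j^+\notin C^\ast$, so the hypotheses of Theorem~\ref{finalresult} hold.

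By Theorem~\ref{finalresult}, at least one of $C^\ast\cup\{j^+\}$ and $C^\ast\setminus\{j^-\}$ has violation at least $\epsilon^\ast$, hence exactly $\epsilon^\ast$ by maximality; call it $C'$, another global maximizer. The key computation is that both candidate moves strictly decrease the potential for this extremal choice of $j^\pm$: using $\binom{\ell+2}{2}-\binom{\ell+1}{2}=\ell+1$ and $\binom{\ell+1}{2}-\binom{\ell}{2}=\ell$ with $\ell=|C^\ast|$, one gets $\Phi(C^\ast\cup\{j^+\})=\Phi(C^\ast)+j^+-(\ell+1)\le\Phi(C^\ast)-1$ and $\Phi(C^\ast\setminus\{j^-\})=\Phi(C^\ast)+\ell-j^-\le\Phi(C^\ast)-1$. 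Thus $\Phi(C')<\Phi(C^\ast)$ no matter which move Theorem~\ref{finalresult} returns. Since $\Phi$ is a nonnegative integer, iterating this replacement terminates after finitely many steps, and it can terminate only at a prefix set $T_\tau$, which is then a global maximizer, giving a most violated inequality \eqref{partitionineq} of the required form $C=T_\tau$.

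The step I expect to be the main obstacle is precisely the loss of control in Theorem~\ref{finalresult}: it guarantees progress along \emph{one} of the two moves but does not say which, so a naive potential (e.g. the plain sum of indices, which falls when we delete $j^-$ but rises when we insert $j^+$) fails. The crux is therefore the combination of the extremal choice $j^+=\min(M\setminus C^\ast)$, $j^-=\max C^\ast$ with the normalized potential $\Phi$, engineered so that \emph{both} admissible moves decrease $\Phi$ simultaneously; once this is set up, termination and the conclusion are immediate.
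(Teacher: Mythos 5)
Your proof is correct, and its engine is identical to the paper's: both arguments iterate the exchange step of Theorem~\ref{finalresult}, using the sorted order $\frac{\bar{s}_1}{u_1}\leq\cdots\leq\frac{\bar{s}_m}{u_m}$ to guarantee its hypothesis, starting from a subset giving a most violated inequality and ending at a prefix set. The difference is purely in how termination is organized, and it is a genuine difference. The paper pins down the target prefix in advance: among the set $\mathcal{C}$ of all maximizing subsets it takes $\tau=\min\{\max C : C\in\mathcal{C}\}$ and a $C^*$ attaining it, chooses $j^-=\tau$ and any missing $j^+=j'<\tau$, and then the deletion branch of Theorem~\ref{finalresult} is simply impossible (it would yield a maximizer with maximum element below $\tau$, contradicting the choice of $\tau$), so every step is an insertion and the process marches monotonically to exactly $T_\tau$. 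You instead allow both branches and show that, for the extremal choices $j^+=\min(M\setminus C^*)$, $j^-=\max C^*$, each branch strictly decreases the normalized potential $\Phi(C)=\sum_{j\in C}j-\binom{|C|+1}{2}$, via the counting bound $j^+\leq|C^*|<j^-$; this is exactly the right fix for the failure of the naive sum-of-indices potential that you identify. The paper's argmin device buys an a priori identification of which prefix one lands on and needs no potential; your potential buys robustness, since you never need to know or constrain which of the two moves Theorem~\ref{finalresult} returns. One small point worth making explicit in your write-up: the iteration may terminate at the empty set, which is the prefix $T_0$ of \eqref{Tdef} with $\Phi(\varnothing)=0$, a case the paper disposes of separately at the start of its proof; with that remark added, your argument is complete.
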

	\begin{proof}
		Define the set
		\begin{equation*}
		\begin{aligned}
			 \mathcal{C} = \big \{ C \subseteq M \, : \, \text{there exists a }
			 	\text{most violated inequality \eqref{partitionineq} given by $ C $} \big  \}.
		\end{aligned}
		\end{equation*}
		By definition, it is obvious that $ |\mathcal{C}| \geq 1$. Now if $ T_0 = \varnothing \in \mathcal{C}$, the statement is clearly true. Otherwise, for each $ C \in \mathcal{C} $, denote $ j^C_{\text{max}} = \max\{ j \, : \, j \in C \} $. 
		Let
		\begin{equation}
		\label{taudef}
		\tau = \min\{ j^C_{\text{max}} \, : \,  C \in \mathcal{C}\}
		\end{equation}
		and $ C^* \in   \argmin\{j^C_{\text{max}} \, : \,   C \in \mathcal{C}\} $. We shall prove that $ T_\tau \in \mathcal{C} $.
		
		As a matter of fact, if $ C^* = T_{\tau} $, we see that the statement is true. Otherwise, there exists $ j' <\tau$ such that $ j' \notin C^*  $. From Theorem \ref{finalresult}, either (i) $ C^*\backslash\{\tau\} $ or (ii) $ C^*\cup \{j'\} $ can lead to a most violated partition inequality. However, from the definition of $ \tau $ in \eqref{taudef}, the case (i) is not possible. Then $ C^*\cup \{j'\} \in \mathcal{C} $. Again, if $ C^*\cup \{j'\} = T_\tau $, we know the truth of the statement. Otherwise, we can repeat this process and the statement
follows naturally. \qed
	\end{proof}
	
Given a point $ (\bar{x}, \bar{s}) \in X_L(b) $, Theorem \ref{Ttauresult} indicates that, by considering only $ m+1 $ subsets of $ M $, i.e., $ T_j $, $ j=0,\ldots, m $, we can find a most violated inequality of \eqref{partitionineq} or prove $ (\bar{x}, \bar{s}) \in \text{conv}(X(b)) $. Then we can design a combinatorial separation algorithm,  Algorithm \ref{partitionineqalgo}, for the continuous $ \geq $-knapsack polyhedron $\text{conv}(X(b))$. 	
	
	\IncMargin{0.5em}
	\begin{algorithm}[ht!]
		\caption{The separation algorithm for the partition inequality \eqref{partitionineq} for $ X(b) $}
		\label{partitionineqalgo}
		\KwIn{The set $ X(b) $ and the point $ (\bar{x}, \bar{s})  \in  X_L(b)$}
		Reorder the variable $ s_j $, $ j \in M $, such that $ \frac{\bar{s}_1}{u_1} \leq \cdots \leq \frac{\bar{s}_m}{u_m} $\;\label{reorderstep}
		Initialize $ j^* = -1 $, $\epsilon^* = 0$, $ \text{ST}[0] = \bar{s}_0 $ and $ \text{BT}[0] = b - u(M) $\;
		\For{ $ j=1,\ldots, m $ }
		{
			\label{firstforstartloop}
			$ \text{ST}[j] = \text{ST}[j-1] + \bar{s}_j $\;
			$ \text{BT}[j] =  \text{BT}[j-1] + u_j  $\;
		}
		\label{firstforendloop}
		\For{ $ j=0,1,\ldots, m $}
		{
			\If{  $\text{\rm BT}[j]  > 0 $ }
			{
				Construct the point $ \hat{x} \in \mathbb{R}^{n+1} $ by setting $ \hat{x}_0 = \text{ST}[j]  $ and $  \hat{x}_i = \bar{x}_i  $ for $ i =1,\ldots, n $\; \label{constructnewpoint}
				\rev{For $ Z(\text{BT}[j]) $, use} Algorithm \ref{intparseparalgo} to find a partition inequality \eqref{intparsepaineq} violated by  $ \hat{x} $\;\label{callAlg1}
				\If{ {\rm a violated partition inequality \eqref{intparsepaineq} is found} }
				{
					Let $ \epsilon$ and $ \Pi $ be the associated violation and the partition, respecitively\;
					\If{ $ \epsilon > \epsilon^* $ }
					{
						$ \Pi^* \leftarrow \Pi $, $ j^*  \leftarrow j $, and $ \epsilon^* \leftarrow \epsilon $\;
					}
				}
			}	
		}
		\eIf { $ \epsilon^* > 0 $ }
		{Construct the $ (\Pi^*, T_{j^*}) $-partition inequality \eqref{partitionineq}\;}
		{Report that no such inequality exists\;}
	\end{algorithm}
	\DecMargin{0.5em}
	In Step \ref{reorderstep} of Algorithm \ref{partitionineqalgo}, we require to reorder the variable $ s_j $, $ j\in M $, with the complexity of  $ \mathcal{O}(m \log m) $. Then we compute $  \text{ST}[j] = s(T_j \cup\{0\} )  $ and $  \text{BT}[j] = b(T_j)  $ from Step \ref{firstforstartloop} to \ref{firstforendloop} with the complexity of $ \mathcal{O}(m) $. From Theorem \ref{complexityAlg1} and Observation \ref{continteq}, for a fixed subset $ T_j $, the partition inequalities \eqref{partitionineq} can be separated in $ \mathcal{O}(n) $ using Algorithm \ref{intparseparalgo}. This can be done from Step \ref{constructnewpoint} to Step \ref{callAlg1}. In summary, we have the following computational complexity result.
	
	\begin{theorem}
		The problem of separating the partition inequalities \eqref{partitionineq} can be solved with
        the complexity of $ \mathcal{O}(mn + m\log m) $.
	\end{theorem}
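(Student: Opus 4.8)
The plan is to establish the stated bound by a direct accounting of the running time of Algorithm \ref{partitionineqalgo}, relying on Theorem \ref{Ttauresult} for correctness and on Theorem \ref{complexityAlg1} together with Observation \ref{continteq} for the cost of the inner separation calls. The overall structure of the algorithm is threefold: a sorting step, a linear preprocessing pass that builds the prefix quantities, and a main loop over the $m+1$ candidate subsets $T_0, \ldots, T_m$ of $M$. I would bound each of these in turn and then sum.

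First I would handle the reordering in Step \ref{reorderstep}. Sorting the ratios $\frac{\bar{s}_j}{u_j}$, $j \in M$, so that $\frac{\bar{s}_1}{u_1} \leq \cdots \leq \frac{\bar{s}_m}{u_m}$, can be carried out by any comparison-based sort in $\mathcal{O}(m\log m)$ time; this is the only super-linear-in-$m$ contribution. Next I would account for the preprocessing loop running from Step \ref{firstforstartloop} to Step \ref{firstforendloop}. Since $\text{ST}[j] = \text{ST}[j-1] + \bar{s}_j$ and $\text{BT}[j] = \text{BT}[j-1] + u_j$ are computed incrementally, the whole pass that produces $\text{ST}[j] = \bar{s}_0 + \bar{s}(T_j)$ and $\text{BT}[j] = b(T_j)$ for every $j$ costs only $\mathcal{O}(m)$.

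The crux is the main loop over $j = 0, 1, \ldots, m$. For each admissible index (those with $\text{BT}[j] > 0$) the algorithm forms the auxiliary point $\hat{x}$ in $\mathcal{O}(n)$ time in Step \ref{constructnewpoint} and then invokes Algorithm \ref{intparseparalgo} on $Z(\text{BT}[j])$ in Step \ref{callAlg1}. By Observation \ref{continteq}, separating the $(\Pi, T_j)$-partition inequalities \eqref{partitionineq} of $X(b)$ reduces exactly to separating the $\Pi$-partition inequalities \eqref{intpartitionineq} of $Z(\text{BT}[j])$ at $\hat{x}$ with identical violation, and by Theorem \ref{complexityAlg1} each such call runs in $\mathcal{O}(n)$. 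As the loop performs at most $m+1$ such calls, its total cost is $(m+1)\cdot\mathcal{O}(n) = \mathcal{O}(mn)$. Summing the three contributions yields $\mathcal{O}(m\log m) + \mathcal{O}(m) + \mathcal{O}(mn) = \mathcal{O}(mn + m\log m)$, as claimed.

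I do not anticipate a genuine obstacle here: the substantive difficulty has already been discharged in proving Theorem \ref{finalresult} and Theorem \ref{Ttauresult}, which guarantee that a most violated inequality, if one exists, is attained by one of the $m+1$ nested subsets $T_\tau$, so no search over the exponentially many subsets $C \subseteq M$ or partitions $\Pi$ is required. The only point demanding a little care in the write-up is to confirm that neither the incremental prefix sums nor the per-iteration construction of $\hat{x}$ conceals any hidden factor beyond the $\mathcal{O}(n)$ charged to each call of Algorithm \ref{intparseparalgo}; once that is checked, the stated complexity follows immediately.
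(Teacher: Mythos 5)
Your proof is correct and follows essentially the same route as the paper: sort the ratios $\frac{\bar{s}_j}{u_j}$ in $\mathcal{O}(m\log m)$, build the prefix sums $\mathrm{ST}[j]$, $\mathrm{BT}[j]$ in $\mathcal{O}(m)$, and then, justified by Theorem \ref{Ttauresult}, run the $\mathcal{O}(n)$ separation of Algorithm \ref{intparseparalgo} (via Observation \ref{continteq} and Theorem \ref{complexityAlg1}) on each of the $m+1$ subsets $T_j$, giving $\mathcal{O}(mn+m\log m)$ in total. No gaps; this matches the paper's own accounting.
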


	\section{A combinatorial separation algorithm for the continuous $ \leq $-knapsack polyhedron}
	\label{lessknapsackset}
	In this section, we are concerned with the continuous $ \leq $-knapsack set
	\begin{equation*}
	Y(b) = \bigg\{ (x, y) \in \mathbb{Z}^{n}_+ \times \mathbb{R}^{m+1}_+ \, : \,
	\sum_{i\in N} a_i x_i \leq b + y_0 + \sum_{j \in M}y_j, \ 0 \leq y_j \leq u_j, j\in M \bigg\}.
	\end{equation*}
	where $ 1 | a_1 | \cdots | a_n $.
	At first, we present the family of complemented partition inequalities which, together with the constraints in $ Y(b) $, is shown to describe $ \text{conv}(Y(b)) $. It is proved that the family of complemented partition inequalities can be separated in polynomial time. We also discuss its relation to the family of $ \leq $-partition inequalities in \citet{Wolsey2016}.
	
	\subsection{\bf Complemented partition inequality and a separation algorithm for \text{conv}(Y(b))}
	Consider the set $ Y(b) $. By introducing a slack variable $ s_0 $ and complementing the variable $ y_j $ using $ y_j = u_j - s_j  $ for all $ j \in M $, we obtain an equivalent set
	\begin{equation*}
		\begin{aligned}
		\bigg \{ (x, y_0, s) \in \mathbb{Z}_+^n \times \mathbb{R}_+ \times \mathbb{R}_+^{m+1} \, : \,
		s_0 + \sum_{j \in M}s_j + \sum_{i \in N}a_i x_i = y_0+   b + \sum_{j \in M}u_j , & \\
		\ s_j \leq u_j, \ j \in M \bigg \}.
		\end{aligned}
	\end{equation*}
	After disregarding the nonnegative variable $ y_0 $, we obtain the relaxation continuous $ \geq $-knapsack set $ X(b+u(M)) $, i.e., 
	\begin{equation*}
	\begin{aligned}
	\rev{\bigg \{ (x, s) \in \mathbb{Z}_+^n \times  \mathbb{R}_+^{m+1} \, : \,
	s_0 + \sum_{j \in M}s_j + \sum_{i \in N}a_i x_i \geq   b + \sum_{j \in M}u_j ,
	\ s_j \leq u_j, \ j \in M \bigg \}}.
	\end{aligned}
	\end{equation*}
	The following easily verified property tells us the isomorphism of $ X(b+u(M)) $ and $ Y(b) $.
	\begin{myobservation}
		\label{isomorphism}
		Given $ (\bar{x},\bar{y}\,) \in \mathbb{R}^n_+ \times \mathbb{R}^{m+1}_+ $, define $ (\bar{x},\bar{s}) $ by setting
		\begin{equation}
		\label{pointxtide}
		\bar{s}_0 = b + \bar{y}_0 + \sum_{j \in M}\bar{y}_j -  \sum_{i \in N}a_i \bar{x}_i \ \ \text{and}  \ \ \bar{s}_j = u_j - \bar{y}_j  \  \ \text{for} \ \  j \in M  .
		\end{equation}
		Then $ (\bar{x},\bar{y}) \in \text{conv}(Y(b)) $ if and only if $ (\bar{x},\bar{s}) \in  \text{conv}(X(b+u(M)) ) $.
	\end{myobservation}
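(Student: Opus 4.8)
The plan is to recognize the substitution in \eqref{pointxtide} as a single invertible affine transformation and to reduce the claim to the elementary fact that such a map commutes with the convex-hull operation. Concretely, I would define $T$ on the whole space $\mathbb{R}^n \times \mathbb{R}^{m+1}$ by $T(x, y_0, y_1, \ldots, y_m) = (x, s_0, s_1, \ldots, s_m)$ with $s_0 = b + y_0 + \sum_{j\in M} y_j - \sum_{i\in N} a_i x_i$ and $s_j = u_j - y_j$ for $j \in M$, exactly as in \eqref{pointxtide}. The first point to establish is that $T$ is an affine \emph{bijection}: it fixes $x$, and on the $(y_0, \ldots, y_m)$-block its linear part is triangular with nonzero diagonal (the relations $s_j = u_j - y_j$ invert to $y_j = u_j - s_j$, and $s_0$ then pins down $y_0$ uniquely), so the inverse is $y_j = u_j - s_j$ for $j \in M$ and $y_0 = s_0 - b - u(M) + \sum_{j\in M} s_j + \sum_{i \in N} a_i x_i$.

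The second and main step is to verify that $T$ carries $Y(b)$ exactly onto $X(b+u(M))$ by translating each defining constraint. The integrality and nonnegativity of $x$ are untouched; the box constraints are equivalent since $0 \le y_j \le u_j \iff 0 \le s_j \le u_j$; the knapsack inequality $\sum_{i\in N} a_i x_i \le b + y_0 + \sum_{j\in M} y_j$ of $Y(b)$ becomes precisely $s_0 \ge 0$ by the definition of $s_0$; and, using the inverse formula for $y_0$, the remaining constraint $y_0 \ge 0$ becomes precisely the $\ge$-knapsack inequality $s_0 + \sum_{j\in M} s_j + \sum_{i\in N} a_i x_i \ge b + u(M)$ defining $X(b+u(M))$. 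Reading these equivalences in both directions shows $T(Y(b)) = X(b+u(M))$.

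Finally, I would invoke that an invertible affine map satisfies $T(\text{conv}(S)) = \text{conv}(T(S))$ and preserves membership, so that $(\bar x, \bar y) \in \text{conv}(Y(b))$ if and only if $T(\bar x, \bar y) \in \text{conv}(T(Y(b))) = \text{conv}(X(b+u(M)))$; since $T(\bar x, \bar y) = (\bar x, \bar s)$ by \eqref{pointxtide}, this is exactly the assertion of Observation \ref{isomorphism}.

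The step that needs the most care — though it remains routine — is the constraint-by-constraint translation in the second step, and in particular recognizing that the phrase ``disregarding the nonnegative variable $y_0$'' loses no information: the slack $s_0$ absorbs precisely the quantity carried by $y_0$, so what superficially looks like a projection is in fact the bijective substitution $T$. Once that is seen, the only conceptual difficulty vanishes and the rest is linear bookkeeping, which is consistent with the result being stated as an easily verified observation; I do not anticipate a genuine obstacle.
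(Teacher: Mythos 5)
Your proof is correct and follows essentially the same route as the paper, which states the observation without proof after describing exactly this substitution (introducing the slack $s_0$, complementing $y_j = u_j - s_j$, and eliminating $y_0$). Your formalization as an invertible affine map $T$ with $T(Y(b)) = X(b+u(M))$, together with the fact that affine bijections commute with $\mathrm{conv}(\cdot)$, is precisely the ``easy verification'' the paper has in mind, and your remark that dropping $y_0$ loses no information (since $y_0 \ge 0$ is exactly the $\ge$-knapsack constraint in the $s$-variables) is the right way to read the paper's word ``disregarding.''
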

	We further show the equivalence of $ Y(b) $ and $ X(b+u(M)) $ in the sense that there exists a one-to-one correspondence between the facet defining inequalities for $ \text{conv}(Y(b)) $ and those for $ \text{conv}(X(b+u(M))) $.
	\begin{mypro}
		\label{complementproof1}
		Let $ C \subseteq M $. Then
		\begin{equation}\label{newfacetineqxtide}
		 s_0 +s(M\backslash C) + \sum_{i \in N}\alpha_i x_i  \geq \gamma
		\end{equation}
		is valid for $ X(b+u(M)) $ if and only if
		\begin{equation}\label{newfacetineqx}
			\sum_{i\in N} (a_i - \alpha_i)x_i \leq b + u(M\backslash C)- \gamma + y_0 + y(C)
		\end{equation}
		is valid for $ Y(b) $.
		Moreover, \eqref{newfacetineqxtide} is facet defining for $ \text{conv}(X(b+u(M))) $ if and only if \eqref{newfacetineqx} is facet defining for
		$ \text{conv}(Y(b)) $.
	\end{mypro}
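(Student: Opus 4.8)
The plan is to exploit the affine bijection that already underlies Observation~\ref{isomorphism} and to show that this bijection carries the inequality \eqref{newfacetineqxtide} exactly onto \eqref{newfacetineqx}. Concretely, I would let $\phi$ be the affine map sending $(x,y_0,y)$ to $(x,s_0,s)$ according to \eqref{pointxtide}, that is $s_0 = b + y_0 + \sum_{j\in M} y_j - \sum_{i\in N} a_i x_i$ and $s_j = u_j - y_j$ for $j\in M$. First I would record that $\phi$ is an invertible affine transformation of the whole ambient space $\mathbb{R}^{n+m+1}$: its inverse sends $(x,s_0,s)$ back to the same $x$, to $y_j = u_j - s_j$ for $j\in M$, and to $y_0 = s_0 + \sum_{j\in M} s_j + \sum_{i\in N} a_i x_i - (b + u(M))$. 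By Observation~\ref{isomorphism}, this $\phi$ maps $\text{conv}(Y(b))$ bijectively onto $\text{conv}(X(b+u(M)))$.

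Next I would substitute \eqref{pointxtide} into the left-hand side of \eqref{newfacetineqxtide} and simplify. Using $\sum_{j\in M} y_j - \sum_{j\in M\backslash C} y_j = y(C)$ together with $\sum_{j\in M\backslash C} u_j = u(M\backslash C)$, a direct computation yields
\begin{equation*}
s_0 + s(M\backslash C) + \sum_{i\in N}\alpha_i x_i = b + y_0 + y(C) + u(M\backslash C) - \sum_{i\in N}(a_i - \alpha_i)x_i .
\end{equation*}
Hence \eqref{newfacetineqxtide} holds at $\phi(x,y_0,y)$ if and only if \eqref{newfacetineqx} holds at $(x,y_0,y)$; in other words $\phi$ identifies the two half-spaces. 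Since $\phi$ is a bijection between the two polyhedra, it follows immediately that \eqref{newfacetineqxtide} is valid for $X(b+u(M))$ precisely when \eqref{newfacetineqx} is valid for $Y(b)$.

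For the facet statement I would argue by dimension. The face of $\text{conv}(X(b+u(M)))$ induced by \eqref{newfacetineqxtide} is the set of its points meeting the inequality at equality, and by the identity above this face is exactly the $\phi$-image of the face of $\text{conv}(Y(b))$ induced by \eqref{newfacetineqx}. Because $\phi$ is an affine bijection of the ambient space, it preserves affine dimensions and sends $\text{conv}(Y(b))$ onto $\text{conv}(X(b+u(M)))$, so the two full polyhedra have equal dimension and the two faces have equal dimension. Therefore one face has dimension one less than its polyhedron if and only if the other does, which is precisely the claimed facet equivalence.

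I expect the only genuine subtlety to be the bookkeeping around the two extra variables $y_0$ and $s_0$: one must verify that $\phi$ is a bijection of the \emph{whole} space and not merely of the feasible sets, so that the dimension-preservation argument for facets is legitimate, and one must account correctly for the slack $s_0$ introduced when passing from $Y(b)$ to $X(b+u(M))$ as recorded in \eqref{pointxtide}. Once the affine isomorphism is pinned down, both the validity claim and the facet claim collapse onto the single algebraic identity displayed above, so no case analysis is required.
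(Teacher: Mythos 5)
Your proposal is correct and takes essentially the same approach as the paper: the paper likewise uses the substitution \eqref{pointxtide} to carry \eqref{newfacetineqxtide} onto \eqref{newfacetineqx} point by point, verifying that images of points of $Y(b)$ lie in $X(b+u(M))$, and transfers the facet property through the same correspondence of equality points. Your explicit formulation via an ambient affine bijection and dimension preservation is simply a cleaner packaging of the paper's terser facet argument.
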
	
	\begin{proof}
		For each of the two parts, we only prove the necessity since the proof of sufficiency is similar. First, we consider the first part.
		Let $ (\bar{x}, \bar{y}) $ be any point of $ Y(b) $. Then
		we have
		\begin{equation}\label{pro2ineq1}
			\sum_{i \in N}a_i \bar{x}_i \leq b + \bar{y}_0 + \sum_{j \in M}\bar{y}_j.
		\end{equation}
		Construct a new point $ (\bar{x}, \bar{s}) \in \mathbb{Z}_+^n \times \mathbb{R}_+^{n+1},$ where $ \bar{s} $ is defined as in \eqref{pointxtide}.
		It follows immediately that $ \bar{s}_0 \geq 0 $ and $ 0 \leq \bar{s}_j \leq u_j $ for $ j \in M $. Combining it with the fact that
		\begin{equation*}
			\begin{aligned}
			\bar{s}_0 + \sum_{j \in M}\bar{s}_j + \sum_{i \in N}a_i \bar{x}_i & = & &\!\!\!\!  b + \bar{y}_0 + \sum_{j \in M}\bar{y}_j -  \sum_{i \in N}a_i \bar{x}_i + \sum_{j \in M}(u_j-\bar{y}_j ) +\sum_{i \in N}a_i \bar{x}_i    \\
			& = & &\!\!\! \!  b + \bar{y}_0+ \sum_{j \in M}u_j  \geq  b + \sum_{j \in M}u_j,
			\end{aligned}
		\end{equation*}
		we have  $ (\bar{x}, \bar{s})  \in X(b+u(M))  $. The validity of \eqref{newfacetineqxtide} indicates that
		\begin{equation}\label{validtildex}
		 \bar{s}_0 + \bar{s}(M\backslash C) +\sum_{i \in N}\alpha_i \bar{x}_i \geq \gamma.
		\end{equation}
		By substituting \eqref{pointxtide} in \eqref{validtildex}, it follows
		\begin{equation*}
		\sum_{i \in N}(a_i - \alpha_i) \bar{x}_i \leq b + u(M\backslash C)- \gamma  + \bar{y}_0 + \bar{y}(C).
		\end{equation*}
		So \eqref{newfacetineqx} is a valid inequality for $ Y (b) $. Furthermore, if $ (\bar{x}, \bar{s}) $ satisfies \eqref{newfacetineqxtide} at equality, $ (\bar{x}, \bar{y}) $ also satisfies \eqref{newfacetineqx} at equality. This implies that if \eqref{newfacetineqxtide} is facet defining for $ \text{conv}(Y(b)) $, then \eqref{newfacetineqx} is also facet defining for $ \text{conv}(X(b+u(M))) $, which completes the proof. \qed
	\end{proof}
	
	%
	%
	%

	The result of Proposition \ref{complementproof1} is stated in \citet{Atamturk2010} when $ M= \varnothing $. Based on Proposition \ref{complementproof1}, we now describe the complemented partition inequalities for $ Y(b) $. Let $ \Pi $ be the partition of $ N_0 $ as defined in \eqref{partition}, $  C \subseteq M $, and $ B(C) = b +u(M\backslash C) $. Start with $ \beta_p = B(C) $ and define
	$ \kappa_t, \mu_t, \beta_{t-1} $ for $ t=p,\ldots, 1 $ as in \eqref{paradef}. The $ (\Pi, M\backslash C) $-partition inequality for $ X(b + u(M)) $ is
	\begin{equation}
	\label{lesspartitionineq}	
	s_0 +  s(M\backslash C) +\sum_{i=1}^{j_1}\min\{a_i, \kappa_1\}x_i + \sum_{t=2}^p \prod_{l=1}^{t-1}\kappa_l \sum_{i=i_t}^{j_t} \min\bigg \{ \frac{a_i}{a_{i_t}}, \kappa_t \bigg \} x_i \geq \prod_{t=1}^{p}\kappa_t.
	\end{equation}
	Using Proposition \ref{complementproof1}, we obtain the valid inequality
	\begin{equation}\label{comppartitionineq}
	\sum_{i\in N} (a_i - \alpha_i)x_i \leq B(C)- \prod_{t=1}^{p}\kappa_t + y_0 + y(C),
	\end{equation}
	 for $ Y(b) $ where $ \alpha_i $ is the coefficient of the variable $ x_i $ in the inequality \eqref{lesspartitionineq}. We call \eqref{comppartitionineq} the {\it complemented partition inequality}. \citet{Wolsey2016} have shown that with the addition of the family of the inequalities \eqref{lesspartitionineq}, the convex hull of $ X(b+u(M)) $ is completely described. This, together with Proposition \ref{complementproof1}, implies that the following result.
	\begin{theorem}
		\label{convhullY}
		The convex hull of $ Y(b) $ is described by its constraints and the complemented partition inequalities \eqref{comppartitionineq}.
	\end{theorem}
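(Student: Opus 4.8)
The plan is to transfer the known complete description of $\text{conv}(X(b+u(M)))$ to $\text{conv}(Y(b))$ through the affine correspondence already recorded in Observation \ref{isomorphism} and Proposition \ref{complementproof1}. The starting point is the result of \citet{Wolsey2016}: the polyhedron $\text{conv}(X(b+u(M)))$ is completely described by the defining constraints of $X(b+u(M))$ together with the family of partition inequalities \eqref{lesspartitionineq}, ranging over all partitions $\Pi$ of $N_0$ and all subsets $C\subseteq M$. The whole argument then reduces to rewriting this description in the variables of $Y(b)$.

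First I would make explicit that the substitution \eqref{pointxtide}, namely $\bar s_0 = b+\bar y_0+\sum_{j\in M}\bar y_j-\sum_{i\in N}a_i\bar x_i$ and $\bar s_j=u_j-\bar y_j$ for $j\in M$, is invertible, with inverse $\bar y_j=u_j-\bar s_j$ and $\bar y_0=\bar s_0+\sum_{j\in M}\bar s_j+\sum_{i\in N}a_i\bar x_i-b-u(M)$. Observation \ref{isomorphism} already states that this affine map sends $\text{conv}(Y(b))$ onto $\text{conv}(X(b+u(M)))$, so a point $(\bar x,\bar y)$ lies in $\text{conv}(Y(b))$ if and only if its image $(\bar x,\bar s)$ satisfies every inequality in the Wolsey--Yaman description.

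Second, I would translate that description inequality by inequality under the substitution. For the defining constraints of $X(b+u(M))$ this is a direct computation: $x_i\ge 0$ is unchanged; each $s_j\ge 0$ becomes $\bar y_j\le u_j$ and each $s_j\le u_j$ becomes $\bar y_j\ge 0$; the slack constraint $s_0\ge 0$ becomes the knapsack constraint $\sum_{i\in N}a_i\bar x_i\le b+\bar y_0+\sum_{j\in M}\bar y_j$ of $Y(b)$; and the knapsack constraint $s_0+\sum_{j\in M}s_j+\sum_{i\in N}a_i\bar x_i\ge b+u(M)$ collapses to $\bar y_0\ge 0$. Hence the defining constraints of $X(b+u(M))$ correspond exactly to the defining constraints of $Y(b)$, with nothing lost or introduced. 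For the partition inequalities \eqref{lesspartitionineq} no further work is needed, since Proposition \ref{complementproof1} is precisely the statement that such an inequality is valid (indeed facet defining) for $X(b+u(M))$ if and only if the associated complemented partition inequality \eqref{comppartitionineq} is valid (facet defining) for $Y(b)$; in fact \eqref{comppartitionineq} was \emph{defined} as the image of \eqref{lesspartitionineq} under this very correspondence.

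Combining the two steps, $(\bar x,\bar y)\in\text{conv}(Y(b))$ if and only if $(\bar x,\bar s)$ satisfies all defining constraints of $X(b+u(M))$ and all partition inequalities \eqref{lesspartitionineq}, which by the translation above is equivalent to $(\bar x,\bar y)$ satisfying all defining constraints of $Y(b)$ together with all complemented partition inequalities \eqref{comppartitionineq}; this is exactly the assertion of the theorem. The only genuinely delicate point is the bookkeeping in the second step: one must confirm that the translation of the constraints of $X(b+u(M))$ reproduces precisely the constraints of $Y(b)$, and in particular that the roles of $y_0\ge 0$ and of the two knapsack constraints are interchanged as above, so that no extraneous inequality is created and none is dropped. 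Everything else follows immediately from Observation \ref{isomorphism}, Proposition \ref{complementproof1}, and the cited description of $\text{conv}(X(b+u(M)))$.
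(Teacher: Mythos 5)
Your proof is correct and follows essentially the same route as the paper: the paper obtains Theorem \ref{convhullY} directly by combining the Wolsey--Yaman complete description of $\text{conv}(X(b+u(M)))$ with the complementation correspondence of Observation \ref{isomorphism} and Proposition \ref{complementproof1}. Your write-up merely makes explicit the bookkeeping the paper leaves implicit, namely that the affine substitution \eqref{pointxtide} is a bijection carrying the defining constraints of $X(b+u(M))$ exactly onto those of $Y(b)$ and the inequalities \eqref{lesspartitionineq} onto \eqref{comppartitionineq}.
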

	
	From Theorem \ref{convhullY}, the separation problem of $ \text{conv}(Y(b)) $ can be reduced to,
	given a point $ (\bar{x},\bar{y}) \in Y_L(b) $, either finding a violated complemented partition inequality \eqref{comppartitionineq} or proving that $  (\bar{x},\bar{y}) \in  \text{conv}(Y(b)) $. By further combining with Observation \ref{isomorphism}, we know, the problem of finding
	a complemented partition inequality \eqref{comppartitionineq} violated by $ (\bar{x}, \bar{y}\,) $ is equivalent to that of finding an inequality of \eqref{lesspartitionineq} violated by $ (\bar{x}, \bar{s}) $ where $ \bar{s} $ is defined as in \eqref{pointxtide}.
This leads to the following separation algorithm.

\IncMargin{0.5em}
 \begin{algorithm}[ht!]
  \caption{The separation algorithm for the complemented partition inequality \eqref{comppartitionineq} for $ Y(b) $}
  \label{cpartitionineqalgo}
  \KwIn{The set $ Y(b) $ and the point $ (\bar{x}, \bar{y})  \in  Y_L(b)$}
  Construct the point $ (\bar{x}, \bar{s}) $ by \eqref{pointxtide}\;
  For the point $ (\bar{x}, \bar{s}) $, call Algorithm \ref{partitionineqalgo} to test whether a violated partition inequality \eqref{lesspartitionineq} exists for $ X(b+u(M)) $ \;
  \eIf {A violated partition inequality \rev{\eqref{lesspartitionineq}} is found}
  {Construct the complemented partition inequality \eqref{comppartitionineq}\;}
  {Report that no such inequality exists\;}
 \end{algorithm}
 \DecMargin{0.5em}

As Algorithm \ref{partitionineqalgo} can be done in $ \mathcal{O}(mn+m\log m) $ time using Algorithm \ref{partitionineqalgo}, it follows immediately that the separation problem of $ \text{conv}(Y(b)) $ can be solved in the same complexity.

	\begin{theorem}
		The separation problem of $ \text{conv}(Y(b)) $ can be solved with the complexity of $ \mathcal{O}(mn+m\log m) $.
	\end{theorem}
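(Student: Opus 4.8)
The plan is to observe that the theorem follows almost immediately from the machinery already assembled, by verifying that Algorithm \ref{cpartitionineqalgo} is both correct and runs within the claimed budget. First I would argue correctness. By Theorem \ref{convhullY}, a point $(\bar{x},\bar{y}) \in Y_L(b)$ lies in $\text{conv}(Y(b))$ if and only if it violates no complemented partition inequality \eqref{comppartitionineq}. Combining Observation \ref{isomorphism} with Proposition \ref{complementproof1}, a complemented partition inequality \eqref{comppartitionineq} is violated at $(\bar{x},\bar{y})$ by exactly the amount that the corresponding partition inequality \eqref{lesspartitionineq} for $X(b+u(M))$ is violated at the transformed point $(\bar{x},\bar{s})$ defined in \eqref{pointxtide}. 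Hence the search for a violated complemented partition inequality for $Y(b)$ is equivalent to the search for a violated partition inequality for $X(b+u(M))$ at $(\bar{x},\bar{s})$, which is precisely the task of Algorithm \ref{partitionineqalgo}. Consequently Algorithm \ref{cpartitionineqalgo} either returns a most violated complemented partition inequality or correctly certifies that $(\bar{x},\bar{y}) \in \text{conv}(Y(b))$.

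Second, I would account for the running time of Algorithm \ref{cpartitionineqalgo} step by step. Forming the point $(\bar{x},\bar{s})$ via \eqref{pointxtide} requires computing $\bar{s}_0 = b+\bar{y}_0+\sum_{j\in M}\bar{y}_j-\sum_{i\in N}a_i\bar{x}_i$, an $\mathcal{O}(m+n)$ operation, together with the $m$ values $\bar{s}_j = u_j-\bar{y}_j$, each $\mathcal{O}(1)$; so this step costs $\mathcal{O}(m+n)$. The dominant step is the single invocation of Algorithm \ref{partitionineqalgo} on the instance $X(b+u(M))$, which by the complexity result established just above takes $\mathcal{O}(mn+m\log m)$ time. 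Finally, when a violated partition inequality \eqref{lesspartitionineq} is returned, converting it into the complemented partition inequality \eqref{comppartitionineq} only requires reading off the coefficients $\alpha_i$ of the variables $x_i$ and the right-hand side $B(C)-\prod_{t=1}^{p}\kappa_t$, which is $\mathcal{O}(n)$. Summing these contributions yields the total bound $\mathcal{O}(mn+m\log m)$.

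There is no genuinely hard step here: the result is a corollary of the correctness of the reduction established in Observation \ref{isomorphism}, Proposition \ref{complementproof1}, and Theorem \ref{convhullY}, combined with the complexity bound already proved for Algorithm \ref{partitionineqalgo}. The only point deserving care is the bookkeeping that the two auxiliary operations — the variable complementation \eqref{pointxtide} and the back-substitution into \eqref{comppartitionineq} — are each dominated by the cost of Algorithm \ref{partitionineqalgo}, so that neither inflates the overall complexity beyond $\mathcal{O}(mn+m\log m)$.
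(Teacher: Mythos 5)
Your proposal is correct and follows essentially the same route as the paper: reduce the separation problem for $\text{conv}(Y(b))$ to that for $X(b+u(M))$ via the complementation \eqref{pointxtide} (Observation \ref{isomorphism}, Proposition \ref{complementproof1}, Theorem \ref{convhullY}), then invoke Algorithm \ref{partitionineqalgo}, whose $\mathcal{O}(mn+m\log m)$ cost dominates. Your explicit bookkeeping of the $\mathcal{O}(m+n)$ transformation and $\mathcal{O}(n)$ back-substitution steps is a harmless elaboration of what the paper treats as immediate.
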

	
	\subsection{\bf Connection with the $ \leq $-partition inequalities}
	We now discuss the relation between the complemented partition inequality \eqref{comppartitionineq} and the $ \leq  $-partition inequality presented in \cite{Wolsey2016}. We first introduce the $ \leq  $-partition inequality. Let $ g $ be the smallest index $ i $ such that $ a_i $ does not divide $ B(C) $ and $ q \in \{g, \ldots, n \} $.
	We use the same partition of $ \Pi $ in \eqref{partition} with the restriction that $ i_2 = q $. Notice that  $ \big \{\{i_2 =q, \ldots, j_2\}, \cdots , \{i_p, \ldots, j_p = n \} \big \}$ is  a partition of $ \{q,\ldots, n\} $. Start with $ \beta_p = B(C) $ and define
	$ \kappa_t, \mu_t, \beta_{t-1} $ for $ t=p,\ldots, 1 $ as in \eqref{paradef}. Let $ \lambda_t = a_{i_t} - \beta_{t-1} $ for $ t =p, \ldots, 2 $. Define $ \pi_2 = \lambda_2 $, $ \pi_t = \kappa_{t-1}\pi_{t-1} + (\lambda_t - \lambda_{t-1}) $ for $ t = 3, \ldots, p $ and $ \pi_0 = \kappa_p \pi_p - \lambda_p $. The $ \leq $-partition inequality in \citet{Wolsey2016} is
	\begin{equation}\label{wolseypartitionineq}
		\sum_{t=2}^p \pi_t \sum_{i =i_t}^{j_t} \frac{a_i}{a_{i_t}} x_i \leq \pi_0+ y_0 +y(C).
	\end{equation}
	Next, we give a closed form of \rev{$ \pi_t $ for $ t=0, 2, 3, \ldots, p$}.
	\begin{mylemma}
		\label{pilemma}
		$ \pi_t =a_{i_t} - \prod\limits_{l=1}^{t-1} \kappa_l$ for $ t=2,\ldots, p $ and $ \pi_0 = B(C)- \prod\limits_{t=1}^{p} \kappa_t$.
	\end{mylemma}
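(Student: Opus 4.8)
The plan is to establish the closed form for $\pi_t$ by induction on $t$ and then read off $\pi_0$ from it. Before starting the induction I would record two preliminary facts. First, since $B(C)=b+u(M\backslash C)$ and every $a_{i_t}$, $\kappa_t$, $\mu_t$ is an integer, all the quantities $\beta_t$ defined in \eqref{paradef} are integers. Second, because $i_1=0$ forces $a_{i_1}=a_0=1$, we get $\kappa_1=\lceil \beta_1/a_{i_1}\rceil=\beta_1$, and hence $\beta_1=\kappa_1$. This identifies the innermost term of the product and anchors the induction.

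For the base case $t=2$ I would simply compute $\pi_2=\lambda_2=a_{i_2}-\beta_1=a_{i_2}-\kappa_1=a_{i_2}-\prod_{l=1}^{1}\kappa_l$, using $\beta_1=\kappa_1$. The engine of the inductive step is a single algebraic identity for the increment $\lambda_t-\lambda_{t-1}$. Writing $\lambda_t=a_{i_t}-\beta_{t-1}$ and $\lambda_{t-1}=a_{i_{t-1}}-\beta_{t-2}$ and substituting $\beta_{t-2}=\beta_{t-1}-(\kappa_{t-1}-1)a_{i_{t-1}}$ from \eqref{paradef}, the $\beta_{t-1}$ terms cancel and one obtains $\lambda_t-\lambda_{t-1}=a_{i_t}-\kappa_{t-1}a_{i_{t-1}}$.

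Assuming $\pi_{t-1}=a_{i_{t-1}}-\prod_{l=1}^{t-2}\kappa_l$, I would then substitute this and the increment identity into the recursion $\pi_t=\kappa_{t-1}\pi_{t-1}+(\lambda_t-\lambda_{t-1})$. The two copies of $\kappa_{t-1}a_{i_{t-1}}$ cancel, leaving $\pi_t=a_{i_t}-\kappa_{t-1}\prod_{l=1}^{t-2}\kappa_l=a_{i_t}-\prod_{l=1}^{t-1}\kappa_l$, which closes the induction for $t=2,\ldots,p$. Finally, for $\pi_0=\kappa_p\pi_p-\lambda_p$ I would substitute the just-proved formula $\pi_p=a_{i_p}-\prod_{l=1}^{p-1}\kappa_l$ together with $\lambda_p=a_{i_p}-\beta_{p-1}$, and regroup the non-product terms as $(\kappa_p-1)a_{i_p}+\beta_{p-1}$; by \eqref{paradef} this equals $\mu_p+(\beta_p-\mu_p)=\beta_p=B(C)$, yielding $\pi_0=B(C)-\prod_{t=1}^{p}\kappa_t$.

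The proof is essentially a bookkeeping induction, so there is no deep difficulty; the hard part will be getting the increment identity $\lambda_t-\lambda_{t-1}=a_{i_t}-\kappa_{t-1}a_{i_{t-1}}$ exactly right, since it is precisely this cancellation that makes the telescoping collapse, and it is the place where an index or sign slip would propagate. The preliminary observation $\beta_1=\kappa_1$ (via $a_0=1$ and the integrality of the $\beta_t$) is likewise essential and easy to overlook.
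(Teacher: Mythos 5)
Your proof is correct and follows essentially the same route as the paper's: induction on $t$ anchored at $\pi_2=a_{i_2}-\kappa_1$ via $\kappa_1=\beta_1$, with the cancellation driven by the definition of $\beta_{t-1}$ in \eqref{paradef}. Your increment identity $\lambda_t-\lambda_{t-1}=a_{i_t}-\kappa_{t-1}a_{i_{t-1}}$ is just a repackaging of the paper's intermediate formula $\lambda_t=\kappa_t a_{i_t}-\beta_t$, and your treatment of $\pi_0$ likewise matches after regrouping, so there is no substantive difference.
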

	\begin{proof}
		First, we evaluate
		\begin{equation}
		\label{sformu}
		\lambda_{t} = a_{i_{t}} - \beta_{t-1} =  a_{i_{t}} - [\beta_{t} - (\kappa_{t}-1)a_{i_{t}}] = \kappa_{t} a_{i_{t}}  - \beta_{t}
		\end{equation}
		for $ t=2,\ldots, p $.
		We prove $ \pi_t =a_{i_t} - \prod\limits_{l=1}^{t-1} \kappa_l$ for $ t =2, \ldots, p $ by induction. Since $ i_1 = 0 $ in \eqref{partition}, it is easy to see that $ \kappa_1 = \beta_1 $. Hence $ \pi_2 = \lambda_2 = a_{i_2} - \beta_1 = a_{i_2} - \kappa_1$,
which shows the truth of the statement for $t=2$. Assume that the statement is true for some $2\le t\le p-1$. Then
		\begin{equation}
		\begin{aligned}
			\pi_{t+1}  &= &&  \kappa_{t} \pi_{t} + (\lambda_{t+1} - \lambda_{t}) \\
						& = &&\kappa_{t}( a_{i_{t}}- \prod\limits_{l=1}^{t-1}\kappa_l) +  (\lambda_{t+1} - \lambda_{t})\\
						& = & & (\kappa_{t} a_{i_{t}}+  \lambda_{t+1} - \lambda_{t}) - \prod\limits_{l=1}^{t}\kappa_l\\
						& = & & (\kappa_{t} a_{i_{t}} + a_{i_{t+1}} - \beta_{t} - \lambda_{t} ) - \prod\limits_{l=1}^{t}\kappa_l\\
						& = & & a_{i_{t+1}} - \prod\limits_{l=1}^{t}\kappa_l,
		\end{aligned}
		\end{equation}	
		where the last equality comes from \eqref{sformu}. So the statement is also true for $t+1$. By induction, \rev{we} know that the statement
        holds for $ t =2, \ldots, p. $
		
		Furthermore, combining \eqref{sformu} and the fact that $ \beta_p = B(C) $ yields $ \lambda_{p} = \kappa_p a_{i_p} - B(C) $. This, together with $ \pi_p =  a_{i_p} - \prod\limits_{t=1}^{p-1}\kappa_t $, indicates
			\begin{equation*}
		\begin{aligned}
		\pi_0  = \kappa_p \pi_p - \lambda_p  = \kappa_p \bigg(a_{i_p} - \prod\limits_{t=1}^{p-1}\kappa_t\bigg ) - (\kappa_p a_{i_p} - B(C))    = B(C) -  \prod\limits_{t=1}^{p}\kappa_t.
		\end{aligned}	
		\end{equation*}
 This completes our proof. \qed
 \end{proof}

	We now show that the complemented partition inequality \eqref{comppartitionineq} is stronger than the $ \leq $-partition inequality \eqref{wolseypartitionineq}.
	\begin{mypro}
		\label{finnalpro}
		For the same partition $ \Pi $ of $ N_0 $ and the same subset $ C \subseteq M $, the complemented partition inequality \eqref{comppartitionineq} is stronger than the $ \leq $-partition inequality \eqref{wolseypartitionineq}. Moreover, the two inequalities are equivalent if and only if
		\begin{equation}
			\label{proeq1}
			\min\bigg\{\frac{a_i}{a_{i_t}}, \kappa_t \bigg\} = \frac{a_i}{a_{i_t}} \ \ \text{for} \ \ i=i_t, \ldots,j_t \ \ \text{with}  \ t=1, \ldots, p.
		\end{equation}
	\end{mypro}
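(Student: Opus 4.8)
The plan is to compare the two inequalities \eqref{comppartitionineq} and \eqref{wolseypartitionineq} coefficient by coefficient, after first checking that they share a common right-hand side. Both are $\le$ inequalities in the nonnegative variables $x$, $y_0$, $y(C)$, and the terms $y_0 + y(C)$ occur identically on the right of each; hence ``stronger'' will reduce to showing that every coefficient of $x_i$ on the left of \eqref{comppartitionineq} is at least the corresponding coefficient in \eqref{wolseypartitionineq}. First I would invoke Lemma \ref{pilemma}, which gives $\pi_0 = B(C) - \prod_{t=1}^{p}\kappa_t$, so the right-hand side of \eqref{wolseypartitionineq} equals $B(C) - \prod_{t=1}^{p}\kappa_t + y_0 + y(C)$, exactly the right-hand side of \eqref{comppartitionineq}. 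Thus the whole comparison collapses to the left-hand side coefficients.

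Next I would read off these coefficients. From \eqref{lesspartitionineq} we have $\alpha_i = \min\{a_i,\kappa_1\}$ for $i \in \{1,\dots,j_1\}$ and $\alpha_i = \prod_{l=1}^{t-1}\kappa_l\,\min\{a_i/a_{i_t},\kappa_t\}$ for $i \in \{i_t,\dots,j_t\}$ with $t \ge 2$, so the coefficient of $x_i$ in \eqref{comppartitionineq} is $a_i - \alpha_i$. On the other side, the block $t=1$ is absent from the sum in \eqref{wolseypartitionineq}, so there the coefficient of $x_i$ is $0$; for $t \ge 2$, using $\pi_t = a_{i_t} - \prod_{l=1}^{t-1}\kappa_l$ from Lemma \ref{pilemma}, the coefficient of $x_i$ is $\pi_t\, a_i/a_{i_t} = a_i - (a_i/a_{i_t})\prod_{l=1}^{t-1}\kappa_l$.

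I would then settle the two cases. For $i$ in the first block ($t=1$) the coefficient difference is $(a_i - \min\{a_i,\kappa_1\}) - 0 \ge 0$, with equality precisely when $\min\{a_i,\kappa_1\} = a_i$; since $a_{i_1} = a_0 = 1$, this is exactly the $t=1$ instance of \eqref{proeq1}. For $i$ in a block with $t \ge 2$, the difference is $\prod_{l=1}^{t-1}\kappa_l\big(a_i/a_{i_t} - \min\{a_i/a_{i_t},\kappa_t\}\big) \ge 0$, because $\prod_{l=1}^{t-1}\kappa_l > 0$ and $a_i/a_{i_t} \ge \min\{a_i/a_{i_t},\kappa_t\}$; equality holds exactly when $a_i/a_{i_t} = \min\{a_i/a_{i_t},\kappa_t\}$, i.e.\ the instance of \eqref{proeq1} for that pair $(i,t)$. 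Since all $x_i \ge 0$ and the right-hand sides coincide, the inequality with the pointwise larger left-hand side coefficients is the more restrictive one, which proves that \eqref{comppartitionineq} dominates \eqref{wolseypartitionineq}; the two are equivalent exactly when every coefficient difference vanishes, that is, when \eqref{proeq1} holds for all $i$ and $t$.

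I do not expect a serious obstacle, as the argument is a direct coefficient comparison once Lemma \ref{pilemma} is available. The one point requiring care is the bookkeeping of the first block: the sum in \eqref{wolseypartitionineq} begins at $t=2$, so the $t=1$ coefficients must be handled separately and one must verify that the gap $a_i - \min\{a_i,\kappa_1\}$ there is captured by \eqref{proeq1} under the convention $a_{i_1} = a_0 = 1$. I would also state explicitly, to forestall any sign confusion, that for a $\le$ constraint over nonnegative variables a pointwise larger left-hand side gives a stronger inequality; this is why the nonnegativity of $\prod_{l=1}^{t-1}\kappa_l$ and of the $\min$-gap is precisely what drives the domination.
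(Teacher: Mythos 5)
Your proposal is correct and follows essentially the same route as the paper: invoke Lemma \ref{pilemma} to identify the right-hand sides (via $\pi_0 = B(C) - \prod_{t=1}^{p}\kappa_t$), then compare left-hand-side coefficients blockwise, using $\pi_t = a_{i_t} - \prod_{l=1}^{t-1}\kappa_l$ for $t\ge 2$ and the zero coefficients of the first block, with equality in every comparison holding exactly when \eqref{proeq1} does. Your added remarks on why pointwise larger coefficients make a $\le$ inequality over nonnegative variables stronger, and on the $t=1$ bookkeeping under $a_{i_1}=a_0=1$, are consistent with and only make explicit what the paper leaves implicit.
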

	\begin{proof}
		By Lemma \ref{pilemma}, the right hand sides of both inequalities \eqref{comppartitionineq} and \eqref{wolseypartitionineq} are the same. We prove the statement by showing that the coefficient of each variable $ x_i $ in \eqref{wolseypartitionineq}, denoted by $ \sigma_i $, is less than or equal to that in \eqref{comppartitionineq}. For each $ i=1,\ldots, j_1$, we have $ \sigma_i = 0 $. In \eqref{comppartitionineq}, the coefficient of $ x_i $ is
		\begin{equation}
			\label{eq1ineq1}
			a_i - \alpha_i = a_i - \min\{a_i, \kappa_1\}  \geq 0= \sigma_i.
		\end{equation}
		For each $ i_t \leq i\leq {j_t} $ with $ t \geq 2 $, as $ \alpha_i $ is the coefficient of $ x_i $ in \eqref{lesspartitionineq}, it follows
		\begin{equation}
				\label{eq1ineq2}
		\begin{aligned}
			a_i - \alpha_i & = & & a_i - \prod_{l=1}^{t-1}\kappa_l  \min\bigg \{ \frac{a_i}{a_{i_t}}, \kappa_t \bigg \} \geq  a_i - \prod_{l=1}^{t-1}\kappa_l  \frac{a_i}{a_{i_t}}\\
				& = & & \bigg (a_{i_t} - \prod_{l=1}^{t-1}\kappa_l \bigg )\frac{a_i}{a_{i_t}}  = \pi_t \frac{a_i}{a_{i_t}} = \sigma_i.	\qquad \qquad \qquad \qquad
		\end{aligned}
		\end{equation}
		Here notice that $ a_{i_t} - \prod_{l=1}^{t-1}\kappa_l = \pi_t  $ is used, which is from Lemma \ref{pilemma}.
		Finally, \eqref{eq1ineq1} and \eqref{eq1ineq2} hold at equality if and only if \eqref{proeq1} holds, which completes the second part. \qed
	\end{proof}
	
  Even with the same partition $ \Pi $ of $ N_0 $ and the same subset $ C \subseteq M $, the complemented partition inequality \eqref{comppartitionineq} is stronger than $ \leq $-partition inequality \eqref{wolseypartitionineq}. In general, there always exists some $ \leq $-partition inequality defined by some $ \Pi' $ and $ C $ such that this inequality is the same as \eqref{comppartitionineq}. To see this, the following observation is important.
	
	\begin{myobservation}
		\label{findobservation}
		For some fixed $ \tau \in \mathbb{Z}$ with $ 1\leq \tau \leq p $, if $ \frac{a_{j_{\tau}}}{a_{i_{\tau}}} > \kappa_{\tau}  $, modifying the partition $ \Pi $ by changing the $ \tau $-th block into two blocks
		$$
			\{i_\tau, \ldots, j_{\tau-1}\} \ \text{and} \ \{j_\tau\},
		$$
		the associated partition inequality
		\rev{\begin{equation*}
			\begin{aligned}
			s_0 +  s(M\backslash C) +\sum_{i=1}^{j_1}\min\{a_i, \kappa_1\}x_i + \sum_{t=2}^{\tau-1} \prod_{l=1}^{t-1}\kappa_l \sum_{i=i_t}^{j_t} \min\bigg \{ \frac{a_i}{a_{i_t}}, \kappa_t \bigg \} x_i    & \\
			 + \prod_{l=1}^{\tau-1}\kappa_l \sum_{i=i_\tau}^{j_\tau-1}\min\bigg \{ \frac{a_i}{a_{i_\tau}}, \kappa_\tau \bigg \} x_i+  \prod_{l=1}^{\tau}\kappa_l x_{j_\tau} 
			  & \\ 
			  + \sum_{t=\tau+1}^{p} \prod_{l=1}^{t-1}\kappa_l \sum_{i=i_t}^{j_t} \min\bigg \{ \frac{a_i}{a_{i_t}}, \kappa_t \bigg \} x_i   &  \geq\prod_{t=1}^{p}\kappa_t .
			\end{aligned}
		\end{equation*}}
		is the same as \eqref{lesspartitionineq} {\rm(}and hence the same complemented partition inequality \eqref{comppartitionineq} is obtained{\rm)}.
	\end{myobservation}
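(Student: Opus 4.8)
The plan is to prove that the split partition produces \emph{literally the same} inequality \eqref{lesspartitionineq}, coefficient-by-coefficient and with the same right-hand side. Since the complemented partition inequality \eqref{comppartitionineq} is obtained from \eqref{lesspartitionineq} by the single fixed transformation of Proposition \ref{complementproof1}, identity of the two versions of \eqref{lesspartitionineq} immediately yields identity of the two complemented inequalities, so I would reduce the whole claim to the form \eqref{lesspartitionineq}.

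First I would set up the recursion \eqref{paradef} for the modified partition $\Pi'$, whose blocks are those of $\Pi$ for indices $1,\dots,\tau-1$, then $\{i_\tau,\dots,j_\tau-1\}$, then the singleton $\{j_\tau\}$, and finally the original blocks $\tau+1,\dots,p$ (re-indexed). Writing the quantities of $\Pi'$ with a prime and running \eqref{paradef} backwards from $\beta'_{p+1}=B(C)$, the blocks lying strictly after the split are untouched, so $\beta'_{t}=\beta_{t-1}$ and $\kappa'_{t}=\kappa_{t-1}$ for $t=\tau+2,\dots,p+1$; in particular $\beta'_{\tau+1}=\beta_{\tau}$.

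The crux is the new singleton block $\{j_\tau\}$, whose leading weight is $a_{j_\tau}$, and here I would invoke the hypothesis $a_{j_\tau}/a_{i_\tau}>\kappa_\tau$. Together with the standard bound $0<\beta_\tau\le\kappa_\tau a_{i_\tau}$ (positivity from the recursion, the upper bound from $\kappa_\tau=\lceil\beta_\tau/a_{i_\tau}\rceil$), this gives $0<\beta_\tau<a_{j_\tau}$, whence
\[
\kappa'_{\tau+1}=\bigg\lceil\frac{\beta'_{\tau+1}}{a_{j_\tau}}\bigg\rceil=\bigg\lceil\frac{\beta_\tau}{a_{j_\tau}}\bigg\rceil=1 .
\]
Because $\kappa'_{\tau+1}=1$, the recursion leaves $\beta$ unchanged across this block, i.e. $\beta'_{\tau}=\beta'_{\tau+1}=\beta_\tau$; hence the block $\{i_\tau,\dots,j_\tau-1\}$ inherits $\kappa'_\tau=\lceil\beta_\tau/a_{i_\tau}\rceil=\kappa_\tau$ and $\beta'_{\tau-1}=\beta_{\tau-1}$, so all earlier data agree, $\kappa'_t=\kappa_t$ for $t=1,\dots,\tau$.

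It then remains to read off coefficients. The right-hand side matches because the extra factor $\kappa'_{\tau+1}=1$ contributes nothing, so $\prod_{t=1}^{p+1}\kappa'_t=\prod_{t=1}^{p}\kappa_t$. For the variables: the blocks before $\tau$ are unchanged; the variables $x_i$ with $i_\tau\le i\le j_\tau-1$ keep the coefficient $\prod_{l=1}^{\tau-1}\kappa_l\min\{a_i/a_{i_\tau},\kappa_\tau\}$; and every variable in the trailing blocks keeps its coefficient, since the running product $\prod_{l=1}^{t-1}\kappa'_l$ collapses back to $\prod_{l=1}^{t-1}\kappa_l$ through the unit factor. The only nontrivial check is the split-off variable $x_{j_\tau}$: in $\Pi$ its coefficient is $\prod_{l=1}^{\tau-1}\kappa_l\min\{a_{j_\tau}/a_{i_\tau},\kappa_\tau\}=\prod_{l=1}^{\tau}\kappa_l$ precisely because the hypothesis forces the minimum to equal $\kappa_\tau$, while in $\Pi'$ it is $\prod_{l=1}^{\tau}\kappa'_l\min\{a_{j_\tau}/a_{j_\tau},1\}=\prod_{l=1}^{\tau}\kappa_l$, and these agree. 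I expect the main obstacle to be purely bookkeeping: correctly re-indexing the $p+1$ blocks of $\Pi'$ and confirming that \eqref{paradef} genuinely reproduces the old $\beta$- and $\kappa$-values, since a single off-by-one in the block indices would corrupt every subsequent coefficient.
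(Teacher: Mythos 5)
Your proof is correct, and it is exactly the verification the paper leaves implicit in calling this an Observation: the key point that $a_{j_\tau} > \kappa_\tau a_{i_\tau} \geq \beta_\tau > 0$ forces the new singleton block to have $\kappa'_{\tau+1} = \lceil \beta_\tau / a_{j_\tau} \rceil = 1$ and hence $\mu'_{\tau+1} = 0$, so every other value of the recursion \eqref{paradef}, every coefficient, and the right-hand side are unchanged, while the hypothesis also makes the old coefficient of $x_{j_\tau}$ equal $\prod_{l=1}^{\tau}\kappa_l$. Your bookkeeping of the re-indexed blocks is accurate, so nothing is missing.
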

	If fixing the subset $ C \subseteq M $, by applying Observation \ref{findobservation} to the complemented partition inequality \eqref{comppartitionineq} recursively, we will obtain a new partition $ \Pi' $. Meanwhile, for the corresponding complemented partition inequality, \eqref{proeq1} is satisfied. By Proposition \ref{finnalpro}, the $ \leq $-partition inequality \eqref{wolseypartitionineq} defined by $ \Pi' $ and $ C $ is equivalent to the complemented partition inequality \eqref{comppartitionineq} defined by $ \Pi $ and $ C $. \rev{Hence, the number of complemented partition inequalities \eqref{comppartitionineq} is smaller than that of the $ \leq $-partition inequalities \eqref{wolseypartitionineq}.}

	\section{Conclusions and remarks}
	\label{section:conclusions}
	In this paper, we have presented the separation algorithms for the continuous knapsack polyhedra with divisible capacities.
	In particular, for the continuous $ \geq $-knapsack polyhedron, we have given a polynomial-time combinatorial separation algorithm for the exponential family of $ \geq $-partition inequalities. We have shown that by considering the $ m+1 $ subsets of $ M $, namely, $ T_i $, $ i=0,\ldots, m $ as defined in \eqref{Tdef}, a most violated inequality of \eqref{partitionineq} can be found. This reduces the problem of separating the inequalities \eqref{partitionineq} to the problem of separating the inequalities \eqref{intpartitionineq}, which can be solved using Algorithm \ref{intparseparalgo}. For the continuous $ \leq $-knapsack polyhedron, we have derived the family of complemented partition inequalities by the complemented $ \geq $-knapsack set. Moreover, we have proved that, together with the initial constraints, a complete description of the continuous $ \leq $-knapsack polyhedron is obtained. This in turn solves the separation problem of the continuous $ \leq $-knapsack polyhedron.
	
	There still exist some other polyhedra whose polyhedral structure is known but the associated combinatorial separation problem remains open, see, e.g., the continuous knapsack polyhedron with two integer variables and arbitrary coefficients of integer variables \cite{Dash2016} and a mixing polyhedron variant \cite{DiSumma2011}. Fixing the subset of continuous variables, it is easy to find the most violated inequality, see \citet{Dash2016} and \citet{DiSumma2011} for more details.
	Hence it deserves to apply the approach described in this paper to find the combinatorial separation problem of these polyhedra.

	\def\urlprefix{}\def\href#1#2#3#4{\ifstrequal{#2}{[link]}{}{#2\newline}}
	\bibliographystyle{spmpsci}
	\bibliography{divisibleset.bib}
	
	\end{document}